\DeclareMathOperator{\End}{End}
\DeclareMathOperator{\GL}{GL}
\DeclareMathOperator{\Hom}{Hom}
\DeclareMathOperator{\per}{per}
\newcommand{\cC}{\mathcal{C}}
\newcommand{\cD}{\mathcal{D}}
\newcommand{\cI}{\mathcal{I}}
\newcommand{\cQ}{\mathcal{Q}}
\newcommand{\cS}{\mathcal{S}}
\newcommand{\bF}{\mathbb{F}}
\newcommand{\bQ}{\mathbb{Q}}
\newcommand{\bZ}{\mathbb{Z}}
\newcommand{\eps}{\varepsilon}
\newcommand{\gL}{\Lambda}
\newcommand{\wt}{\widetilde}
\theoremstyle{plain}
\newtheorem{theorem}{Theorem}[section]
\newtheorem{prop}[theorem]{Proposition}
\newtheorem{lemma}[theorem]{Lemma}
\newtheorem{cor}[theorem]{Corollary}
\newtheorem{conj}[theorem]{Conjecture}
\theoremstyle{definition}
\newtheorem{dfn}[theorem]{Definition}
\newtheorem{remark}[theorem]{Remark}
\newtheorem{example}[theorem]{Example}
\newtheorem{notat}[theorem]{Notation}
\newtheorem{quest}[theorem]{Question}
\newtheorem{alg}[theorem]{Algorithm}
\numberwithin{equation}{section}
\begin{document}

\title{Towards derived equivalence classification of the
cluster-tilted algebras of Dynkin type D}

\author{Janine Bastian}
\address{Janine Bastian \newline
Leibniz Universit\"at Hannover, Institut f\"ur Algebra, Zahlentheorie
und Diskrete Mathematik, \newline
Welfengarten 1, 30167 Hannover, Germany}
\email{bastian@math.uni-hannover.de}

\author{Thorsten Holm}
\address{Thorsten Holm \newline
Leibniz Universit\"at Hannover, Institut f\"ur Algebra, Zahlentheorie
und Diskrete Mathematik, \newline
Welfengarten 1, 30167 Hannover, Germany}
\email{holm@math.uni-hannover.de}
\urladdr{http://www.iazd.uni-hannover.de/\~{}tholm}

\author{Sefi Ladkani}
\address{Sefi Ladkani \newline
Institut des Hautes \'{E}tudes Scientifiques, Le Bois Marie,
35, route de Chartres, 91440 Bures-sur-Yvette, France}
\curraddr{Mathematisches Institut der Universit\"at Bonn, Endenicher
Allee 60, 53115 Bonn, Germany}
\email{sefil@math.uni-bonn.de}
\urladdr{http://www.math.uni-bonn.de/people/sefil/}

\keywords{Cartan matrix, Cartan determinant, cluster tilted algebra,
cluster tilting object, derived category, derived equivalence, Dynkin
diagram, finite representation type, good mutation, quiver mutation,
tilting complex}

\thanks{{\em 2010 Mathematics Subject Classification.}
Primary: 16G10, 16E35, 18E30; Secondary: 05E99, 13F60, 16G60}

\thanks{{\em Acknowledgement. }This work has been carried out in
the framework of the research priority program SPP 1388
\emph{Representation Theory} of the Deutsche Forschungsgemeinschaft
(DFG). We gratefully acknowledge financial support through the grants
HO 1880/4-1 and LA 2732/1-1. S.\,Ladkani was also supported by a
European Postdoctoral Institute (EPDI) fellowship.}

\begin{abstract}
We provide a far reaching derived equivalence classification of the
cluster-tilted algebras of Dynkin type~D and suggest standard forms for
the derived equivalence classes. We believe that the classification is
complete, but some subtle questions remain open. We introduce another
notion of equivalence called \emph{good mutation} equivalence which is
slightly stronger than derived equivalence but is algorithmically more
tractable, and give a complete classification together with standard
forms.
\end{abstract}

\maketitle


\section*{Introduction}

Cluster categories have been introduced in~\cite{BMRRT} (see
also~\cite{CCS06a} for Dynkin type $A$) as a
representation-theoretic approach to Fomin and Zelevinsky's cluster
algebras without coefficients having skew-symmetric exchange
matrices (so that matrix mutation becomes the combinatorial recipe
of mutation of quivers). This approach allows to use deep algebraic
and representation-theoretic methods in the context of cluster
algebras. A crucial role is played by the cluster tilting objects in
the cluster category which model the clusters in the cluster
algebra. The endomorphism algebras of these cluster tilting objects
are called cluster-tilted algebras.

Cluster-tilted algebras are particularly well-understood if the
quiver underlying the cluster algebra, and hence the cluster
category, is of Dynkin type. Cluster-tilted algebras of Dynkin type
can be described as quivers with relations where the possible
quivers are precisely the quivers in the mutation class of the
Dynkin quiver, and the relations are uniquely determined by the
quiver in an explicit way~\cite{BMR_finite}. Moreover, the quivers
in the mutation classes of Dynkin quivers are explicitly known; for
type $A_n$ they can be found in~\cite{Buan-Vatne}, for type $D_n$
in~\cite{Vatne} and for type $E_{6,7,8}$ they can be enumerated
using a computer, for example by the Java
applet~\cite{Keller-software}.

However, despite knowing the cluster-tilted algebras of Dynkin type
as quivers with relations, many structural properties are not
understood yet. In particular, one would want to know when two
cluster-tilted algebras have equivalent derived categories. A
derived equivalence classification has been achieved so far for the
cluster-tilted algebras of Dynkin type $A_n$ by Buan and
Vatne~\cite{Buan-Vatne}, and for Dynkin type $E_{6,7,8}$ by the
authors~\cite{BHL09}. Moreover, a derived equivalence classification
has also been given by the first author for the cluster-tilted
algebras of extended Dynkin type $\tilde{A}_n$~\cite{Bastian}.

Cluster-tilted algebras of types $A_n$, $\tilde{A}_n$ and $D_n$ are
naturally associated to triangulations of marked surfaces. To an
(ideal) triangulation of a compact, connected, oriented Riemann
surface (possibly with boundary) with a set of marked points on it,
a quiver with potential has been associated in~\cite{Labardini},
linking the theory of cluster algebras associated to marked
surfaces~\cite{FST} with the theory of quivers with potentials and
their mutations initiated in~\cite{DWZ}. Any such triangulation
gives rise to an algebra by considering the Jacobian algebra of the
associated quiver with potential. In this way, the cluster-tilted
algebras of type $A_n$ arise from (triangulations of) a disc with
$n+3$ marked points on its boundary~\cite{CCS06a}, those of type
$\tilde{A}_n$ arise from an annulus with $n$ marked points on its
boundary, whereas those of type $D_n$ arise from a once-punctured
disc with $n$ marked points on its boundary~\cite{Schiffler}.

When the marked points lie entirely on the boundary (i.e.\ there are
no punctures), the associated Jacobian algebras are gentle and were
studied in~\cite{ABCP}. A derived equivalence classification of
these algebras has been presented by the third
author~\cite{Ladkani_gentle}, generalizing the aforementioned
classifications in types $A_n$ and $\tilde{A}_n$. On the other hand,
when the boundary of the surface is empty (so that all the marked
points are punctures), the corresponding Jacobian algebras are
symmetric~\cite{Ladkani_closed} and in a forthcoming paper by the
third author it will be shown that they are derived equivalent.

In the present paper we address the problem of derived equivalence
classification of the cluster-tilted algebras of Dynkin type $D_n$.
These algebras form the simplest instance of Jacobian algebras
arising from triangulations of a marked surface having punctures as
well as non-empty boundary. As one of our main results we obtain a
far reaching derived equivalence classification, see
Theorem~\ref{t:stdder}. This classification is complete for $D_n$
when $n \leq 14$, see Theorem~\ref{thm:D14}, but it will turn out to
be surprisingly subtle to distinguish certain of the cluster-tilted
algebras up to derived equivalence. Nevertheless we believe that our
classification is complete also for $n \geq 15$, see
Conjecture~\ref{conj:dereq} and the remark following it.

There are two natural approaches to address derived equivalence
classification problems of a given collection of algebras arising
from some combinatorial data. The \emph{top-down} approach is to
divide these algebras into equivalence classes according to some
invariants of derived equivalence, so that algebras belonging to
different classes are not derived equivalent. The \emph{bottom-up}
approach is to systematically construct, based on the combinatorial
data, tilting complexes yielding derived equivalences between pairs
of these algebras and then to arrange these algebras into groups
where any two algebras are related by a sequence of such derived
equivalences. To obtain a complete derived equivalence
classification one has to combine these approaches and hope that the
two resulting partitions of the entire collection of algebras
coincide.

For the bottom-up approach we use constructions that are based on
good mutations of quivers with potentials, which correspond to
particular kind of derived equivalences between their corresponding
Jacobian algebras. The notion of good mutation has been introduced
by the third author~\cite{Ladkani10} in relation with assessing the
derived equivalence of endomorphism algebras of neighboring
cluster-tilting objects in $2$-Calabi-Yau categories. We now explain
this notion in the more restrictive setup of cluster-tilted algebras
which is sufficient for the purposes of the current paper.

Since any two quivers in a mutation class are connected by a
sequence of mutations, it is natural to ask when a single mutation
of quivers is accompanied by derived equivalence of their
corresponding cluster-tilted algebras. The paper~\cite{Ladkani10}
presents a procedure to determine when two cluster-tilted algebras
whose quivers are related by a single mutation are also related by
Brenner-Butler (co-)tilting, which is a particular kind of derived
equivalence. We call such quiver mutation \emph{good mutation}. In
other words, a mutation at some vertex is good if the corresponding
Brenner-Butler tilting module is defined and moreover its
endomorphism algebra is isomorphic to the cluster-tilted algebra of
the mutated quiver. Obviously, the cluster-tilted algebras of
quivers connected by a sequence of good mutations (i.e.\ \emph{good
mutation equivalent}) are derived equivalent. The explicit knowledge
of the relations for cluster-tilted algebras of Dynkin type together
with the procedure in~\cite{Ladkani10} imply that for these algebras
there is an algorithm to decide if a mutation is good or not. By
utilizing this algorithmic approach we achieve a complete good
mutation equivalence classification of the cluster-tilted algebras
of Dynkin type $D_n$, see Theorem~\ref{t:stdgood}, which is another
main result of the present paper.

It turns out that in many interesting cases, including the
cluster-tilted algebras of Dynkin types, a single mutation at one
vertex is good if and only if the corresponding algebras are derived
equivalent, see Corollary~\ref{c:goodmut}. Hence the initial focus
on a particular kind of derived equivalence, motivated by
K-theoretic considerations, is actually not restrictive.

The top-down and the bottom-up approaches have been successfully
combined to give complete derived equivalence classifications for
the Jacobian algebras arising from a marked surface without
punctures or a marked surface without boundary, as well as for the
cluster-tilted algebras of Dynkin type $E_{6,7,8}$. Moreover, for
these algebras the notions of derived equivalence and good mutation
equivalence coincide (see for example Theorem~\ref{thm-good-typeA}
below for type $A_n$ and~\cite{BHL09} for type $E_{6,7,8}$), so that
any two derived equivalent algebras can be connected by a sequence
of good mutations.

However, for the cluster tilted algebras of Dynkin type $D_n$,
derived equivalence is strictly weaker than good mutation
equivalence which underlines and explains why the situation in this
case is much more complicated. The fact that there are
cluster-tilted algebras of Dynkin type $D_n$ which are derived
equivalent without being connected by a sequence of good mutations
occurs already for types $D_6$ and $D_8$, see
Examples~\ref{ex:selfinj} and~\ref{ex:D8}. Although we have been
able to find further systematic derived equivalences including what
we call `good double mutations', one cannot be sure that these are
all. Indeed, there are arbitrarily large sets of cluster-tilted
algebras of type $D$ such that all computable derived invariants
available to us coincide for all the algebras within such a set, but
nevertheless we could not determine whether any two of these
algebras are derived equivalent or not, see Section~\ref{sec:D15}.

\smallskip

The paper is organized as follows. In Section~\ref{sec:prelim} we
collect some preliminaries about invariants of derived equivalence,
mutations of algebras and fundamental properties of cluster-tilted
algebras, particularly of Dynkin types $A$ and $D$. These are needed
for the statements of our main results, which are given in
Section~\ref{sec:mainresults}. In particular, Theorem~\ref{t:stdder}
gives the far reaching derived equivalence classification of
cluster-tilted algebras of type $D$, and Theorem~\ref{t:stdgood} the
complete classification up to good mutation equivalence. Open
questions and some examples are also given in that section. In
Section~\ref{sec:goodmut} we determine all the good mutations for
cluster-tilted algebras of Dynkin types $A$ and $D$, whereas in
Section~\ref{sec:further-derived} we present further derived
equivalences between cluster-tilted algebras of type $D$ which are
not given by good mutations. Building on these results we provide in
Section~\ref{sec:algstdform}, which is purely combinatorial,
standard forms for derived equivalence as well as ones for good
mutation equivalence of cluster-tilted algebras of type $D$, thus
proving Theorem~\ref{t:stdder} and Theorem~\ref{t:stdgood}. We also
describe an explicit algorithm which decides on good mutation
equivalence. Finally, the appendix contains the proof of the
formulae for the determinants of the Cartan matrices of
cluster-tilted algebras of type $D$, as given in
Theorem~\ref{thm-det-typeD}. This invariant is used in the paper to
distinguish some cluster-tilted algebras up to derived equivalence.

\section{Preliminaries}
\label{sec:prelim}

\subsection{Derived equivalences and tilting complexes}

Throughout this paper let $K$ be an algebraically closed field. All
algebras are assumed to be finite-dimensional $K$-algebras. For a
$K$-algebra $A$, we denote the bounded derived category of right
$A$-modules by $\cD^b(A)$. Two algebras $A$ and $B$ are called
\emph{derived equivalent} if $\cD^b(A)$ and $\cD^b(B)$ are
equivalent as triangulated categories.

A famous theorem of Rickard~\cite{Rickard} characterizes derived
equivalence in terms of the so-called tilting complexes, which we now
recall. Denote by $\per A$ the full triangulated subcategory of
$\cD^b(A)$ consisting of the \emph{perfect} complexes of $A$-modules,
that is, complexes (quasi-isomorphic) to bounded complexes of finitely
generated projective $A$-modules.

\begin{dfn} \label{tilting-complex}
A \emph{tilting complex} $T$ over $A$ is a complex $T \in \per A$ with
the following two properties:
\begin{enumerate}
\renewcommand{\theenumi}{\roman{enumi}}
\item
It is \emph{exceptional}, i.e.\ $\Hom_{\cD^b(A)}(T,T[i])=0$ for all
$i\neq 0$, where $[1]$ denotes the shift functor in $\cD^b(A)$;
\item
It is a \emph{compact generator}, that is, the minimal triangulated
subcategory of $\per A$ containing $T$ and closed under taking direct
summands, equals $\per A$.
\end{enumerate}
\end{dfn}

\begin{theorem}[Rickard \cite{Rickard}] \label{t:Rickard}
Two algebras $A$ and $B$ are derived equivalent if and only if there
exists a tilting complex $T$ over $A$ such that $\End_{\cD^b(A)}(T)
\simeq B$.
\end{theorem}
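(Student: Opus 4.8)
The plan is to prove the two implications separately; the forward direction is routine, while the converse carries the content. For the \emph{only if} direction, suppose $F\colon\cD^b(B)\xrightarrow{\sim}\cD^b(A)$ is a triangle equivalence and put $T=F(B)$, with $B$ regarded as the rank-one free module placed in degree $0$. Each required property transports along $F$. Equivalences preserve compact objects and $B$ is compact (being perfect), so $T\in\per A$; exceptionality follows from $\Hom_{\cD^b(A)}(T,T[i])\cong\Hom_{\cD^b(B)}(B,B[i])$, which vanishes for $i\neq0$ since $B$ is projective; generation is preserved by any equivalence and $B$ generates $\per B$, so $T$ generates $\per A$; and finally $\End_{\cD^b(A)}(T)\cong\End_{\cD^b(B)}(B)\cong B$.

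For the \emph{if} direction I would pass to differential graded (dg) endomorphism algebras. First I would choose a model of $T$ (for instance an h-projective representative) whose graded endomorphisms compute the morphisms in $\cD^b(A)$, and let $\mathcal{E}$ denote the resulting dg algebra. Its cohomology is $H^n(\mathcal{E})\cong\Hom_{\cD^b(A)}(T,T[n])$, which by hypothesis (i) is concentrated in degree $0$, where it equals $\End_{\cD^b(A)}(T)\cong B$. Since a dg algebra whose cohomology is concentrated in a single degree is quasi-isomorphic to that cohomology viewed as an ordinary algebra, $\mathcal{E}$ is quasi-isomorphic to $B$.

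The decisive input is the dg tilting theorem: because $T$ is a compact generator of $\cD(A)$, the functor $\mathbf{R}\Hom_A(T,-)$ is a triangle equivalence $\cD(A)\xrightarrow{\sim}\cD(\mathcal{E})$ onto the derived category of dg $\mathcal{E}$-modules. Invariance of the derived category under quasi-isomorphism of dg algebras gives $\cD(\mathcal{E})\simeq\cD(B)$, whence $\cD(A)\simeq\cD(B)$ via an equivalence sending $T$ to $B$. This equivalence matches compact objects, hence restricts to $\per A\simeq\per B$. Since $A$ and $B$ are finite-dimensional, $\cD^b$ is carved out of $\cD$ intrinsically in terms of $\per$ --- as those objects $X$ for which $\bigoplus_i\Hom(P,X[i])$ is finite-dimensional for every $P\in\per$ and nonzero for only finitely many $i$ --- so the equivalence restricts further to $\cD^b(A)\simeq\cD^b(B)$.

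The main obstacle lies entirely in the converse: setting up the dg model of $\mathcal{E}$ correctly and establishing the tilting equivalence $\cD(A)\simeq\cD(\mathcal{E})$, together with checking that the passage from $\per$ to $\cD^b$ respects the required finiteness conditions. Rickard's original argument avoids dg language altogether and instead constructs the equivalence combinatorially, realizing complexes of projective $B$-modules inside $\per A$ through the additive closure $\add T$; that route replaces the dg input above with a delicate inductive construction of the quasi-inverse, which is where the real work then concentrates.
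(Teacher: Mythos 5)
The paper offers no proof of this statement: it is quoted as a black box from Rickard's paper, so there is no internal argument to compare against, and the relevant comparison is with Rickard's original proof. Your outline is correct, but it deliberately follows the later dg-algebra route (Keller's tilting theorem) rather than Rickard's. The easy direction is fine, with one small repair: in $\cD^b(B)$ there are no infinite coproducts, so ``compactness'' is vacuous and does not by itself show $T = F(B) \in \per A$; the standard fix is the intrinsic characterization of perfect complexes in $\cD^b$ as the homologically finite objects, i.e.\ those $X$ with $\Hom(X, Y[i]) = 0$ for every $Y$ and all $i \gg 0$, which any triangle equivalence visibly preserves. Your converse is the correct modern argument: the dg endomorphism algebra $\mathcal{E}$ of an h-projective model has cohomology $\Hom_{\cD^b(A)}(T,T[\ast])$ concentrated in degree $0$, the zigzag $\tau_{\leq 0}\mathcal{E} \to \mathcal{E}$, $\tau_{\leq 0}\mathcal{E} \to H^0(\mathcal{E}) \simeq B$ gives the quasi-isomorphism to $B$, Keller's theorem gives $\cD(A) \simeq \cD(\mathcal{E}) \simeq \cD(B)$ since $T$ is a compact generator (it generates $\per A$ as a thick subcategory, hence all of $\cD(\mathrm{Mod}\,A)$), and your finiteness condition (with $P = A$ it says bounded, finite-dimensional total cohomology) correctly carves $\cD^b$ out of $\cD$ in a way the equivalence respects. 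What the two routes buy: the dg proof outsources all the hard work to one general theorem and makes the reductions transparent, but requires the dg-categorical apparatus; Rickard's original argument is self-contained at the level of homotopy categories of projectives, extending the evident functor on $\add T$ to all of $\per A$ by an inductive construction on the length of complexes, at the cost of the delicate well-definedness and quasi-inverse checks you allude to. Either proof is acceptable here; for the purposes of this paper only the statement is used (via Theorem~\ref{t:Rickard} and the mutation constructions), so citing Rickard, as the paper does, is the intended reading.
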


Although Rickard's theorem gives us a criterion for derived
equivalence, it does not give a decision process nor a constructive
method to produce tilting complexes. Thus, given two algebras $A$ and
$B$ in concrete form, it is sometimes still unknown whether they are
derived equivalent or not, as we do not know how to construct a
suitable tilting complex or to prove the non-existence of such, see
Sections~\ref{sec:opp} and~\ref{sec:D15} for some concrete examples.

\subsection{Invariants of derived equivalence}
\label{sec:invariants}

Let $P_1, \dots, P_n$ be a complete collection of pairwise
non-isomorphic indecomposable projective $A$-modules
(finite-dimensional over $K$). The \emph{Cartan matrix} of $A$ is then
the $n \times n$ matrix $C_A$ defined by $(C_A)_{ij} = \dim_K
\Hom_A(P_j, P_i)$. An important invariant of derived equivalence is
given by the following well known proposition. For a proof see the
\emph{proof} of Proposition~1.5 in~\cite{Bocian}, and
also~\cite[Prop.~2.6]{BHL09}.

\begin{prop}
Let $A$ and $B$ be two finite-dimensional, derived equivalent algebras.
Then the matrices $C_A$ and $C_B$ represent equivalent bilinear forms
over $\bZ$, that is, there exists $P \in \GL_n(\bZ)$ such that $P C_A
P^T = C_B$, where $n$ denotes the number of
indecomposable projective modules of $A$ and $B$ (up to isomorphism).
\end{prop}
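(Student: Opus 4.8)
The plan is to show that a derived equivalence induces a suitable isomorphism on the Grothendieck groups of perfect complexes that transports one Cartan form to the other. Let $F\colon\cD^b(A)\to\cD^b(B)$ be a triangle equivalence. Since $F$ restricts to an equivalence $\per A\to\per B$ of the subcategories of perfect complexes (these are precisely the compact objects), it induces an isomorphism of Grothendieck groups $K_0(\per A)\xrightarrow{\sim}K_0(\per B)$. The first step is to identify $K_0(\per A)$ with $\bZ^n$ via the classes $[P_1],\dots,[P_n]$ of the indecomposable projectives; by the theorem of Rickard (Theorem~\ref{t:Rickard}) a derived equivalence is given by a tilting complex $T$ with $\End_{\cD^b(A)}(T)\simeq B$, and the indecomposable summands $T_1,\dots,T_n$ of $T$ form a basis of $K_0(\per A)$ whose images under $F$ are exactly the classes of the indecomposable projective $B$-modules. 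This gives an explicit change-of-basis matrix $P\in\GL_n(\bZ)$ relating the two projective bases.

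The second step is to express the Cartan matrix as the Gram matrix of a bilinear form defined on $K_0(\per A)$ by the Euler form. For $X,Y\in\per A$ one sets $\langle X,Y\rangle=\sum_{i\in\bZ}(-1)^i\dim_K\Hom_{\cD^b(A)}(X,Y[i])$, which is well defined and additive on triangles, hence descends to a bilinear form on $K_0(\per A)$. Evaluating on the projective basis gives $\langle P_j,P_i\rangle=\dim_K\Hom(P_j,P_i)=(C_A)_{ij}$, since the higher $\Hom$'s between projective modules vanish, so that $C_A$ is precisely the Gram matrix of this Euler form in the projective basis. The key point is that any triangle equivalence preserves the Euler form, because it preserves $\dim_K\Hom_{\cD^b}(-,-[i])$ for every $i$ and respects triangles and shifts.

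Combining these, let $P$ be the matrix whose rows express the classes $[FT_j]=[$projective $B$-modules$]$ in terms of the basis $[P_1],\dots,[P_n]$ of $K_0(\per B)$ coming from $B$ itself; then $P\in\GL_n(\bZ)$, and the compatibility of $F$ with the Euler form yields exactly $P\,C_A\,P^T=C_B$, with the transpose appearing because a Gram matrix transforms by $P\,G\,P^T$ under a change of basis given by $P$. The main obstacle, and the one requiring the most care, is bookkeeping the direction and the transpose conventions correctly: one must match the indexing convention $(C_A)_{ij}=\dim_K\Hom(P_j,P_i)$ with the way $F$ carries one projective basis to the other, so that the invariance of the Euler form translates into $P\,C_A\,P^T=C_B$ rather than its inverse or transpose. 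Once the bases and conventions are pinned down, the equality is a direct consequence of $F$ preserving all the $\Hom$-dimensions, and the result follows.
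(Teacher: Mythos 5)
Your proposal is correct and takes essentially the same route as the argument the paper refers to (the proof of Proposition~1.5 in the paper by Bocian--Skowro\'nski and Proposition~2.6 of the authors' type~$E$ paper): the Cartan matrix is the Gram matrix of the Euler form on $K_0(\per A)$ in the basis of indecomposable projectives, a triangle equivalence preserves this form, and the classes of the indecomposable summands of the tilting complex furnish the integral change of basis $P \in \GL_n(\bZ)$ with $P C_A P^T = C_B$. Your index bookkeeping with the convention $(C_A)_{ij} = \dim_K \Hom(P_j, P_i)$ also checks out, so there is nothing to add.
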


In general, to decide whether two integral bilinear forms are
equivalent is a very subtle arithmetical problem. Therefore, it is
useful to introduce somewhat weaker invariants that are computationally
easier to handle. In order to do this, assume further that $C_A$ is
invertible over $\bQ$. In this case one can consider the rational
matrix $S_A = C_A C_A^{-T}$ (here $C_A^{-T}$ denotes the inverse of the
transpose of $C_A$), known in the theory of non-symmetric bilinear
forms as the \emph{asymmetry} of $C_A$.

\begin{prop} \label{prop-asymmetry}
Let $A$ and $B$ be two finite-dimensional, derived equivalent algebras
with invertible (over $\bQ$) Cartan matrices. Then we have the
following assertions, each implied by the preceding one:
\begin{enumerate}
\renewcommand{\theenumi}{\alph{enumi}}
\item
There exists $P \in \GL_n(\bZ)$ such that $P C_A P^T = C_B$.

\item
There exists $P \in \GL_n(\bZ)$ such that $P S_A P^{-1} = S_B$.

\item
There exists $P \in \GL_n(\bQ)$ such that $P S_A P^{-1} = S_B$.

\item
The matrices $S_A$ and $S_B$ have the same characteristic polynomial.
\end{enumerate}
\end{prop}

For proofs and discussion, see for example \cite[Section~3.3]{Ladkani}.
Since the determinant of an integral bilinear form is also invariant
under equivalence, we obtain the following discrete invariant of
derived equivalence.

\begin{dfn}
For an algebra $A$ with invertible Cartan matrix $C_A$ over $\bQ$, we
define its \emph{associated polynomial} as $(\det C_A) \cdot
\chi_{S_A}(x)$, where $\chi_{S_A}(x)$ is the characteristic polynomial
of the asymmetry matrix $S_A = C_A C_A^{-T}$.
\end{dfn}

\begin{remark}
The matrix $S_A$ (or better, minus its transpose $-C_A^{-1} C_A^T$) is
related to the \emph{Coxeter transformation} which has been widely
studied in the case when $A$ has finite global dimension (so that $C_A$
is invertible over $\bZ$), see~\cite{Lenzing99}.
It is the $K$-theoretic shadow of the Serre
functor and the related Auslander-Reiten translation in the derived
category. The characteristic polynomial is then known as the
\emph{Coxeter polynomial} of the algebra.
\end{remark}

\begin{remark}
In general, $S_A$ might have non-integral entries. However, when the
algebra $A$ is \emph{Gorenstein}, the matrix $S_A$ is integral, which
is an incarnation of the fact that the injective modules have finite
projective resolutions. By a result of Keller and Reiten
\cite{Keller-Reiten}, this is the case for cluster-tilted algebras.
\end{remark}

\subsection{Mutations of algebras}
We recall the notion of mutations of algebras from~\cite{Ladkani10}.
These are local operations on an algebra $A$ producing new algebras derived
equivalent to $A$.

Let $A = KQ/I$ be an algebra given as a quiver with relations. For any
vertex $i$ of $Q$, there is a trivial path $e_i$ of length 0; the
corresponding indecomposable projective module $P_i=e_i A$ is spanned by the
images of the paths starting at $i$. Thus an arrow $i
\xrightarrow{\alpha} j$ gives rise to a map $P_j \to P_i$ given by left
multiplication with $\alpha$.

Let $k$ be a vertex of $Q$ without loops.
Consider the following two complexes of projective $A$-modules
\begin{align*}
T^-_k(A) = \bigl( P_k \xrightarrow{f} \bigoplus_{j \to k} P_j \bigr) \oplus
       \bigl( \bigoplus_{i \neq k} P_i \bigr) &, &
T^+_k(A) = \bigl( \bigoplus_{k \to j} P_j \xrightarrow{g} P_k \bigr) \oplus
       \bigl( \bigoplus_{i \neq k} P_i \bigr)
\end{align*}
where the map $f$ is induced by all the maps $P_k \to P_j$ corresponding to
the arrows $j \to k$ ending at $k$,
the map $g$ is induced by the maps $P_j \to P_k$ corresponding to the
arrows $k \to j$ starting at $k$, the term $P_k$ lies in degree $-1$ in
$T^-_k(A)$ and in degree $1$ in $T^+_k(A)$, and all other terms are in
degree $0$.

\begin{dfn}
Let $A$ be an algebra given as a quiver with relations and $k$ a vertex
without loops.
\begin{enumerate}
\renewcommand{\theenumi}{\alph{enumi}}
\item
We say that the negative mutation of $A$ at $k$ is \emph{defined} if
$T^-_k(A)$ is a tilting complex over $A$. In this case, we call the
algebra $\mu^-_k(A) = \End_{\cD^b(A)} T^-_k(A)$ the \emph{negative
mutation} of $A$ at the vertex $k$.

\item
We say that the positive mutation of $A$ at $k$ is \emph{defined} if
$T^+_k(A)$ is a tilting complex over $A$. In this case, we call the
algebra $\mu^+_k(A) = \End_{\cD^b(A)} T^+_k(A)$ the \emph{positive
mutation} of $A$ at the vertex $k$.
\end{enumerate}
\end{dfn}

\begin{remark}
By Rickard's Theorem~\ref{t:Rickard}, the negative and the positive mutations
of an algebra $A$ at a vertex, when defined, are always derived equivalent
to $A$.
\end{remark}

There is a combinatorial criterion to determine whether a mutation at a
vertex is defined, see~\cite[Prop.~2.3]{Ladkani10}. Since the algebras
we will be dealing with in this paper are schurian, we state here the
criterion only for this case, as it takes a particularly simple form.
Recall that an algebra is \emph{schurian} if the entries of its Cartan
matrix are only $0$ or $1$.

\begin{prop} \label{p:critmut}
Let $A$ be a schurian algebra.
\begin{enumerate}
\renewcommand{\theenumi}{\alph{enumi}}
\item
The negative mutation $\mu^-_k(A)$ is defined if and only if for any
non-zero path $k \rightsquigarrow i$ starting at $k$ and ending at some
vertex $i$, there exists an arrow $j \to k$ such that the composition
$j \to k \rightsquigarrow i$ is non-zero.

\item
The positive mutation $\mu^+_k(A)$ is defined if and only if for any
non-zero path $i \rightsquigarrow k$ starting at some vertex $i$ and
ending at $k$, there exists an arrow $k \to j$ such that the
composition $i \rightsquigarrow k \to j$ is non-zero.
\end{enumerate}
\end{prop}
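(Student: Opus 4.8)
The plan is to verify directly the two defining conditions of a tilting complex for $T^-_k(A)$ and deduce part (a); part (b) will then follow by duality. (One could instead simply specialize the general criterion \cite[Prop.~2.3]{Ladkani10}, but a direct computation makes the appearance of condition (a) transparent.) Since all terms of $T^-_k(A)$ are projective, I would compute every $\Hom_{\cD^b(A)}(-,-[m])$ as a space of homotopy classes of chain maps between complexes of projectives. Write $T_k=(P_k\xrightarrow{f}\bigoplus_{j\to k}P_j)$ for the single non-stalk summand (with $P_k$ in degree $-1$), so $T^-_k(A)=T_k\oplus\bigoplus_{i\neq k}P_i$; note that $T_k$ is precisely the mapping cone of $f$. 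As $k$ has no loops, every arrow $j\to k$ satisfies $j\neq k$, so the triangle $P_k\xrightarrow{f}\bigoplus_{j\to k}P_j\to T_k\to P_k[1]$ exhibits $P_k$ in the triangulated closure of $T_k$ together with the stalks $P_i$. Hence the generating condition~(\rn{2}) of Definition~\ref{tilting-complex} holds unconditionally, and the entire content of the statement lies in the exceptionality condition~(\rn{1}).

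For exceptionality I would first cut down, by a degree count, which graded pieces of $\Hom_{\cD^b(A)}(T^-_k(A),T^-_k(A)[m])$ can be nonzero for $m\neq0$. The stalk summands are projective and concentrated in degree $0$, so they contribute nothing outside degree $0$; and since $T_k$ is supported in degrees $-1,0$, the only cross terms surviving for $m\neq0$ are $\Hom(T_k,P_i[1])$ and $\Hom(P_i,T_k[-1])$ with $i\neq k$. The self-extensions $\Hom(T_k,T_k[m])$ vanish for $|m|\ge2$ by degree, and for $m=\pm1$ they reduce, upon applying $\Hom(T_k,-)$ or $\Hom(-,T_k[-1])$ to the defining triangle of the cone, to the two cross-term families just listed. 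Thus the whole computation collapses to understanding $\Hom(T_k,P_i[1])$ and $\Hom(P_i,T_k[-1])$ for $i\neq k$.

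The key step is to identify these two groups with explicit (co)kernels of maps induced by $f$ and then translate into path combinatorics using that $A$ is schurian. Writing out the chain maps and homotopies, $\Hom(P_i,T_k[-1])$ is the kernel of
\[
\Hom(P_i,P_k)\xrightarrow{\,f\circ-\,}\bigoplus_{j\to k}\Hom(P_i,P_j),
\]
while $\Hom(T_k,P_i[1])$ is the cokernel of $\Hom(\bigoplus_{j\to k}P_j,P_i)\xrightarrow{-\circ f}\Hom(P_k,P_i)$. Because $A$ is schurian, every space $\Hom(P_a,P_b)$ is at most one-dimensional and, when nonzero, is spanned by the residue class of a path; under this identification the $f$-induced map sends the class of a path to the class obtained by composing with an arrow $j\to k$. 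The cokernel term then vanishes automatically, since every nonzero path $i\rightsquigarrow k$ with $i\neq k$ has positive length and hence ends in an arrow $j\to k$. The kernel term, on the other hand, vanishes for all $i$ precisely when every nonzero path $k\rightsquigarrow i$ becomes nonzero after prepending a suitable arrow $j\to k$, i.e.\ when the composition $j\to k\rightsquigarrow i$ is nonzero for some such arrow — which is exactly condition~(a).

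Finally, part~(b) follows by applying part~(a) to the opposite algebra $A^{\mathrm{op}}$: the anti-equivalence $\cD^b(A)^{\mathrm{op}}\simeq\cD^b(A^{\mathrm{op}})$ carries projectives to projectives and sends $T^+_k(A)$ to $T^-_k(A^{\mathrm{op}})$ while reversing all arrows, so that condition~(b) for $A$ is condition~(a) for $A^{\mathrm{op}}$. I expect the main obstacle to be the orientation bookkeeping in the third paragraph: fixing consistent conventions for arrows and for the induced maps $P_\bullet\to P_\bullet$, correctly pinning down the homotopies to be quotiented out, and checking that the cokernel term and the two degree-$\pm1$ self-extensions genuinely impose no condition beyond~(a). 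This is exactly where the schurian hypothesis and the specific shape of the differential $f$ are essential, and where a single misidentified direction would silently alter the statement.
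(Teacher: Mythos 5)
Your verification is sound, but be aware that the paper contains no proof of Proposition~\ref{p:critmut} to compare against: it is quoted as the schurian specialization of the general combinatorial criterion of \cite[Prop.~2.3]{Ladkani10}. Your direct check of the two tilting axioms is therefore a genuinely self-contained alternative, and all the main steps are correct: the generating condition follows unconditionally from the cone triangle (with loop-freeness guaranteeing $j\neq k$, so the $P_j$ occur among the stalk summands); the degree count correctly isolates $\Hom(T_k,P_i[1])$ and $\Hom(P_i,T_k[-1])$; the identifications of these as $\operatorname{coker}(-\circ f)$ and $\ker(f\circ -)$ are what the homotopy computation gives (no homotopies exist in the kernel case since $(T_k[-1])$ has nothing in the degree of $P_i$ shifted down); the cokernel does vanish automatically, because every positive-length path ending at $k$ factors through its last arrow $j\to k$ (note this part does not even need schurianness); the long exact sequences reduce $\Hom(T_k,T_k[\pm 1])$ to the two cross-term families at $i=j$ a source of an arrow into $k$; and the anti-equivalence $\Hom_A(-,A):\per A \to \per A^{\mathrm{op}}$ indeed carries $T^+_k(A)$ to $T^-_k(A^{\mathrm{op}})$, so (b) follows from (a). What your route buys is transparency of the criterion in the schurian case, where each $\Hom(P_a,P_b)$ is spanned by a single path class so that injectivity of $f\circ -$ becomes a statement about one path at a time; what the citation route buys is the general criterion for arbitrary quivers with relations, of which the proposition is the specialization actually used throughout the paper.

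One detail you should add. Your computation produces the condition only for endpoints $i\neq k$: the cross terms involve the stalk summands $P_i$ with $i\neq k$, and the self-term reduction involves $P_j$ with $j\to k$, again $\neq k$ by loop-freeness. The proposition, however, quantifies over \emph{all} nonzero paths starting at $k$, including those ending at $k$ itself. The bridge is a one-line observation: in a schurian algebra with admissible relations, every positive-length path from $k$ to $k$ vanishes, since $\dim_K e_kAe_k \leq 1$ would force its class to be a nonzero scalar multiple of $e_k$, impossible for an element of the radical; hence the case $i=k$ imposes no condition and the two formulations agree. (Relatedly, ``path'' in (a) must be read as having positive length: including the trivial path $e_k$ would demand an incoming arrow at $k$, which the tilting condition does not require when $k$ also has no outgoing arrows --- a degenerate situation that never arises for the connected quivers considered in this paper, but worth pinning down in a self-contained proof.)
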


\begin{remark}
It follows from~\cite[Remark~2.10]{Ladkani10}, that in many cases, and
in particular when $A$ is schurian, the negative mutation of $A$ at $k$
is defined if and only if one can associate with $k$ the corresponding
Brenner-Butler tilting module. Moreover, in this case, $T^-_k(A)$ is
isomorphic in $\cD^b(A)$ to that Brenner-Butler tilting module.
\end{remark}

\subsection{Cluster-tilted algebras}

In this section we assume that all quivers are without loops and
$2$-cycles. Given such a quiver $Q$ and a vertex $k$, we denote by
$\mu_k(Q)$ the Fomin-Zelevinsky quiver mutation~\cite{FominZelevinsky02}
of $Q$ at $k$.
Two quivers are called \emph{mutation equivalent} if one can be reached
from the other by a finite sequence of quiver mutations. The
\emph{mutation class} of a quiver $Q$ is the set of all quivers which
are mutation equivalent to $Q$.

For a quiver $Q'$ without oriented cycles (i.e.\ an {\em acyclic} quiver),
the corresponding cluster
category $\cC_{Q'}$ was introduced in~\cite{BMRRT}. A \emph{cluster-tilted
algebra} of \emph{type $Q'$} is an endomorphism algebra of a cluster-tilting
object in $\cC_{Q'}$, see~\cite{BMR}.
It is known by~\cite{BMR} that for any quiver $Q$ mutation equivalent to
$Q'$, there is a cluster-tilted algebra whose quiver is $Q$. Moreover,
by~\cite{BIRSm}, it is unique up to isomorphism.
Hence, there is a bijection between the quivers in the mutation class of an
acyclic quiver $Q'$ and the isomorphism classes of cluster-tilted algebras
of type $Q'$. This justifies the following notation.

\begin{notat}
Throughout the paper, for a quiver $Q$ which is mutation equivalent to an
acyclic quiver,
we denote by $\gL_Q$ the corresponding cluster-tilted algebra and by $C_Q$
its Cartan matrix $C_{\gL_Q}$.
\end{notat}

When $Q'$ is a Dynkin quiver of types $A$, $D$ or $E$, the
corresponding cluster-tilted algebras are said to be of Dynkin type.
These algebras have been investigated in~\cite{BMR_finite}, where it is
shown that they are schurian and moreover they can be defined by using
only zero and commutativity relations that can be extracted from their
quivers in an algorithmic way.

\subsection{Good quiver mutations}

For cluster-tilted algebras of Dynkin type, the statement of
Theorem~5.3 in~\cite{Ladkani10}, linking more generally mutation of
cluster-tilting objects in 2-Calabi-Yau categories with mutations of
their endomorphism algebras, takes the following form.

\begin{prop} \label{p:goodmut}
Let $Q$ be mutation equivalent to a Dynkin quiver and let $k$ be a
vertex of $Q$.
\begin{enumerate}
\renewcommand{\theenumi}{\alph{enumi}}
\item
$\gL_{\mu_k(Q)} \simeq \mu^-_k(\gL_Q)$ if and only if the two algebra
mutations $\mu^-_k(\gL_Q)$ and $\mu^+_k(\gL_{\mu_k(Q)})$ are defined.

\item
$\gL_{\mu_k(Q)} \simeq \mu^+_k(\gL_Q)$ if and only if the two algebra
mutations $\mu^+_k(\gL_Q)$ and $\mu^-_k(\gL_{\mu_k(Q)})$ are defined.
\end{enumerate}
\end{prop}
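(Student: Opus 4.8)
The plan is to obtain both statements as the specialization to Dynkin type of the general correspondence \cite[Theorem~5.3]{Ladkani10} between Iyama--Yoshino mutation of cluster-tilting objects in a $2$-Calabi--Yau category and the algebra mutations $\mu^{\pm}_k$ of their endomorphism algebras. First I would set up the dictionary. The cluster category $\cC_{Q'}$ of the Dynkin quiver $Q'$ is Hom-finite and $2$-Calabi--Yau \cite{BMRRT}, and by \cite{BMR, BIRSm} the quiver $Q$ determines a cluster-tilting object $T$, unique up to isomorphism, with $\End_{\cC_{Q'}}(T) \simeq \gL_Q$; the vertices of $Q$ index the indecomposable summands of $T$. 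The compatibility I need to record is that Iyama--Yoshino mutation $\mu_k T$ at the summand indexed by $k$ induces exactly Fomin--Zelevinsky quiver mutation on Gabriel quivers \cite{BMR}, so that $\End_{\cC_{Q'}}(\mu_k T) \simeq \gL_{\mu_k(Q)}$. Once this is in place, statements (a) and (b) are precisely \cite[Theorem~5.3]{Ladkani10} read through this dictionary.

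Next I would verify that the hypotheses needed to form the mutations and to invoke Theorem~5.3 hold automatically here. The key structural fact is that every quiver in the mutation class of a Dynkin quiver is free of loops and $2$-cycles; in particular neither $Q$ nor $\mu_k(Q)$ has a loop at $k$. Loop-freeness at $k$ is exactly what is required for the two-term complexes $T^{\mp}_k(\gL_Q)$ and $T^{\pm}_k(\gL_{\mu_k(Q)})$ to be formed, and it is also the condition under which the exchange triangles of the Iyama--Yoshino mutation match these complexes, so that the general criterion of \cite[Theorem~5.3]{Ladkani10} collapses to the single pair of ``defined'' conditions stated in (a), respectively (b).

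It then remains to read off the two directions of each biconditional, treating (a) as the model and obtaining (b) by interchanging the roles of the positive and negative mutations. For the ``if'' direction, assuming both $\mu^-_k(\gL_Q)$ and $\mu^+_k(\gL_{\mu_k(Q)})$ are defined, $T^-_k(\gL_Q)$ is a genuine tilting complex, and Theorem~5.3 identifies its endomorphism algebra $\mu^-_k(\gL_Q) = \End_{\cD^b(\gL_Q)} T^-_k(\gL_Q)$ with $\End_{\cC_{Q'}}(\mu_k T) \simeq \gL_{\mu_k(Q)}$ by comparing it to the exchange triangle defining the mutated summand $T_k^*$. For the ``only if'' direction, the isomorphism $\gL_{\mu_k(Q)} \simeq \mu^-_k(\gL_Q)$ already forces $\mu^-_k(\gL_Q)$ to be defined, while the second condition reflects the mutual-inverse relationship between the positive and negative algebra mutations at a fixed vertex: applying $\mu^+_k$ to $\mu^-_k(\gL_Q) \simeq \gL_{\mu_k(Q)}$ recovers $\gL_Q$, consistently with $\mu_k$ being an involution on quivers, and in particular $\mu^+_k(\gL_{\mu_k(Q)})$ must then be defined.

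The step I expect to be the main obstacle is the matching, internal to Theorem~5.3, between the Iyama--Yoshino exchange triangle and the two-term tilting complex: one must check that the map $f$ in $T^-_k(\gL_Q)$ (respectively $g$ in the positive version) is the correct minimal approximation, so that its cone agrees with $T_k^*$ up to shift, and this is precisely the content governed by the ``defined'' hypotheses. In the Dynkin case this matching becomes combinatorially tractable because the algebras are schurian and their relations are the explicit zero and commutativity relations of~\cite{BMR_finite}; this is ultimately what makes good mutation equivalence decidable and is the reason the abstract criterion reduces to the checkable one of Proposition~\ref{p:critmut}.
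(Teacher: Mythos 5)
Your proposal follows exactly the route the paper itself takes: the paper offers no independent argument but presents Proposition~\ref{p:goodmut} as the specialization to Dynkin type of Theorem~5.3 of~\cite{Ladkani10}, using the same dictionary you set up (unique cluster-tilting object with endomorphism algebra $\gL_Q$ via~\cite{BMR,BIRSm}, Iyama--Yoshino mutation inducing Fomin--Zelevinsky mutation on quivers, loop-freeness guaranteeing the complexes $T^{\mp}_k$ can be formed). Your write-up is correct and simply makes explicit the details the paper leaves implicit in that citation.
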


This motivates the following definition.

\begin{dfn}
When (at least) one of the conditions in the proposition holds, we say
that the quiver mutation of $Q$ at $k$, is \emph{good}, since it
implies the derived equivalence of the corresponding cluster-tilted
algebras $\gL_Q$ and $\gL_{\mu_k(Q)}$. When none of the conditions in
the proposition hold, we say that the quiver mutation is \emph{bad}.
\end{dfn}

\begin{remark}
In view of Propositions~\ref{p:critmut} and~\ref{p:goodmut}, there is an
algorithm which decides, given a quiver which is mutation equivalent to a
Dynkin quiver, whether a mutation at a vertex is good or not.
\end{remark}

Whereas in Dynkin types $A$ and $E$, the quivers of any two derived
equivalent cluster-tilted algebras are connected by a sequence of good
mutations~\cite{BHL09}, this is no longer the case in type $D$.
Therefore, we need also to consider mutations of algebras going beyond
the family of cluster-tilted algebras (which is not closed under
derived equivalence).

\begin{dfn}
Let $Q$ and $Q'$ be quivers with vertices $k$ and $k'$ such that
$\mu_k(Q) = \mu_{k'}(Q')$. We call the sequence of the two mutations
from $Q$ to $Q'$ (first at $k$ and then at $k'$) a \emph{good double
mutation} if both algebra mutations $\mu^-_k(\gL_Q)$ and
$\mu^+_{k'}(\gL_{Q'})$ are defined and moreover, they are isomorphic to
each other.
\end{dfn}

By definition, for quivers $Q$ and $Q'$ related by a good double mutation,
the cluster-tilted algebras $\gL_Q$ and $\gL_{Q'}$ are derived equivalent.
Note, however, that we do not require the intermediate algebra
$\mu^-_k(\gL_Q) \simeq \mu^+_{k'}(\gL_{Q'})$ to be a cluster-tilted algebra.

\subsection{Cluster-tilted algebras of Dynkin types $A$ and $D$}
\label{sec:ctaAD}

In this section we recall the explicit description of cluster-tilted
algebras of Dynkin types $A$ and $D$, which are our main objects of study, as
quivers with relations.

Recall that the quiver $A_n$ is the following directed graph on $n
\geq 1$ vertices
\[
\xymatrix{ {\bullet_1} \ar[r] & {\bullet_2} \ar[r] & {\dots} \ar[r] &
{\bullet_n}}.
\]
The quivers which are mutation equivalent to $A_n$ have been explicitly
determined in~\cite{Buan-Vatne}. They can be characterized as follows.

\begin{dfn}
The \emph{neighborhood} of a vertex $x$ in a quiver $Q$ is the full
subquiver of $Q$ on the subset of vertices consisting of $x$ and the
vertices which are targets of arrows starting at $x$ or sources of
arrows ending at $x$.
\end{dfn}

\begin{prop}
Let $n \geq 2$. A quiver is mutation equivalent to $A_n$ if and only if
it has $n$ vertices, the neighborhood of each vertex is one of the nine
depicted in Figure~\ref{fig:ngbrA}, and there are no cycles in its
underlying graph apart from those induced by oriented cycles contained
in neighborhoods of vertices.
\end{prop}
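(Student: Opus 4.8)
The plan is to prove the two implications separately, treating the stated conditions as (a) the count of $n$ vertices, (b) the nine admissible neighborhoods of Figure~\ref{fig:ngbrA}, and (c) the absence of non-triangular cycles in the underlying graph. The linear orientation $A_n$ visibly satisfies all three, so the class defined by (a)--(c) is nonempty and contains $A_n$.

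For necessity I would argue that (a)--(c) are preserved under Fomin--Zelevinsky mutation. Mutation at a vertex $k$ only alters arrows incident to $k$ together with arrows between neighbors of $k$: it reverses the arrows at $k$ and, for each path $i\to k\to j$, creates or cancels an arrow $i\to j$. Hence the effect on the neighborhood of any vertex is local, and I would carry out the finite case analysis checking that mutating a quiver all of whose neighborhoods appear among the nine types again yields neighborhoods of those same types. Condition (c) requires slightly more care: the only new cycles a mutation can introduce pass through a newly created arrow $i\to j$, which together with the reversed arrows $j\to k$ and $k\to i$ forms an oriented triangle on $\{i,j,k\}$; that this never produces a longer chordless cycle is exactly what the neighborhood case analysis confirms. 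Since the vertex count is obviously preserved, the class satisfying (a)--(c) is closed under mutation, and as it contains $A_n$ it contains the whole mutation class.

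For sufficiency I would show conversely that every quiver $Q$ with properties (a)--(c) is mutation equivalent to $A_n$, by reduction to the acyclic case. If $Q$ is acyclic, then by (c) its underlying graph has no cycle at all and is therefore a forest; inspecting Figure~\ref{fig:ngbrA} one sees that in the absence of oriented cycles every vertex has at most two neighbors, so each connected component is a path. As $Q$ is connected, it is a single orientation of $A_n$, and since any two orientations of a path are linked by mutations at sources and sinks, such a $Q$ is mutation equivalent to the standard $A_n$. If instead $Q$ contains an oriented triangle, I would locate a vertex on such a triangle whose mutation strictly decreases the number of oriented $3$-cycles while keeping $Q$ inside the class (a)--(c), setting up an induction that terminates at the acyclic case.

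The main obstacle will be this last reduction step, since mutating at a vertex of one triangle can in principle create new triangles elsewhere. The crux is to choose the mutation vertex carefully---typically an \emph{outermost} vertex of a triangle that is not shared with any further cycle---and to use the neighborhood classification to guarantee that the flip removes one triangle without spawning others, together with verifying that such a vertex always exists given the global structure forced by (a)--(c). By comparison, the neighborhood case analysis underlying the necessity direction, though lengthy, is entirely routine.
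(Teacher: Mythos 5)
This proposition is not actually proved in the paper: it is quoted from Buan--Vatne~\cite{Buan-Vatne} (``The quivers which are mutation equivalent to $A_n$ have been explicitly determined in~\cite{Buan-Vatne}''), so there is no in-paper proof to compare against; the closest internal analogue is the proof of Proposition~\ref{p:goodmutA} via the case analysis of Table~\ref{t:mutA}. Your two-step plan --- closure of the class defined by (a)--(c) under mutation via a local neighborhood case check, then reduction of any member of the class to the linearly oriented $A_n$ by flipping triangles away --- is the standard route and matches the strategy of~\cite{Buan-Vatne}. Two small remarks: connectivity must be (and is implicitly) assumed in the statement, as you tacitly use when concluding that the acyclic case is a single path; and once the closure direction is established, \emph{every} mutation automatically keeps you inside the class, so in your induction the only thing to control is termination, not membership.

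The genuine gap is in the termination argument. You assert that one can always find a vertex on a triangle whose mutation strictly decreases the number of oriented $3$-cycles, locating it as an ``outermost'' triangle vertex, and you defer ``verifying that such a vertex always exists.'' Such a vertex need not exist. By the neighborhood classification, the only single mutation that destroys a triangle without creating another is the one at a triangle vertex of degree two (the right-to-left reading of row 2b in Table~\ref{t:mutA}); mutating at a triangle vertex that carries an extra arrow (the neighborhoods of types 7--9 in Figure~\ref{fig:ngbrA}) merely \emph{displaces} the triangle --- it destroys the old $3$-cycle but creates a new one through the attached neighbor, leaving $t(Q)$ unchanged. A triangle with pendant lines attached at all three of its vertices is a legitimate member of the class already for $n=6$, and no single mutation at any of its vertices lowers the triangle count. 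The repair is a secondary induction measure: use the triangle-displacing mutations to push a triangle along the tree-like global structure forced by (c) toward an extremity until one of its vertices becomes free (after such a displacement the newly created triangle acquires a degree-two vertex at the former pendant), and only then flip it away. This is exactly the style of argument used in this paper's proof of Proposition~\ref{p:goodmutA}, where subquivers and triangles are moved along paths by good mutations. With that amendment, and with the (finite but lengthy) neighborhood case check for the closure direction actually carried out, your outline becomes a complete proof.
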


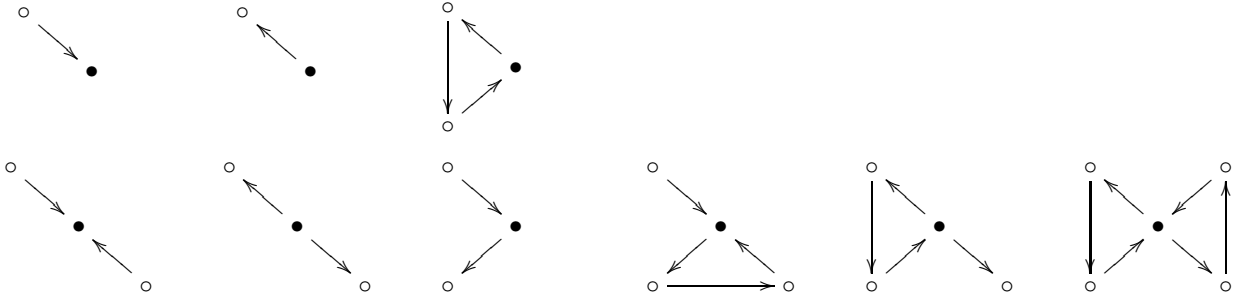
\begin{figure}
\begin{align*}
\begin{array}{c}
\xymatrix@R=1pc@C=1.2pc{
{\circ} \ar[dr] \\
& {\bullet} \\
& &
}
\end{array}
&&
\begin{array}{c}
\xymatrix@R=1pc@C=1.2pc{
{\circ} \\
& {\bullet} \ar[ul] \\
& &
}
\end{array}
&&
\begin{array}{c}
\xymatrix@R=1pc@C=1.2pc{
{\circ} \ar[dd] \\
& {\bullet} \ar[ul] \\
{\circ} \ar[ur] & &
}
\end{array}
\\
\begin{array}{c}
\xymatrix@R=1pc@C=1.2pc{
{\circ} \ar[dr] \\
& {\bullet} \\
& & {\circ} \ar[ul]
}
\end{array}
& &
\begin{array}{c}
\xymatrix@R=1pc@C=1.2pc{
{\circ} \\
& {\bullet} \ar[ul] \ar[dr] \\
& & {\circ}
}
\end{array}
& &
\begin{array}{c}
\xymatrix@R=1pc@C=1.2pc{
{\circ} \ar[dr] \\
& {\bullet} \ar[dl] \\
{\circ} & &
}
\end{array}
&&
\begin{array}{c}
\xymatrix@R=1pc@C=1.2pc{
{\circ} \ar[dr] \\
& {\bullet} \ar[dl] \\
{\circ} \ar[rr] & & {\circ} \ar[ul]
}
\end{array}
&&
\begin{array}{c}
\xymatrix@R=1pc@C=1.2pc{
{\circ} \ar[dd] \\
& {\bullet} \ar[ul] \ar[dr] \\
{\circ} \ar[ur] & & {\circ}
}
\end{array}
&&
\begin{array}{c}
\xymatrix@R=1pc@C=1.2pc{
{\circ} \ar[dd] & & {\circ} \ar[dl] \\
& {\bullet} \ar[ul] \ar[dr] \\
{\circ} \ar[ur] & & {\circ} \ar[uu]
}
\end{array}
\end{align*}
\caption{The 9 possible neighborhoods of a vertex $\bullet$ in a quiver
which is mutation equivalent to $A_n$, $n \geq 2$. The three at the top
row are the possible neighborhoods of a \emph{root} in a rooted quiver
of type $A$.} \label{fig:ngbrA}
\end{figure}

\begin{dfn}
Let $Q$ be a quiver mutation equivalent to $A_n$. A \emph{triangle} is
an oriented 3-cycle in $Q$, and a \emph{line} is an arrow in $Q$ which
is not part of a triangle. We denote by $s(Q)$ and $t(Q)$ the number of
lines and triangles in $Q$, respectively.
\end{dfn}

\begin{remark}
We have $n = 1 + s(Q) + 2t(Q)$.
\end{remark}

\begin{remark} \label{rem:relctaA}
Given a quiver $Q$ mutation equivalent to $A_n$, the \emph{relations}
defining the corresponding cluster-tilted algebra $\gL_Q$ (which has
$Q$ as its quiver) are obtained as follows~\cite{BMR_finite,CCS06a,CCS06b};
any triangle
\[
\xymatrix@=0.5pc{
& {\bullet} \ar[ddr]^{\beta} \\ \\
{\bullet} \ar[uur]^{\alpha} & & {\bullet} \ar[ll]^{\gamma}
}
\]
in $Q$ gives rise to three zero relations $\alpha \beta$, $\beta
\gamma$, $\gamma \alpha$, and there are no other relations.
\end{remark}

Recall that the quiver $D_n$ is the following quiver
\[
\xymatrix@=1pc{
{\bullet_1} \ar[rd] \\
& {\bullet_3} \ar[r] & {\dots} \ar[r] & {\bullet_n} \\
{\bullet_2} \ar[ru]
}
\]
on $n \geq 4$ vertices.  We now recall the description by
Vatne~\cite{Vatne} of the quivers which are mutation equivalent to
$D_n$, and the relations defining the corresponding cluster-tilted
algebras following~\cite{BMR_finite}. It will be most convenient to
use the language of gluing of rooted quivers.

\begin{dfn}
A \emph{rooted quiver of type $A$} is a pair $(Q,v)$ where $Q$ is a
quiver which is mutation equivalent to $A_n$ for some $n \geq 1$, and
$v$ is a vertex of $Q$ (the \emph{root}) whose neighborhood is one of
the three appearing in the first row of Figure~\ref{fig:ngbrA} if $n
\geq 2$.
\end{dfn}

By abuse of notation, we shall sometimes refer to such a rooted quiver
$(Q,v)$ just by $Q$.

\begin{dfn}
Let $Q_0$ be a quiver, called a \emph{skeleton}, and let $c_1, c_2,
\dots, c_k$ be $k \geq 0$ distinct vertices of $Q_0$. The \emph{gluing}
of $k$ rooted quivers of type $A$, say $(Q_1, v_1), (Q_2, v_2), \dots,
(Q_k, v_k)$, to $Q_0$ at the vertices $c_1, \dots, c_k$ is defined as
the quiver obtained from the disjoint union $Q_0 \sqcup Q_1 \sqcup
\dots \sqcup Q_k$ by identifying each vertex $c_i$ with the
corresponding root $v_i$, for $1 \leq i \leq k$.
\end{dfn}

\begin{remark} \label{rem:relglue}
Given relations (i.e.\ linear combinations of parallel paths) on the
skeleton $Q_0$, they induce relations on the gluing, namely by taking
the union of all the relations on $Q_0, Q_1, \dots, Q_k$, where the
relations on the rooted quivers of type $A$ are those stated in
Remark~\ref{rem:relctaA}.
\end{remark}

A cluster-tilted algebra of Dynkin type $D$ belongs to one of the
following four families, which are called \emph{types} and are defined
as gluing of rooted quivers of type $A$ to certain skeleta. Note that
in view of Remark~\ref{rem:relglue}, it is enough to specify the
relations on the skeleton. For each type, we define \emph{parameters}
which will be useful in the sequel when referring to the cluster-tilted
algebras of that type.

\subsubsection*{\underline{Type I}}
The gluing of a rooted quiver $Q'$ of type $A$ at the vertex $c$ of one
of the three skeleta
\begin{align*}
\xymatrix@=1.2pc{
{\bullet_a} \ar[dr] \\
& {\bullet_c} \ar[dl] \\
{\bullet_b}
}
& &
\xymatrix@=1.2pc{
{\bullet_a} \ar[dr] \\
& {\bullet_c}  \\
{\bullet_b} \ar[ur]
}
& &
\xymatrix@=1.2pc{
{\bullet_a} \\
& {\bullet_c} \ar[dl] \ar[ul] \\
{\bullet_b}
}
\end{align*}
as in the following picture:
\begin{center}
\includegraphics[scale=0.75]{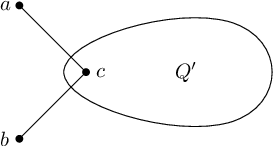}
\end{center}

The \emph{parameters} are $\bigl( s(Q'), t(Q') \bigr)$.

\subsubsection*{\underline{Type II}}
The gluing of two rooted quivers $Q'$ and $Q''$ of type $A$ at the
vertices $c'$ and $c''$, respectively, of the following skeleton
\[
\xymatrix@=1.5pc{
& {\bullet_b} \ar[dl]_\beta \\
{\bullet_{c''}} \ar[rr]^\eps & & {\bullet_{c'}} \ar[ul]_\alpha \ar[dl]^\gamma \\
& {\bullet_a} \ar[ul]^\delta}
\]
with the commutativity relation $\alpha \beta - \gamma \delta$ and the
zero relations $\eps \alpha$, $\eps \gamma$, $\beta \eps$, $\delta \eps$
as in the following picture:
\begin{center}
\includegraphics[scale=0.75]{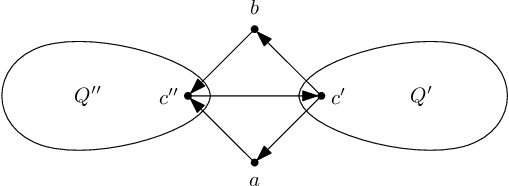}
\end{center}

The \emph{parameters} are $\bigl( s(Q'), t(Q'), s(Q''), t(Q'') \bigr)$.

\subsubsection*{\underline{Type III}}

The gluing of two rooted quivers $Q'$ and $Q''$ of type $A$ at the
vertices $c'$ and $c''$, respectively, of the following skeleton
\[
\xymatrix@=1.5pc{
& {\bullet_b} \ar[dl]_\beta \\
{\bullet_{c''}} \ar[dr]_\gamma & & {\bullet_{c'}} \ar[ul]_\alpha \\
& {\bullet_a} \ar[ur]_\delta}
\]
with the four zero relations $\alpha \beta \gamma$, $\beta \gamma \delta$,
$\gamma \delta \alpha$, $\delta \alpha \beta$, as in the following
picture:
\begin{center}
\includegraphics[scale=0.75]{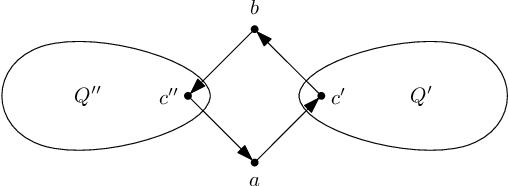}
\end{center}

As in Type II, the \emph{parameters} are $\bigl( s(Q'), t(Q'), s(Q''),
t(Q'') \bigr)$.

\subsubsection*{\underline{Type IV}}

The gluing of $r \geq 0$ rooted quivers $Q^{(1)}, \dots, Q^{(r)}$ of
type $A$ at the vertices $c_1, \dots, c_r$ of a skeleton
$Q(m, \{i_1, \dots, i_r\})$ defined below, see Figure~\ref{fig:ctaDIV}.

\begin{dfn}
Given integers $m \geq 3$, $r \geq 0$ and an increasing sequence $1
\leq i_1 < i_2 < \dots < i_r \leq m$, we define the following quiver
$Q(m, \{i_1, \dots, i_r\})$ with relations.
\begin{enumerate}
\renewcommand{\theenumi}{\alph{enumi}}
\item
$Q(m, \{i_1, \dots, i_r\})$ has $m+r$ vertices, labeled $1, 2, \dots,
m$ together with $c_1, c_2, \dots, c_r$, and its arrows are
\[
\bigl\{ i \to (i+1) \bigr\}_{1 \leq i \leq m} \cup \bigl\{ c_j \to i_j,
(i_j+1) \to c_j \bigr\}_{1 \leq j \leq r},
\]
where $i+1$ is considered modulo $m$, i.e.\ $1$, if $i = m$.

The full subquiver on the vertices $1, 2, \dots, m$ is thus an oriented
cycle of length $m$, called the \emph{central cycle}, and for every $1
\leq j \leq r$, the full subquiver on the vertices $i_j, i_j+1, c_j$ is
an oriented 3-cycle, called a \emph{spike}.

\item
The \emph{relations} on $Q(m, \{i_1, \dots, i_r\})$ are as follows:
\begin{itemize}
\item
The paths $i_j, i_j+1, c_j$ and $c_j, i_j, i_j+1$ are zero for all $1
\leq j \leq r$;

\item
For any $1 \leq j \leq r$, the path $i_j+1, c_j, i_j$ equals the path
$i_j+1, \dots, 1, \dots, i_j$ of length $m-1$ along the central cycle;

\item
For any $i \not \in \{i_1, \dots, i_r\}$, the path $i+1, \dots, 1,
\dots, i$ of length $m-1$ along the central cycle is zero.
\end{itemize}
\end{enumerate}
\end{dfn}

\begin{figure}[h]
\begin{center}
\includegraphics[scale=0.75]{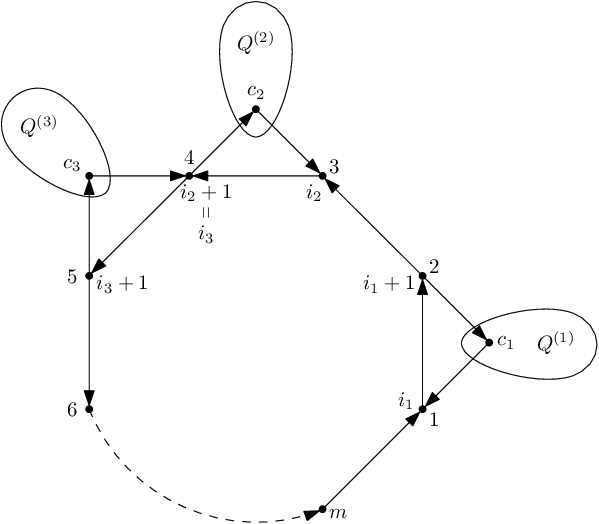}
\end{center}
\caption{A quiver of a cluster-tilted algebra of Type IV.}
\label{fig:ctaDIV}
\end{figure}

The \emph{parameters} are encoded as follows. If $r=0$, that is, there
are no spikes hence no attached rooted quivers of type $A$, the quiver
is just an oriented cycle, thus parameterized by its length $m \geq 3$.
In all other cases, due to rotational symmetry, we define the
\emph{distances} $d_1, d_2, \dots, d_r$ by
\[
d_1 = i_2 - i_1, d_2 = i_3 - i_2, \dots, d_r = i_1+m-i_r
\]
so that $m = d_1 + d_2 + \dots + d_r$, and encode the cluster-tilted
algebra by the sequence of triples
\begin{equation} \label{e:parDIV}
\bigl( (d_1, s_1, t_1), (d_2, s_2, t_2), \dots, (d_r, s_r, t_r) \bigr)
\end{equation}
where $s_j = s(Q^{(j)})$, $t_j = t(Q^{(j)})$ are the numbers of lines
and triangles of the rooted quiver $Q^{(j)}$ of type $A$ glued at the
vertex $c_j$ of the $j$-th spike.

\begin{remark}
Note that the cluster-tilted algebras in Type III can be viewed as a
degenerate version of Type IV, namely corresponding to the skeleton
$Q(2, \{1,1\})$ with central cycle of length $2$ (hence it is ``invisible'')
with all spikes present. It turns out that this point of view is
consistent with the constructions of good mutations and double
mutations as well as with the determinant computations presented later
in this paper. However, for simplicity, the proofs that we give for
Type III will not rely on this observation.
\end{remark}

\section{Main results}
\label{sec:mainresults}

In this section we describe the main results of the paper.

\subsection{Standard forms for derived equivalence}

We start by providing standard forms for derived equivalence. Since
rooted quivers of Dynkin type $A$ are important building blocks of the
quivers of cluster-tilted algebras of type $D$, we recall the results
on derived equivalence classification of cluster-tilted algebras of
type $A$, originally due to Buan and Vatne~\cite{Buan-Vatne}.

\begin{dfn}
Let $Q$ be a quiver of a cluster-tilted algebra of type $A$. The
\emph{standard form} of $Q$ is the following quiver consisting of
$s(Q)$ lines and $t(Q)$ triangles arranged as follows:
\begin{equation} \label{e:stdA}
\xymatrix@=0.3pc{
&& && && & {\bullet} \ar[ddl] && && {\bullet} \ar[ddl]
\\ \\
{\bullet_v} \ar[rr] & & {\bullet} \ar[rr] & & {\ldots} \ar[rr] & &
{\bullet} \ar[rr]  & & {\bullet} \ar[uul] & {\ldots} & {\bullet}
\ar[rr] & & {\bullet} \ar[uul]
}
\end{equation}
The \emph{standard form} of a rooted quiver $(Q,v)$ of type $A$ is a
rooted quiver of type $A$ as in~\eqref{e:stdA} consisting of $s(Q)$
lines and $t(Q)$ triangles with the vertex $v$ as the root.
\end{dfn}

The name ``standard form'' is justified by the next theorem which
follows from the results of~\cite{Buan-Vatne}, see also
Section~\ref{sec:goodmut}.

\begin{theorem}
\label{thm-good-typeA}
Let $Q$ be a quiver of a cluster-tilted algebra of Dynkin type $A$.
Then $Q$ can be transformed via a sequence of good mutations to its
standard form. Moreover, two standard forms are derived equivalent if
and only if they coincide.
\end{theorem}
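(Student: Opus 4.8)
The plan is to prove Theorem~\ref{thm-good-typeA} in two parts, corresponding to its two assertions. For the first assertion (reduction to standard form via good mutations), I would proceed by induction on the number $t(Q)$ of triangles, showing that one can successively ``disentangle'' the quiver until the triangles are lined up in the standard configuration of~\eqref{e:stdA}. The key observation is that, by the description in Remark~\ref{rem:relctaA}, the cluster-tilted algebra $\gL_Q$ of type $A$ is determined purely by the quiver, whose local structure is constrained to the nine neighborhoods in Figure~\ref{fig:ngbrA}. For a given vertex $k$ lying in one of these neighborhoods, I would use the combinatorial criterion of Proposition~\ref{p:critmut} together with Proposition~\ref{p:goodmut} to check that mutation at $k$ is good; the zero relations along triangles make the path conditions in Proposition~\ref{p:critmut} easy to verify case by case. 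The strategy is to find, at each stage, a vertex where a good mutation reduces some complexity measure (for instance, the total ``distance'' of the triangles from the root end of the line, or the number of vertices separating consecutive triangles), and to iterate until the standard form is reached.

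For the second assertion, I would first establish the forward direction: if two standard forms coincide as quivers, then their cluster-tilted algebras are isomorphic, hence trivially derived equivalent. The substantive content is the converse, namely that two distinct standard forms are \emph{not} derived equivalent. Here I would invoke the derived invariants collected in Section~\ref{sec:invariants}. Since the standard form is completely determined by the pair $\bigl(s(Q), t(Q)\bigr)$ (with $n = 1 + s(Q) + 2t(Q)$ fixing $n$), it suffices to show that these two numbers can be recovered from a derived invariant. The natural candidate is the associated polynomial, that is $(\det C_A)\cdot\chi_{S_A}(x)$, or more simply the Cartan determinant together with the characteristic polynomial of the asymmetry $S_A$; I would compute these for the standard form and verify that different values of $\bigl(s(Q), t(Q)\bigr)$ yield different invariants.

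Concretely, I would compute the Cartan matrix $C_A$ of the standard form directly from its quiver with relations: schurianness means every entry is $0$ or $1$, and the entry $(C_A)_{ij}$ records whether there is a nonzero path from $i$ to $j$. Because the standard form is a line with triangles attached in a uniform pattern, the Cartan matrix is block-structured, so both $\det C_A$ and $\chi_{S_A}(x)$ should admit closed formulas in $s(Q)$ and $t(Q)$. I expect each triangle to contribute a fixed factor (likely reflecting the three zero relations) and each line segment to contribute another, so that the invariant separates the contributions of lines from those of triangles. Verifying that the resulting invariant injectively encodes $\bigl(s(Q), t(Q)\bigr)$ completes the argument.

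The main obstacle, I anticipate, lies in the first assertion rather than the second: organizing the reduction to standard form so that one can always guarantee the \emph{existence} of a good mutation decreasing the chosen complexity measure, without getting stuck in a configuration where every available mutation is bad or fails to make progress. This requires a careful case analysis over the neighborhoods in Figure~\ref{fig:ngbrA} and a well-chosen monovariant. The computation of the derived invariant in the second part, while laborious, should be routine once the block structure of the Cartan matrix is made explicit; the only delicate point there is ensuring that $C_A$ is invertible over $\bQ$ so that the asymmetry $S_A$ is defined, which follows from the Gorenstein property of cluster-tilted algebras noted earlier in the excerpt.
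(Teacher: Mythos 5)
Your proposal follows essentially the same route as the paper: the paper likewise verifies goodness by a finite case analysis over the nine neighborhoods (Table~\ref{t:mutA}, whose upshot is that a mutation in type $A$ is good precisely when it preserves the triangle count), then reduces to standard form by an explicit sliding algorithm (the moves of \cite[Lemma 3.10]{Bastian}) in the proof of Proposition~\ref{p:goodmutA}, and distinguishes standard forms exactly via the invariants you name, $\det C_Q = 2^{t(Q)}$ (Proposition~\ref{prop-det-typeA}) together with the fixed number $n$ of simples, which already recovers $\bigl(s(Q),t(Q)\bigr)$. One small correction: invertibility of $C_Q$ over $\bQ$ follows from $\det C_Q = 2^{t(Q)} \neq 0$ itself, not from the Gorenstein property, which only yields integrality of the asymmetry once invertibility is known.
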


In Dynkin type $D$, we suggest the following standard forms.

\begin{theorem} \label{t:stdder}
A cluster-tilted algebra of type $D_n$ is derived
equivalent to one of the cluster-tilted algebras with the following
quivers, which we call ``standard forms'' for derived equivalence:
\begin{enumerate}
\renewcommand{\theenumi}{\alph{enumi}}
\renewcommand{\labelenumi}{(\theenumi)}
\item
$D_n$ (i.e.\ Type I with a linearly oriented $A_{n-2}$ quiver
attached);
\[
\xymatrix@=1pc{
{\bullet} \ar[dr] \\
& {\bullet} \ar[r] & {\dots} \ar[r] & {\bullet} \\
{\bullet} \ar[ur]
}
\]

\item
Type II as in the following figure, where $s,t \geq 0$ and $s+2t=n-4$;
\[
\xymatrix@=0.25pc{
&& {\bullet} \ar[ddll] &&
&& && & {\bullet} \ar[ddl] && && {\bullet} \ar[ddl]
\\ \\
{\bullet} \ar[rrrr] && &&
{\bullet} \ar[rr]^{1} \ar[uull] \ar[ddll]
& & {\ldots} \ar[rr]^{s} & &
{\bullet} \ar[rr]^{1}  & & {\bullet} \ar[uul] & {\ldots} & {\bullet}
\ar[rr]^{t} & & {\bullet} \ar[uul]
\\ \\
&& {\bullet} \ar[uull]
}
\]

\item
Type III as in the following figure, where $s,t \geq 0$ and $s+2t =
n-4$;
\[
\xymatrix@=0.25pc{
&& {\bullet} \ar[ddll] &&
&& && & {\bullet} \ar[ddl] && && {\bullet} \ar[ddl]
\\ \\
{\bullet} \ar[ddrr] && && {\bullet} \ar[rr]^{1} \ar[uull]
&& {\ldots} \ar[rr]^{s} &&
{\bullet} \ar[rr]^{1}  && {\bullet} \ar[uul] & {\ldots} & {\bullet}
\ar[rr]^{t} && {\bullet} \ar[uul]
\\ \\
&& {\bullet} \ar[uurr]
}
\]

\item [{($\mathrm{d_1}$)}]
(only when $n$ is odd)
Type IV with a central cycle of length $n$ without spikes,
as in the following picture:
\begin{center}
\includegraphics[scale=0.75]{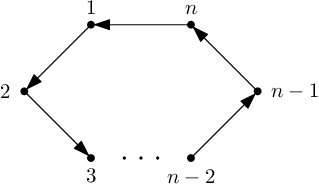}
\end{center}

\item [{($\mathrm{d_2}$)}]
Type IV with parameter sequence
$
\bigl( (1, s, t), (1, 0, 0), \dots, (1, 0, 0) \bigr)
$
of length $b \geq 3$, with $s, t \geq 0$ such that $n = 2b + s + 2t$,
and the attached rooted quiver of type $A$ is in standard form;
\begin{center}
\includegraphics[scale=0.75]{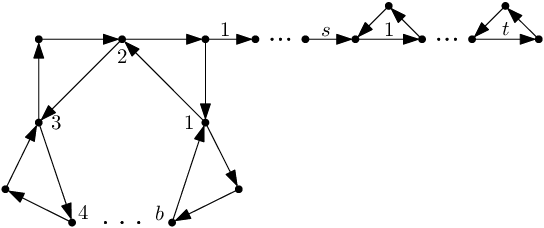}
\end{center}

\item [{($\mathrm{d_3}$)}]
Type IV with parameter sequence
$
\bigl( (1, 0, 0), (1, 0, 0), \dots, (1, 0, 0), (3, s_1, t_1), (3, s_2,
t_2), \dots, (3, s_k, t_k) \bigr)
$
for some $k > 0$, where the number of triples $(1, 0, 0)$ is $b
\geq 0$, the non-negative integers $s_1, t_1, \dots, s_k, t_k$ are
considered up to rotation of the sequence
$
\bigr((s_1, t_1), (s_2, t_2), \dots, (s_k, t_k) \bigr),
$
the attached rooted quivers of type $A$ are in standard form and
$n = 4k + 2b + s_1 + 2t_1 + \dots + s_k + 2t_k>4$.
\begin{center}
\includegraphics[scale=0.75]{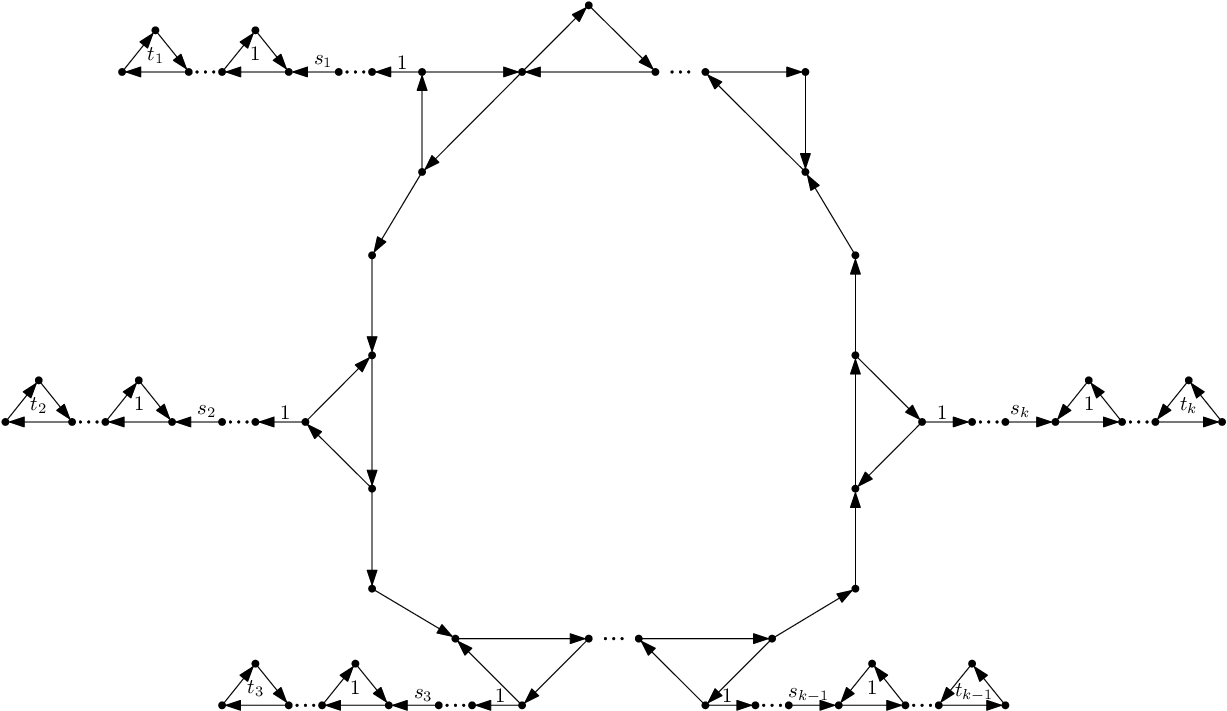}
\end{center}
\end{enumerate}

Moreover, any two distinct standard forms which are not of the class
($\mathrm{d_3}$) are not derived equivalent.
\end{theorem}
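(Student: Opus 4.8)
The statement splits into two parts: an \emph{existence} part asserting that every cluster-tilted algebra of type $D$ is derived equivalent to one of the quivers (a)--($\mathrm{d_3}$), and a \emph{separation} part (the ``moreover'') asserting that distinct standard forms outside class ($\mathrm{d_3}$) are pairwise non-derived equivalent. My plan is to handle the two parts by completely different means: the existence part by explicitly constructing derived equivalences via good mutations and good double mutations, and the separation part by computing derived invariants.

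For the existence part I would first invoke Vatne's classification, which guarantees that the quiver $Q$ of any cluster-tilted algebra of type $D$ is a gluing of rooted quivers of type $A$ onto one of the four skeleta of types I--IV. The reduction then proceeds in two layers. In the first layer I normalize the attached rooted quivers of type $A$: by Theorem~\ref{thm-good-typeA} each such quiver can be brought to standard form by good mutations, and I would prove a \emph{locality lemma} to the effect that a good mutation at an interior vertex of an attached type-$A$ quiver remains good after gluing. This follows because the good-mutation criteria of Propositions~\ref{p:critmut} and~\ref{p:goodmut} are phrased purely in terms of the neighborhood of the mutated vertex and of the non-zero paths through it, data which the gluing alters only at the root; hence these mutations may be carried out inside the larger type-$D$ algebra $\gL_Q$ without disturbing the skeleton. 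After this layer we may assume that all attached type-$A$ quivers are in standard form.

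The second, combinatorial layer reduces the skeleton, and here Type IV is the generic target into which the other types feed. For Type IV I would use good double mutations on the central cycle to rotate and reorder the spikes and, crucially, to modify the distance sequence $(d_1, \dots, d_r)$, showing that every such sequence can be transformed into one of the restricted patterns of ($\mathrm{d_2}$) (a single loaded spike at distance $1$ with all remaining distances equal to $1$) or ($\mathrm{d_3}$) (a block of distance-$1$ spikes followed by distance-$3$ spikes), the spikeless cycle yielding ($\mathrm{d_1}$) when $n$ is odd. For Types II and III I would standardize the two attached type-$A$ quivers to reach (b) and (c), while for Type I the key dichotomy is that when the attached quiver has no triangles the whole quiver is acyclic, so $\gL_Q$ is hereditary of type $D_n$ and equals (a), whereas when triangles are present one exhibits good (double) mutations relating $\gL_Q$ to a standard form of class (c) or ($\mathrm{d_3}$). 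I expect the control of the distance sequence in Type IV to be the main obstacle, since it requires careful bookkeeping of which double mutations are defined at each stage.

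For the separation part I would use the derived invariants from Subsection~\ref{sec:invariants}. The Cartan determinant, computed by Theorem~\ref{thm-det-typeD}, already separates many classes; for instance (a) is hereditary with determinant $1$, whereas the oriented cycles present in the other forms force a different determinant. To distinguish standard forms sharing the same determinant I would pass to the finer invariant of Proposition~\ref{prop-asymmetry}, namely the characteristic polynomial $\chi_{S_A}(x)$ of the asymmetry matrix $S_A = C_A C_A^{-T}$ (the Coxeter polynomial), and compute it as an explicit function of the parameters within each family (b), (c), ($\mathrm{d_1}$), ($\mathrm{d_2}$). Proving that distinct parameter values yield distinct pairs consisting of Cartan determinant and Coxeter polynomial then gives pairwise non-derived equivalence. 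The reason class ($\mathrm{d_3}$) is excluded is precisely that there its parameter sequences, considered up to rotation, can collide under these invariants, so the separation argument breaks down; this is the genuinely open subtlety foreshadowed in the introduction. The main difficulty in this part is purely computational, namely establishing injectivity of the invariant on the parameter space of each non-($\mathrm{d_3}$) family.
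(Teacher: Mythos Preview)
Your overall architecture matches the paper's: existence via sequences of good (double) mutations, separation via numerical invariants. There are, however, two concrete errors in the existence part and an unnecessary complication in the separation part.

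First, the routing for Type~I with triangles is wrong. You claim that Type~I with at least one triangle gets sent to class~(c) or~($\mathrm{d_3}$). In fact the subset of quivers of types~I and~II is \emph{closed} under good mutations (this is visible from the mutation tables: every good mutation out of a Type~I quiver lands in Type~II), and a Type~I quiver with parameters $(s,t+1)$ is good-mutation equivalent to a Type~II quiver with parameters $(s+1,t,0,0)$, i.e.\ to class~(b). So the correct dichotomy is: Type~I with no triangles $\to$ (a); Type~I with triangles $\to$ (b). Similarly, Types~III and~IV do not feed back into Types~I/II via good mutations. Second, you handle the spikeless cycle only when $n$ is odd. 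When $n=2m$ is even, the spikeless $2m$-cycle is self-injective but is not a standard form; one needs an \emph{ad hoc} tilting complex (not a good mutation, since none is defined at any vertex of a spikeless cycle) showing it is derived equivalent to the Type~IV quiver with parameter sequence $\bigl((1,0,0),\dots,(1,0,0)\bigr)$ of length $m$, landing in class~($\mathrm{d_2}$). This step is essential and is not covered by your double-mutation machinery.

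For the separation part, your plan to invoke the Coxeter polynomial is more than is needed. The paper isolates class~($\mathrm{d_1}$) by self-injectivity (Ringel's classification gives that the odd spikeless cycle is the unique self-injective cluster-tilted algebra with $n$ vertices when $n$ is odd, and self-injectivity is derived-invariant), and then the Cartan determinant \emph{alone} distinguishes the remaining classes (a), (b), (c), ($\mathrm{d_2}$): the determinants are $1$, $2^{t+1}$, $3\cdot 2^t$, and $(2b-1)\cdot 2^t$ with $b\geq 3$, so the odd parts $1,\,1,\,3,\,(2b-1)\geq 5$ separate the classes and the $2$-adic part recovers $t$ (whence also $s$ and, in ($\mathrm{d_2}$), $b$). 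Your Coxeter-polynomial route would also work, but the computations are heavier and unnecessary here; it is precisely in class~($\mathrm{d_3}$) that one is forced to look beyond the determinant, which is why that class is excluded from the ``moreover''.
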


\begin{remark}
There is no known example of two distinct standard forms which are
derived equivalent.
\end{remark}

\begin{remark}
Our proof of Theorem \ref{t:stdder} actually shows that any
cluster-tilted algebra of type $D$ which is not self-injective can
be brought to a standard form by a sequence of good mutations and
good double mutations. An algorithm to compute the standard form of
a cluster-tilted algebra of type $D$ given in parametric notation is
presented in Section \ref{sec:algder}.
\end{remark}

\begin{remark} \label{rem:stddblgood}
Moreover, it can be shown that any two distinct standard forms
cannot be connected by a sequence of good mutations or good double
mutations.
\end{remark}

The latter aspect leads us to formulate the following conjecture.

\begin{conj} \label{conj:dereq}
Let $\gL$ and $\gL'$ be two cluster-tilted algebras of Dynkin type
$D$ which are not self-injective. Then $\gL$ and $\gL'$ are derived
equivalent if and only if one can connect $\gL'$ to $\gL$ or to its
opposite algebra 
$\gL^{op}$ by a sequence of good mutations and good double
mutations.
\end{conj}

The smallest cases in which this conjecture could not be settled
occur in types $D_{17}$ and $D_{19}$, see Example~\ref{ex:D19} and
Example~\ref{ex:D17}.

\begin{remark}
If Conjecture~\ref{conj:dereq} holds, then by
Remark~\ref{rem:stddblgood} any two derived equivalent standard
forms $\gL$ and $\gL'$ satisfy $\gL' \simeq \gL$ or $\gL' \simeq
\gL^{op}$. Hence, Conjecture~\ref{conj:dereq} together with an
affirmative answer to Question~\ref{q:opp} would imply a complete
derived equivalence classification of the cluster-tilted algebras of
Dynkin type $D$.
\end{remark}

\subsection{Numerical invariants}

The main tool for distinguishing the various standard forms appearing
in Theorem~\ref{t:stdder} is the computation of their numerical
invariants of derived equivalence described in
Section~\ref{sec:invariants}. We start by giving the formulae for the
determinants of the Cartan matrices of all cluster-tilted algebras of
type $D$.

\begin{theorem} \label{thm-det-typeD}
Let $Q$ be a quiver which is mutation equivalent to $D_n$ for $n \geq 4$.
Using the notation from Section \ref{sec:ctaAD} we have the following formulae
for the determinants of the Cartan matrices.
\begin{enumerate}
\renewcommand{\theenumi}{\Roman{enumi}}
\item
If $Q$ is of Type I, then $\det C_Q = 2^{t(Q')} = \det C_{Q'}$.
\item
If $Q$ is of Type II, then $\det C_Q = 2 \cdot 2^{t(Q')+t(Q'')} = 2
\cdot \det C_{Q'} \cdot \det C_{Q''}$.
\item
If $Q$ is of Type III, then $\det C_Q = 3 \cdot 2^{t(Q')+t(Q'')} = 3
\cdot \det C_{Q'} \cdot \det C_{Q''}$.
\item
For a quiver $Q$ of Type IV with central cycle of length $m \ge 3$, let
$Q^{(1)}, \ldots, Q^{(r)}$ be the rooted quivers of type $A$ glued to
the spikes and let $c(Q)$ be the number of vertices on the central
cycle which are part of two (consecutive) spikes, i.e.\ $c(Q)
= |\{1 \leq j \leq r \,:\, d_j = 1\}|$, cf.~\eqref{e:parDIV}. Then
\[
\det C_Q = (m+c(Q)-1) \cdot \prod_{j=1}^r 2^{t(Q^{(j)})} = (m+c(Q)-1)
\cdot \prod_{j=1}^r \det C_{Q^{(j)}}.
\]
\end{enumerate}
\end{theorem}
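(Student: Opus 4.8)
The plan is to reduce the whole computation to the four ``skeleta'' by means of a multiplicativity lemma for the gluing operation, and then to evaluate the Cartan determinant of each skeleton separately. Throughout I use that all these algebras are schurian, so every entry of $C_Q$ is $0$ or $1$ and equals $1$ precisely when a suitable nonzero path exists in $\gL_Q$; the whole argument is therefore a combinatorial determinant computation governed by the existence of nonzero paths.

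The key step is the following gluing lemma: if $Q$ is obtained by gluing a rooted quiver $(Q',v)$ of type $A$ at a vertex $c$, and $Q_0$ denotes the quiver left after deleting the non-root vertices of $Q'$ (so $c=v$ survives as a single vertex), then
\[
\det C_Q = \det C_{Q_0}\cdot \det C_{Q'}.
\]
To prove this, let $V_1$ be the set of non-root vertices of $Q'$ and order the vertices of $Q$ with $V_1$ last. Since $c$ is a cut vertex separating $V_1$ from the rest of $Q$, any nonzero path between a vertex $x\in V_1$ and a vertex $y\notin V_1$ must pass through $c$. Because the relations on the gluing are merely the union of the relations on the pieces (Remark~\ref{rem:relglue}), no relation straddles $c$ using arrows from both sides, and one checks that such a composite path through $c$ is nonzero exactly when each of its two halves is; this yields the factorization $(C_Q)_{yx}=(C_Q)_{yc}(C_Q)_{cx}$ for $x\in V_1$, $y\notin V_1$. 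Subtracting $(C_Q)_{cx}$ times the column of $c$ from the column of each $x\in V_1$ then clears the block of $C_Q$ with rows outside $V_1$ and columns in $V_1$, so $C_Q$ becomes block lower triangular with diagonal blocks $C_{Q_0}$ and the matrix $M$ given by $M_{yx}=(C_Q)_{yx}-(C_Q)_{cx}(C_Q)_{yc}$ for $x,y\in V_1$. But $M$ is exactly the Schur complement of the diagonal entry at $c$ in the submatrix of $C_Q$ supported on $\{c\}\cup V_1$, and that submatrix is precisely $C_{Q'}$ (the portion of $Q$ inside $Q'$ is unaffected by the gluing). Hence $\det M=\det C_{Q'}$ and the lemma follows. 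Iterating it over all attached rooted quivers reduces each of the four cases to the determinant of the bare skeleton times the product $\prod_j\det C_{Q^{(j)}}$.

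This also disposes of the base case for type $A$: applying the lemma with $Q'$ the outermost line or triangle of the standard form~\eqref{e:stdA} gives a recursion $\det C_Q=\det C_{Q_0}\cdot(1\text{ or }2)$, since a single line has Cartan determinant $1$ and the oriented $3$-cycle with its three zero relations has Cartan determinant $2$; induction then yields $\det C_{Q'}=2^{t(Q')}$, so each attached piece contributes the factor $\prod_j 2^{t(Q^{(j)})}=\prod_j\det C_{Q^{(j)}}$. It remains to compute the skeleton determinants. For types I, II and III the bare skeleton has only $3$, $4$ and $4$ vertices respectively, and a direct evaluation gives the prefactors $1$, $2$ and $3$, matching the claimed formulae.

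The genuine obstacle is type IV, where the bare skeleton $Q(m,\{i_1,\dots,i_r\})$ retains the central $m$-cycle together with all spike triangles and their commutativity relations, and one must show that $\det C_{Q(m,\{i_1,\dots,i_r\})}=m+c(Q)-1$. Here the commutativity relation along each spike, identifying the short path through the apex with the long path around the cycle, is exactly what prevents a spike from contributing a factor $2$. The plan is to eliminate the $r$ spike apices $c_j$ by further Schur complements, reducing to an $m\times m$ matrix on the central cycle which is a circulant perturbed only at the spike positions: for $r=0$ the relation killing every length-$(m-1)$ path makes this matrix $J-P$ (all-ones minus the cyclic shift), whose determinant is $m-1$ by simultaneous diagonalization of these commuting circulants, and the perturbations coming from consecutive spikes (those with $d_j=1$, cf.~\eqref{e:parDIV}) account for the additional summand $c(Q)$. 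The hard part will be the bookkeeping of the zero- and commutativity-relations along the cycle and the control of these perturbations; I expect this to require an induction on $r$ (or on $m$) together with a case distinction according to whether adjacent cycle positions both carry spikes.
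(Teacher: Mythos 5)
Your gluing lemma and the reductions it affords are correct, and in fact slicker than what the paper does: the paper's Lemma~\ref{lemma-det-shrinking} peels off valency-one vertices and triangles one at a time (a factor $2$ per triangle), whereas your single Schur-complement argument at the cut vertex $c$ gives the full multiplicativity $\det C_Q = \det C_{Q_0}\cdot\det C_{Q'}$ at once; your skeleton evaluations for types I, II, III (prefactors $1,2,3$) and your circulant computation $\det(J-P)=m-1$ for the spike-free cycle agree with the paper's Lemmas~\ref{lemma-det-shrinking} and~\ref{lemma-det-no_spikes}. The path-factorization $(C_Q)_{yx}=(C_Q)_{yc}(C_Q)_{cx}$ does need the checks you indicate (detours into the glued piece die on the triangle zero-relations), but this is the same kind of path bookkeeping the paper itself uses.

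However, there is a genuine gap exactly where the theorem is hard: you do not prove that the type IV skeleton has Cartan determinant $m+c(Q)-1$; you only outline a plan and explicitly defer ``the bookkeeping of the zero- and commutativity-relations'' to an unspecified induction. Worse, the plan as stated is not quite right: eliminating the spike apices does \emph{not} reduce to a circulant ``perturbed only at the spike positions,'' because when two spikes are consecutive ($d_j=1$) the apex--apex entries of the Cartan matrix are themselves nonzero --- e.g.\ for spikes at $i_j$ and $i_j+1$ the path $c_{j+1}\to i_j+1\to c_j$ survives all relations --- so the apex block is not the identity, its inverse contributes extra correction terms to the Schur complement (already for $m=3$ with spikes at $1,2$ one gets $J-P+E_{3,1}$, not $J-P$ plus rank-one spike terms on the cycle), and for longer runs of consecutive spikes the apex block is bidiagonal and the corrections compound. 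The summand $c(Q)$ is precisely the net effect of these apex--apex paths, and establishing it is the bulk of the appendix: the paper proves Lemma~\ref{lemma-det-not_all_spikes} by a delicate induction on the number of spikes with explicit row operations and sign tracking, and must treat the all-spikes case ($c(Q)=r=m$, $\det C_Q=2m-1$) by a completely different device --- the derived equivalence with the $2m$-cycle from Lemma~\ref{l:selfinj} --- since the induction needs a spike-free vertex. Your proposal therefore proves parts (I)--(III) and reduces part (IV) to its skeleton, but the skeleton determinant itself, the substantive content of part (IV), remains unproven.
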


After uploading the first version of this paper we have been informed
by D.\ Vatne that he independently also computed these Cartan
determinants.

From Theorem~\ref{thm-det-typeD} we immediately obtain the
following.
\begin{cor} \label{cor:notdereq}
{\ }
\begin{enumerate}
\renewcommand{\theenumi}{\alph{enumi}}
\item
A cluster-tilted algebra in Type II is not derived equivalent to any
cluster-tilted algebra in Type III.
\item
A cluster-tilted algebra in Type II is not derived equivalent to any
cluster-tilted algebra in Type IV whose Cartan determinant is not a power of
$2$.
\end{enumerate}
\end{cor}

Note that the determinant alone is not enough to distinguish Types II
and IV, the smallest example occurs already in type $D_5$.
\begin{example}
The Cartan matrices of the cluster-tilted algebra in Type II with
parameters $(1,0,0,0)$ and the one in Type IV with parameters
$\bigl((3,1,0)\bigr)$ whose quivers are given by
\begin{align*}
\begin{array}{c}
\xymatrix@=1pc{
& {\bullet} \ar[dl] \\
{\bullet} \ar[rr] & & {\bullet} \ar[ul] \ar[dl] \ar[r]
& {\bullet} \\
& {\bullet} \ar[ul]
}
\end{array}
& &
\begin{array}{c}
\xymatrix@=1pc{
& {\bullet} \ar[dl] \ar[dr] \\
{\bullet} \ar[dr] & & {\bullet} \ar[dl] \ar[r] & {\bullet} \\
& {\bullet} \ar[uu]
}
\end{array}
\end{align*}
have both determinant $2$, but the characteristic polynomials of their
asymmetries differ, namely $x^5-x^3+x^2-1$ and $x^5-2x^3+2x^2-1$,
respectively.
\end{example}

Since the determinants of the Cartan matrices of all cluster-tilted
algebras of type $D$ do not vanish, one can consider their asymmetry
matrices and the corresponding characteristic polynomials. We list
below the characteristic polynomials of the asymmetry matrices for
cluster-tilted algebras of Types I, II and III of Dynkin type $D$
and for certain cases in Type IV. Combining this with
Theorem~\ref{thm-det-typeD}, we get the corresponding associated
polynomials. Using these it is possible to distinguish several further
standard forms of Theorem \ref{t:stdder} up to derived equivalence.
However, in the present paper we are not embarking on this aspect, and
therefore only list these polynomials for the sake of completeness. For
the proofs we refer to the thesis of the first
author~\cite{Bastian-thesis} as well as to the note~\cite{Ladkani_poly}
containing a general method for computing the associated polynomial of
an algebra obtained by gluing rooted quivers of type $A$ to a given
quiver with relations.

\begin{notat}
For a quiver $Q$ mutation equivalent to a Dynkin quiver, we denote
by $\chi_Q(x)$ the characteristic polynomial of the asymmetry matrix of
the Cartan matrix $C_Q$ of the cluster-tilted algebra corresponding to
$Q$.
\end{notat}

\begin{remark}
Let $Q$ be the quiver of a cluster-tilted algebra of type $A$. Then
\[
\chi_Q(x)= (x+1)^{t-1} \Bigl( x^{s+t+2} + (-1)^{s+1} \Bigr)
\]
where $s = s(Q)$ and $t = t(Q)$.
\end{remark}
\begin{remark} \label{rem:polyIV}
Consider a cluster-tilted algebra in type $D$ with quiver $Q$ of Type
I, II, III or IV and parameters as defined in Section~\ref{sec:ctaAD}.
\begin{enumerate}
\renewcommand{\theenumi}{\Roman{enumi}}
\item [(I)]
If $Q$ is of Type I, then
$
\chi_Q(x)= (x+1)^{t} (x-1) \Bigl(x^{s+t+2} + (-1)^{s}\Bigr)
$\\
where $s = s(Q')$ and $t = t(Q')$.

\item [(II/III)]
If $Q$ is of Type II or Type III, then
$
\chi_Q(x)= (x+1)^{t+1} (x-1) \Bigl(x^{s+t+2} + (-1)^{s+1}\Bigr)
$\\
where $s = s(Q') + s(Q'')$ and $t = t(Q') + t(Q'')$.

\item [(IV)]
If $Q$ is of Type IV, then we have the following.
\begin{enumerate}
\renewcommand{\theenumii}{\alph{enumii}}
\item
If $Q$ is an oriented cycle of length $n$ without spikes then
\[
\chi_Q(x) = \begin{cases} x^n - 1 & \text{if $n$ is odd,} \\
\left(x^{\frac{n}{2}} - 1\right)^2 & \text{if $n$ is even.}
\end{cases}
\]
\item
If $Q$ has parameters $\bigl((1,s,t), (1,0,0), \dots, (1,0,0) \bigr)$
with $b \geq 3$ spikes, then
\[
\chi_Q(x)= (x+1)^t (x^b-1) \Bigl(x^{s+t+b} + (-1)^{s+1}\Bigr).
\]
\item
If $Q$ has parameters $\bigl((3, s, t)\bigr)$
then
\[
\chi_Q(x) = (x+1)^{t-1} (x-1) \Bigl(x^{s+t+4} + 2 \cdot x^{s+t+3} +
(-1)^{s+1} \cdot 2x + (-1)^{s+1}\Bigr).
\]
\item \label{it:polyIV33}
If $Q$ has parameters $\bigl((3, s_1, t_1), (3, s_2, t_2)\bigr)$ then
\begin{align*}
\chi_Q(x) = (x+1)^{t_1+t_2-2} \cdot (x-1) \cdot
\Bigl(
& x^{s_1+t_1+s_2+t_2} \cdot (x^9 + 3x^8 + 4x^7 + 4x^6) \\
& + \bigl((-1)^{s_1} x^{s_2 + t_2} + (-1)^{s_2} x^{s_1+t_1} \bigr)
(x^5-x^4) \\
& + (-1)^{s_1+s_2+1}(1 + 3x + 4x^2 + 4x^3)
\Bigr) .
\end{align*}
\end{enumerate}
\end{enumerate}
\end{remark}

\begin{remark}
The third author has computed the Hochschild cohomology groups of
all the cluster-tilted algebras of Dynkin type~\cite{Ladkani12}. By
using these derived invariants it is possible to refine
Corollary~\ref{cor:notdereq} and show that a cluster-tilted algebra
in Type II with more than $4$ vertices is not derived equivalent to
any cluster-tilted algebra in Type IV, and that moreover a standard
form in the class ($\mathrm{d_3}$) is not derived equivalent to any
standard from in one of the classes (a), (b) or~(c). However, the
use of the Hochschild cohomology groups does not settle any of the
subtle questions raised in Section~\ref{sec:opp} and
Section~\ref{sec:D15}.
\end{remark}

\subsection{Complete derived equivalence classification up to $D_{14}$}
\label{sec:D14}

Fixing the number $n$ of vertices, it is possible to enumerate over all
the standard forms of derived equivalence with  $n$ vertices as given
in Theorem~\ref{t:stdder}, and compute the Cartan matrices of the
corresponding cluster-tilted algebras and their associated polynomials.
As long as the resulting polynomials (or any other derived invariants)
do not coincide for two distinct standard forms, the derived
equivalence classification is complete since then we know that any
cluster-tilted algebra in type $D$ is derived equivalent to one of the
standard forms, and moreover any two distinct such forms are not
derived equivalent.

By carrying out this procedure on a computer using the \textsc{Magma}
system~\cite{Magma}, we have been able to obtain a complete derived
equivalence classification of the cluster-tilted algebras of type $D_n$
for $n \leq 14$. Table~\ref{t:D14} lists, for $4 \leq n \leq 14$, the
number of such algebras (using the formula given
in~\cite{BuanTorkildsen09}) and the number of their derived equivalence
classes.

\begin{table}
\begin{center}
\begin{tabular}{r|r|r}
$n$ & Algebras & Classes \\
\hline 4 & 6 & 3 \\
5 & 26 & 5 \\
6 & 80 & 9 \\
7 & 246 & 10 \\
8 & 810 & 17 \\
9 & 2704 & 18 \\
10 & 9252 & 29 \\
11 & 32066 & 31 \\
12 & 112720 & 49 \\
13 & 400024 & 53 \\
14 & 1432860 & 81
\end{tabular}
\end{center}
\caption{The numbers of cluster-tilted algebras of type $D_n$ and their
derived equivalence classes, $n \leq 14$.} \label{t:D14}
\end{table}

As a consequence of our methods, we deduce the following
characterization of derived equivalence for cluster-tilted algebras of
type $D_n$ with $n \leq 14$.

\begin{theorem} \label{thm:D14}
Let $\gL$ and $\gL'$ be two cluster-tilted algebras of type $D_n$ with
$n \leq 14$. Then the following conditions are equivalent:
\begin{enumerate}
\renewcommand{\theenumi}{\alph{enumi}}
\item \label{it:D14:poly}
$\gL$ and $\gL'$ have the same associated polynomials;

\item \label{it:D14:Cartan}
The Cartan matrices of $\gL$ and $\gL'$ represent equivalent bilinear
forms over $\bZ$;

\item
$\gL$ and $\gL'$ are derived equivalent;

\item
Either $\gL$ and $\gL'$ are both self-injective, or none of them is
self-injective and they are connected by a sequence of good
mutations and good double mutations.
\end{enumerate}
\end{theorem}

\begin{remark}
The implications (d) $\Rightarrow$ (c) $\Rightarrow$ (b)
$\Rightarrow$ (a) always hold, regardless of the number of vertices.
The implication (a) $\Rightarrow$ (b) does not hold already in type
$D_{15}$, see Example~\ref{ex:D15} below. It is unknown whether the
implication (b) $\Rightarrow$ (c) holds in general for Dynkin type
$D$, see Example~\ref{ex:D15_op} in type $D_{15}$,
Example~\ref{ex:D19} in type $D_{19}$ as well as
Question~\ref{qu:derived}. Moreover, as the latter examples present
pairs of cluster-tilted algebras of type $D$ satisfying
condition~(b) but not~(d), it is impossible that both implications
(b) $\Rightarrow$ (c) and (c) $\Rightarrow$ (d) would hold in
general. Finally, note that Conjecture~\ref{conj:dereq} is a
slightly weaker version of the implication (c) $\Rightarrow$ (d).
\end{remark}

\begin{remark}
It follows from their derived equivalence classification
(see~\cite{Buan-Vatne} for type $A$ and~\cite{BHL09} for type $E$)
that a statement analogous to Theorem~\ref{thm:D14} is true also for
cluster-tilted algebras of Dynkin types $A$ and $E$, replacing the
condition (d) by:
\begin{enumerate}
\item [(d')]
\emph{$\gL$ and $\gL'$ are connected by a sequence of good mutations.}
\end{enumerate}
However, for Dynkin type $D$ the following two examples in types $D_6$
and $D_8$ demonstrate that one must replace condition (d') by the
weaker one (d). Thus, in some sense the derived equivalence
classification in type $D_8$ is more complicated than that in type
$E_8$.
\end{remark}

\begin{example}
\label{ex:selfinj} For any $b \geq 3$, the cluster-tilted algebra in
Type IV with a central cycle of length $2b$ without any spike is
derived equivalent to that in Type IV with parameter sequence
$\bigl((1,0,0), (1,0,0), \dots, (1,0,0)\bigr)$ of length $b$ (see
Lemma~\ref{l:selfinj}). There is no sequence of good mutations or
good double mutations connecting these two
self-injective~\cite{Ringel} algebras. Indeed, none of the algebra
mutations at any of the vertices is defined. The smallest such pair
occurs in type $D_6$ and the corresponding quivers are shown below.
\begin{align*}
\xymatrix@=0.4pc{
& {\bullet} \ar[ddl] & & {\bullet} \ar[ll] \\ \\
{\bullet} \ar[ddr] & & & & {\bullet} \ar[uul] \\ \\
& {\bullet} \ar[rr] & & {\bullet} \ar[uur]
}
& &
\xymatrix@=0.4pc{
{\bullet} \ar[rr] & & {\bullet} \ar[ddl] \ar[rr] & &
{\bullet} \ar[ddl] \\ \\
& {\bullet} \ar[uul] \ar[rr] & & {\bullet} \ar[uul] \ar[ddl] \\ \\
& & {\bullet} \ar[uul]
}
\end{align*}
\end{example}

\begin{example}
\label{ex:D8}
The two cluster-tilted algebras of type $D_8$ with quivers
\begin{align*}
\xymatrix@=0.4pc{
& {\bullet} \ar[ddl] & & {\bullet} \ar[ddl] & & {\bullet} \ar[ddl]
\\ \\
{\bullet} \ar[ddr] & & {\bullet} \ar[uul] \ar[rr]
& & {\bullet} \ar[uul] \ar[rr] & & {\bullet} \ar[uul] \\ \\
& {\bullet} \ar[uur]
}
& &
\xymatrix@=0.4pc{
& {\bullet} \ar[ddl] & & {\bullet} \ar[ddl] & & {\bullet} \ar[ddl]
\\ \\
{\bullet} \ar[rr] & & {\bullet} \ar[uul] \ar[ddr] & &
{\bullet} \ar[uul] \ar[rr] & & {\bullet} \ar[uul] \\ \\
& & & {\bullet} \ar[uur]
}
\end{align*}
of Type III are connected by a good double mutation and hence
derived equivalent (see Corollary~\ref{c:IIIdbl}), but they cannot
be connected by a sequence of good mutations. Indeed, the mutations
at all the vertices of the right quiver are bad (see cases II.1,
III.3 in Tables~\ref{t:mutII} and~\ref{t:mutIII} in
Section~\ref{sec:goodmut}).
\end{example}

\subsection{Opposite algebras}
\label{sec:opp}

The smallest example of two distinct standard forms (as in
Theorem~\ref{t:stdder}) for which it is unknown whether they are
derived equivalent or not arises for $n=15$ vertices. It is related to
the question of derived equivalence of an algebra and its opposite
which we briefly discuss below.

It follows from the description of the quivers and relations given in
Section~\ref{sec:ctaAD} that if $\gL$ is a cluster-tilted algebra of
Dynkin type $D$, then so is its opposite algebra $\gL^{op}$. Moreover,
a careful analysis of the classes given in Theorem~\ref{t:stdder} shows
that any cluster-tilted algebra with standard form in the classes
(a),(b),(c),($\mathrm{d_1}$) or ($\mathrm{d_2}$) is derived equivalent
to its opposite, and the opposite of a cluster-tilted algebra with
standard from in class ($\mathrm{d_3}$) and parameter sequence
\[
\bigl( (1, 0, 0), (1, 0, 0), \dots, (1, 0, 0), (3, s_1, t_1), (3, s_2,
t_2), \dots, (3, s_k, t_k) \bigr)
\]
is derived equivalent to a standard form in the same class, but with
parameter sequence
\[
\bigl( (1, 0, 0), (1, 0, 0), \dots, (1, 0, 0), (3, s_k, t_k), \dots,
(3, s_2, t_2), (3, s_1, t_1) \bigr)
\]
which may not be equivalent to the original one when $k \geq 3$. The
smallest such pairs of rotation-inequivalent standard forms occur when
$k=3$, and the number of vertices is then $15$.

The equivalence class of the integral bilinear form defined by the
Cartan matrix can be a very tricky derived invariant when it comes to
assessing the derived equivalence of an algebra $\gL$ and its opposite
$\gL^{op}$. Indeed, the Cartan matrix of $\gL^{op}$ is the transpose of
that of $\gL$. Since the bilinear forms defined by any square matrix
(over any field) and its transpose are always equivalent over that
field~\cite{DSZ03,Gow80,YipBallantine81}, it follows that the bilinear
forms defined by the integral matrices $C_{\gL}$ and $C_{\gL^{op}}$ are
equivalent over $\bQ$ as well as over all prime fields $\bF_p$. Hence
determining their equivalence over $\bZ$ becomes a delicate
arithmetical question. Moreover, it follows that the asymmetry matrices
(when defined) are similar over any field, and hence the associated
polynomials corresponding to $\gL$ and $\gL^{op}$ always coincide.

To illustrate these difficulties, we present here the two smallest
examples occurring in type $D_{15}$. In one example, the equivalence
of the bilinear forms is known, whereas in the other it is unknown.
In both cases, we are not able to tell whether the algebra is derived
equivalent to its opposite.

\begin{example} \label{ex:D15_op}
The Cartan matrices of the opposite cluster-tilted algebras of type
$D_{15}$ with standard forms
\begin{align*}
\bigl( (3, 1, 0), (3, 0, 1), (3, 0, 0) \bigr) &, & \bigl( (3, 0, 0),
(3, 0, 1), (3, 1, 0) \bigr)
\end{align*}
define equivalent bilinear forms over the integers. This has been
verified by computer search (using \textsc{Magma}~\cite{Magma}).
\end{example}

\begin{example}
For the opposite cluster-tilted algebras of type $D_{15}$ with standard
forms
\begin{align*}
\bigl( (3, 1, 0), (3, 2, 0), (3, 0, 0) \bigr) &, & \bigl( (3, 0, 0),
(3, 2, 0), (3, 1, 0) \bigr)
\end{align*}
it is unknown whether their Cartan matrices define equivalent bilinear
forms over $\bZ$.
\end{example}

The above discussion motivates the following question.
\begin{quest} \label{q:opp}
Let $Q$ be an acyclic quiver such that its path algebra $KQ$ is derived
equivalent to its opposite, and let $\gL$ be a cluster-tilted algebra
of type $Q$. Is it true that $\gL$ is derived equivalent to its
opposite $\gL^{op}$?
\end{quest}

\begin{remark}
The answer to the above question is affirmative for cluster-tilted
algebras of Dynkin types $A$, $E$ and affine type $\widetilde{A}$, as
well as for cluster-tilted algebras of type $D$ with at most $14$
simples. This follows from the corresponding derived equivalence
classifications.
\end{remark}

\begin{remark}
If the answer to the question is positive for cluster-tilted algebras
of Dynkin type $D$, then in class ($\mathrm{d_3}$) of
Theorem~\ref{t:stdder}, one has to consider the non-negative integers
$s_1, t_1, \dots, s_k, t_k$ in the $k$-term sequence
\[
\bigr((s_1, t_1), (s_2, t_2), \dots, (s_k, t_k) \bigr)
\]
up to rotation as well as order reversal.
\end{remark}

\subsection{Other open questions for $D_n$, $n \geq 15$}
\label{sec:D15}

An immediate consequence of part~(IV)(\ref{it:polyIV33}) of
Remark~\ref{rem:polyIV} is the following systematic construction of
standard forms with the same associated polynomial.

\begin{remark} \label{rem:collpoly}
Let $s_1, t_1, s_2, t_2 \geq 0$. Then the cluster-tilted algebras with
standard forms
\begin{align} \label{e:collpoly}
\bigl( (3, 2+s_1, t_1), (3, s_2, 2+t_2) \bigr) & & \bigl( (3, 2+s_2,
t_2), (3, s_1, 2+t_1) \bigr)
\end{align}
have the same associated polynomial.
In other words, for a cluster-tilted algebra with quiver
\[
\xymatrix@=0.3pc{
& & & & & & {\bullet} \ar[ddl] & & {\bullet} \ar[ll] \ar[rr]
& & {\bullet} \ar[ddl] \ar[rr] & & {\bullet} \ar[rr]
& & {Q'} \\ \\
& {\bullet} \ar[ddr] & & {\bullet} \ar[ddr] & & {\bullet} \ar[ddr]
& & & & {\bullet} \ar[uul] \\ \\
{Q''} \ar[uur] & & {\bullet} \ar[uur] \ar[ll] & &
{\bullet} \ar[uur] \ar[ll] & & {\bullet} \ar[ll] \ar[rr]
& & {\bullet} \ar[uur]
}
\]
where $Q'$ and $Q''$ are rooted quivers of type $A$, exchanging the
rooted quivers $Q'$ and $Q''$ does not change the associated polynomial.
\end{remark}

The following proposition, which is a specific case of a statement in
the note~\cite{Ladkani_euler}, shows that moreover, in most cases, the
Cartan matrices of the corresponding standard forms define equivalent
bilinear forms (over $\bZ$).

\begin{prop} \label{p:collCartan}
Let $s_1, s_2 \geq 0$ and $t_1, t_2 > 0$. Then the bilinear forms
defined by the Cartan matrices of the cluster-tilted algebras with
standard forms as in~\eqref{e:collpoly} are equivalent over $\bZ$. In
other words, for a cluster-tilted algebra with quiver
\[
\xymatrix@=0.3pc{
& & & & & & & & {\bullet} \ar[ddl] & & {\bullet} \ar[ll] \ar[rr]
& & {\bullet} \ar[ddl] \ar[rr] & & {\bullet} \ar[rr] \ar[ddl]
& & {\bullet} \ar[rr] & & {Q'} \\ \\
& {\bullet} \ar[ddr] & & {\bullet} \ar[ddr] & & {\bullet} \ar[ddr]
& & {\bullet} \ar[ddr] & & & & {\bullet} \ar[uul]
& & {\bullet} \ar[uul] \\ \\
{Q''} \ar[uur] & & {\bullet} \ar[uur] \ar[ll]
& & {\bullet} \ar[uur] \ar[ll] & & {\bullet} \ar[uur] \ar[ll]
& & {\bullet} \ar[ll] \ar[rr]
& & {\bullet} \ar[uur]
}
\]
where $Q'$ and $Q''$ are rooted quivers of type $A$, exchanging the
rooted quivers $Q'$ and $Q''$ does not change the equivalence class of
the bilinear form defined by the Cartan matrix.
\end{prop}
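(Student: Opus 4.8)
The plan is to exhibit an explicit unimodular matrix $P \in \GL_n(\bZ)$ with $P C P^T = C'$, where $C$ and $C'$ are the Cartan matrices of the two standard forms in~\eqref{e:collpoly}; note that no derived equivalence is claimed, which is precisely the point left open in Section~\ref{sec:D15}. The starting observation is that both quivers are gluings of the \emph{same} skeleton---the central cycle together with its bare spikes and the two roots $c'$ and $c''$---to the rooted quivers $Q'$ and $Q''$ of type $A$, the only difference being which numerical data $(s_i,t_i)$ sits at which root (with the fixed offsets). Since the algebras are schurian, each entry of $C$ merely records whether a nonzero path exists, and every path entering or leaving the body (the interior vertices) of a glued rooted quiver must factor through its root. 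Consequently the blocks of $C$ coupling a body to the remaining vertices have the rank structure of an outer product of a ``root vector'' with the internal path data of that body, and this is what will let me detach the bodies by integral congruence operations.

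First I would reduce, by elementary unimodular operations $C \mapsto E C E^T$ adding root-indexed rows and columns to body-indexed ones, the interior of each glued rooted quiver of type $A$ with parameters $(s,t)$ to an orthogonal summand $\langle 1\rangle^{\oplus s} \perp H^{\oplus t}$, where $H$ is a fixed rank-two integral form of determinant $2$ contributed by each triangle (reflecting the factor $2^{t}$ in $\det C$, cf.\ Theorem~\ref{thm-det-typeD}), while the root itself remains inside a core block $B$ supported on the skeleton. The crucial point is that this split-off summand depends only on $(s,t)$ and on the root neighbourhood, and that the skeleton, its relations, and the positions of $c'$ and $c''$ are identical for the two standard forms; the residual modification of the core produced by clearing the rank-one root coupling is likewise independent of the exchanged data. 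Hence after this reduction both Cartan forms acquire the shape $B \perp \langle 1\rangle^{\oplus N} \perp H^{\oplus M}$ with the \emph{same} core $B$.

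Now the exchange of $Q'$ and $Q''$ together with the offset reshuffling replaces the body data $(2+s_1,t_1),(s_2,2+t_2)$ by $(2+s_2,t_2),(s_1,2+t_1)$, and the total number of lines, $2+s_1+s_2$, and of triangles, $2+t_1+t_2$, are manifestly invariant under this replacement. Therefore the two reduced forms are literally the same orthogonal sum $B \perp \langle 1\rangle^{\oplus(2+s_1+s_2)} \perp H^{\oplus(2+t_1+t_2)}$, and composing the reduction for $C$ with the inverse of the reduction for $C'$ produces the desired $P \in \GL_n(\bZ)$. This is the integral refinement of the cancellation of signs and exponents (the $(-1)^2=1$ and the symmetric exponent shifts) already seen for the associated polynomials in Remark~\ref{rem:collpoly}.

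The hard part is the integral splitting of the second step: clearing the root--body cross terms over $\bQ$ is routine, but doing so over $\bZ$ requires a compensating lattice vector at the root, and an oriented $3$-cycle supplies exactly such a vector through its determinant-$2$ block $H$. This is why each of the four bodies occurring in the two standard forms must contain a triangle, which is precisely the hypothesis $t_1,t_2>0$ (the triangle counts of the four bodies being $t_1,\,2+t_2,\,t_2,\,2+t_1$, all positive iff $t_1>0$ and $t_2>0$); for triangle-free bodies the required vectors need not be integral, consistent with the fact that such equivalences become genuinely problematic in the related open cases of Section~\ref{sec:opp}. The cleanest way to organize the splitting and the resulting normal form is to invoke the general gluing formula for Cartan (Euler) forms of~\cite{Ladkani_euler}, of which the present statement is the special case of two rooted quivers glued to this central-cycle skeleton with the indicated offsets; the small-entry transformations found by computer in the examples of Section~\ref{sec:opp} are recovered from, and consistent with, this construction.
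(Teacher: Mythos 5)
Your argument fails at its central step, and the failure can be seen from invariants computed in this very paper. You claim that after integral congruences each attached rooted quiver of type $A$ with parameters $(s,t)$ splits off as $\langle 1\rangle^{\oplus s} \perp H^{\oplus t}$, leaving a core $B$ depending only on the skeleton, so that both forms become $B \perp \langle 1\rangle^{\oplus(2+s_1+s_2)} \perp H^{\oplus(2+t_1+t_2)}$. This is false, even over $\bQ$, so in particular your remark that ``clearing the root--body cross terms over $\bQ$ is routine'' is wrong: these are non-symmetric forms, and the characteristic polynomial of the asymmetry is a congruence invariant over any field (Proposition~\ref{prop-asymmetry}). Already for a single line the splitting is impossible: the Cartan form $\left(\begin{smallmatrix}1&1\\0&1\end{smallmatrix}\right)$ of $K(\bullet\to\bullet)$ has asymmetry with characteristic polynomial $x^2+x+1$, whereas $\langle 1\rangle^{\oplus 2}$ gives $(x-1)^2$. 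More decisively, your reduced shape would force the characteristic polynomial of the asymmetry to factor as $\chi_B\cdot(x-1)^{2+s_1+s_2}\cdot\chi_H^{2+t_1+t_2}$, depending only on the skeleton and the \emph{totals} of lines and triangles. But Remark~\ref{rem:polyIV}(IV)(\ref{it:polyIV33}) shows that for parameters $\bigl((3,a_1,b_1),(3,a_2,b_2)\bigr)$ the polynomial contains the term $\bigl((-1)^{a_1}x^{a_2+b_2}+(-1)^{a_2}x^{a_1+b_1}\bigr)(x^5-x^4)$ and hence depends on $a_1+b_1$ and $a_2+b_2$ separately; moreover, evaluating the long bracket at $x=1$ gives $12\bigl(1-(-1)^{a_1+a_2}\bigr)$, so when $a_1+a_2$ is odd the multiplicity of $(x-1)$ is exactly $1$, while your normal form forces multiplicity at least $2+s_1+s_2\geq 2$ (take, e.g., $s_1=1$, $s_2=0$, $t_1=t_2=1$ in~\eqref{e:collpoly} itself). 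Concretely, $\bigl((3,1,1),(3,0,3)\bigr)$ and $\bigl((3,0,1),(3,1,3)\bigr)$ have identical skeleta and totals, every attached quiver contains a triangle (so your integrality mechanism would apply), yet their associated polynomials differ, so their Cartan forms are inequivalent even over $\bQ$ --- while your argument would force them to be congruent over $\bZ$. Your reading of the hypothesis $t_1,t_2>0$ is likewise not the operative one: the paper's $D_{15}$ example $\bigl((3,2,0),(3,1,2)\bigr)$ versus $\bigl((3,3,0),(3,0,2)\bigr)$ is precisely the swap~\eqref{e:collpoly} with $t_1=t_2=0$, and there the forms are genuinely inequivalent (asymmetries not similar over $\bF_3$); any correct proof must therefore be sensitive to \emph{where} the triangles sit, which a totals-only decomposition structurally cannot be.

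For comparison with the paper: the paper gives no internal proof of this proposition at all --- it is stated as a specific case of a result in the note~\cite{Ladkani_euler}. So your closing appeal to the gluing formula of~\cite{Ladkani_euler} coincides with the paper's actual justification, but it is a citation, not an argument, and everything you build on top of it (the orthogonal normal form, the rank-one detachment of the bodies, the triangle-as-compensating-vector heuristic) is either unsubstantiated or, as above, provably incorrect. Even your preliminary structural claim that the coupling blocks are outer products through the root requires care, since a nonzero path into the root and a nonzero path out of it need not compose to a nonzero path when zero relations pass through the root.
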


\begin{quest} \label{qu:derived}
Are any two cluster-tilted algebras as in
Proposition~\ref{p:collCartan} derived equivalent?
\end{quest}

Let $m \geq 1$. By considering the $m+1$ standard forms with parameters
\begin{align*}
\bigl((3, 2i, 2m+1-2i), (3, 2m+1-2i, 1+2i)\bigr) & & 0 \leq i \leq m
\end{align*}
and invoking Proposition~\ref{p:collCartan}, we obtain the following.
\begin{cor}
Let $m \geq 1$. Then one can find $m+1$ distinct standard forms of
cluster-tilted algebras of type $D_{6m+13}$ whose Cartan matrices define
equivalent bilinear forms.
\end{cor}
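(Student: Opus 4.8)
The plan is to realize the $m+1$ standard forms explicitly and then link them in a chain using Proposition~\ref{p:collCartan}. For $0 \leq i \leq m$ consider the class $(\mathrm{d_3})$ standard form with two spikes
\[
P_i = \bigl( (3, 2i, 2m+1-2i), (3, 2m+1-2i, 1+2i) \bigr).
\]
First I would check these are legitimate: all four parameters $2i$, $2m+1-2i$, $2m+1-2i$, $1+2i$ are non-negative for $0 \leq i \leq m$, so each $P_i$ has exactly the shape demanded by class $(\mathrm{d_3})$ with $k=2$ and no $(1,0,0)$ triples. Substituting into the vertex-count formula $n = 4k + 2b + \sum_j (s_j + 2t_j)$ with $k=2$ and $b=0$, the $i$-dependent contributions cancel and one obtains $n = 6m+13$ for every $i$, as required.

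Next I would establish that the $P_i$ are pairwise distinct as standard forms. By the normalization in class $(\mathrm{d_3})$, two two-term parameter sequences define the same standard form precisely when they agree as ordered pairs or agree after swapping the two triples (the only non-trivial rotation when $k=2$). Suppose $P_i$ and $P_{i'}$ coincide. If they agree as ordered pairs then already $2i = 2i'$, so $i = i'$. If they agree only after a swap, then comparing the first triple of $P_i$ with the second triple of $P_{i'}$ forces $2i = 2m+1-2i'$, an equality of an even and an odd integer, which is impossible. Hence the $m+1$ forms are genuinely distinct; this parity obstruction is precisely what prevents the two spikes from being interchanged.

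The heart of the argument is to connect consecutive members via Proposition~\ref{p:collCartan}. Fix $i$ with $1 \leq i \leq m$ and put $s_1 = 2i-2$, $t_1 = 2m+1-2i$, $s_2 = 2m+1-2i$, $t_2 = 2i-1$ in~\eqref{e:collpoly}. One checks that $s_1, s_2 \geq 0$, while $t_1 = 2m+1-2i \geq 1$ and $t_2 = 2i-1 \geq 1$ are strictly positive exactly because $1 \leq i \leq m$, so the hypotheses of Proposition~\ref{p:collCartan} are met. With this choice the left-hand form of~\eqref{e:collpoly} becomes exactly $P_i$, whereas the right-hand form becomes $\bigl( (3, 2m+3-2i, 2i-1), (3, 2i-2, 2m+3-2i) \bigr)$, which is the swap of $P_{i-1}$ and hence the same standard form. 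Thus the proposition yields that the Cartan matrices of $P_i$ and $P_{i-1}$ define equivalent bilinear forms over $\bZ$ for every $1 \leq i \leq m$.

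Finally, since $\bZ$-equivalence of bilinear forms is an equivalence relation, transitivity along $P_0 \sim P_1 \sim \dots \sim P_m$ shows that all $m+1$ Cartan matrices define equivalent forms, which completes the proof. The main obstacle is the bookkeeping in the third step: one must match the parameters so that the proposition's output form is recognized as the swap (rotation) of the preceding member, and the strict positivity requirement $t_1, t_2 > 0$ is exactly what pins down the admissible index range $1 \leq i \leq m$.
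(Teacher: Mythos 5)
Your proposal is correct and takes essentially the same route as the paper: the paper obtains the corollary from exactly the same family $\bigl((3,2i,2m+1-2i),(3,2m+1-2i,1+2i)\bigr)$, $0 \leq i \leq m$, by invoking Proposition~\ref{p:collCartan}. Your write-up simply makes explicit the details the paper leaves implicit --- the substitution $s_1=2i-2$, $t_1=s_2=2m+1-2i$, $t_2=2i-1$ into~\eqref{e:collpoly}, the identification of the output with the rotation of $P_{i-1}$, the parity argument for distinctness, and the transitive chaining --- and all of these check out.
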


\begin{example} \label{ex:D19}
The smallest case occurs when $m=1$, giving a pair of
cluster-tilted algebras of type $D_{19}$  whose Cartan matrices define
equivalent bilinear forms but their derived equivalence is unknown.
They correspond to the choice of $Q'=A_1$ and $Q''=A_2$ in
Proposition~\ref{p:collCartan}.
\end{example}

In some of the remaining cases in Remark~\ref{rem:collpoly}, despite
the collision of the associated polynomials, it is still possible to
distinguish the standard forms by using the bilinear forms of their
Cartan matrices, whereas in other cases this remains unsettled. We
illustrate this by two examples.

\begin{example} \label{ex:D15}
The two standard forms $\bigl( (3, 2, 0), (3, 1, 2) \bigr)$ and $\bigl(
(3, 3, 0), (3, 0, 2) \bigr)$ in type $D_{15}$ corresponding to the
choice of $Q'=A_1$ and $Q''=A_2$ in Remark~\ref{rem:collpoly} have the
same associated polynomial, namely
\[
20 \cdot \left( x^{15} + 2 x^{14} + x^{13} - 4 x^{11} + x^9 - 3 x^8 +
3 x^7 - x^6 + 4 x^4 - x^2 - 2 x - 1 \right) \\
\]
but their asymmetry matrices are not similar over the finite field $\bF_3$
(and hence not over $\bZ$). Thus the bilinear forms defined by their Cartan
matrices are not equivalent so the two algebras are not derived equivalent.
\end{example}

This is the smallest non-trivial example of
Remark~\ref{rem:collpoly}, and it also shows that the implication
(\ref{it:D14:poly}) $\Rightarrow$ (\ref{it:D14:Cartan}) in
Theorem~\ref{thm:D14} does not hold in general for cluster-tilted
algebras of type $D$.

\begin{example} \label{ex:D17}
The two standard forms $\bigl((3,2,0),(3,3,2)\bigr)$ and
$\bigl((3,5,0),(3,0,2)\bigr)$ in type $D_{17}$ corresponding to the
choice of $Q'=A_1$ and $Q''=A_4$ in Remark~\ref{rem:collpoly} have the
same associated polynomials and the asymmetries are similar over $\bQ$
as well as over all finite fields $\bF_p$, but it is unknown whether
the bilinear forms are equivalent over $\bZ$ or not.
\end{example}

Table~\ref{t:D20} lists for $15 \leq n \leq 20$ the number of
cluster-tilted algebras of type $D_n$ (according to the formula
in~\cite{BuanTorkildsen09}) together with upper and lower bounds on the
number of their derived equivalence classes. There are two upper bounds
which are obtained by counting the number of standard forms given in
Theorem~\ref{t:stdder} with (``$\mathrm{max^{op}}$'') or without
(``max'') the assumption of affirmative answer to Question~\ref{q:opp}
concerning the derived equivalence of opposite cluster-tilted algebras.
The lower bound (``min'') is obtained by considering the number of
standard forms with distinct numerical invariants of derived
equivalence. It follows that in types $D_{15}$, $D_{16}$ and $D_{18}$
there are no unsettled cases apart from those arising as opposites of
algebras.

\begin{table}
\begin{center}
\begin{tabular}{r||r||r|r|r||}
& & \multicolumn{3}{|c||}{Classes} \\
\multicolumn{1}{c||}{$n$} & \multicolumn{1}{|c||}{Algebras}
& min & $\mathrm{max^{op}}$ & max \\
\hline
15 &    5170604 &  91 &  91 &  93 \\
16 &   18784170 & 136 & 136 & 139 \\
17 &   68635478 & 156 & 157 & 167 \\
18 &  252088496 & 231 & 231 & 248 \\
19 &  930138522 & 273 & 275 & 312 \\
20 & 3446167860 & 395 & 401 & 461
\end{tabular}
\end{center}
\caption{The number of cluster-tilted algebras of type $D_n$ together
with lower and upper bounds on the number of their derived equivalence
classes, $15 \leq n \leq 20$.} \label{t:D20}
\end{table}

\subsection{Good mutation and derived equivalence at one vertex}

Let $Q$ be mutation equivalent to a Dynkin quiver. When assessing
whether its quiver mutation at a vertex $k$ is good or not, one needs
to consider which of the algebra mutations at $k$ of the two
cluster-tilted algebras $\gL_Q$ and $\gL_{\mu_k(Q)}$ corresponding to
$Q$ and its mutation $\mu_k(Q)$ are defined.
\emph{A-priori}, there may be $16$ possibilities as there are four
algebra mutations to consider (negative and positive for $\gL_Q$ and
$\gL_{\mu_k(Q)}$) and each of them can be either defined or not.
However, our following result shows that the question which of the
algebra mutations of $\gL_Q$ at $k$ is defined and the analogous
question for $\gL_{\mu_k(Q)}$ are not independent of each other, and
the number of possibilities that can actually occur is only $5$.

\begin{prop}
Let $Q$ be mutation equivalent to a Dynkin quiver and let $k$ be a
vertex of $Q$. Consider the two algebra mutations $\mu^-_k(\gL_Q)$ and
$\mu^+_k(\gL_Q)$ of the cluster-tilted algebra $\gL_Q$.
\begin{enumerate}
\renewcommand{\theenumi}{\alph{enumi}}
\item
If none of these mutations is defined, then both algebra mutations
$\mu^-_k(\gL_{\mu_k(Q)})$ and $\mu^+_k(\gL_{\mu_k(Q)})$ are defined.
Obviously, the quiver mutation at $k$ is bad.

\item
If $\mu^-_k(\gL_Q)$ is defined but $\mu^+_k(\gL_Q)$ is not, then
$\mu^+_k(\gL_{\mu_k(Q)})$ is defined and $\mu^-_k(\gL_{\mu_k(Q)})$ is
not, hence the quiver mutation at $k$ is good.

\item
If $\mu^+_k(\gL_Q)$ is defined but $\mu^-_k(\gL_Q)$ is not, then
$\mu^-_k(\gL_{\mu_k(Q)})$ is defined and $\mu^+_k(\gL_{\mu_k(Q)})$ is
not, hence the quiver mutation at $k$ is good.

\item
If both algebra mutations of $\gL_Q$ at $k$ are defined, then either
both mutations of $\gL_{\mu_k(Q)}$ at $k$ are defined or none of them
is defined. Accordingly, the quiver mutation at $k$ may be good or bad.
\end{enumerate}

In other words, there are only $5$ possible cases which are given in
the table below, where $\surd$ indicates that the corresponding algebra
mutation is defined and $\times$ indicates that it is not.
\[
\begin{array}{|c|c|c|c|c|}
\mu^-_k(\gL_Q) & \mu^+_k(\gL_Q) & \mu^+(\gL_{\mu_k(Q)}) &
\mu^-_k(\gL_{\mu_k(Q)}) & \text{$\mu_k(Q)$ is \dots} \\
\hline
\times & \times & \surd & \surd  & \text{bad} \\
\surd  & \times & \surd & \times & \text{good} \\
\times & \surd  & \times & \surd & \text{good} \\
\surd  & \surd  & \surd  & \surd & \text{good} \\
\surd  & \surd & \times & \times & \text{bad} \\
\hline
\end{array}
\]
\end{prop}

Since a good mutation implies the derived equivalence of the
corresponding cluster-tilted algebras, it also implies that the
determinants of their Cartan matrices are equal. The next proposition
shows that in Dynkin types $A$ and $D$, the converse also holds.

\begin{prop}
Let $\gL_Q$ be a cluster-tilted algebra of Dynkin type $A$ or $D$. Then
the mutation of $Q$ at $k$ is good if and only if 
$\det C_Q = \det C_{\mu_k(Q)}$.
\end{prop}

This can be shown by checking all the cases discussed in
Section~\ref{sec:goodmut}.  Note that in type $E$, the statement of the
proposition does not hold, as can be verified by a computer. However,
the following weaker assertion is true for all Dynkin types including
type $E$, see~\cite[Corollary~1]{BHL09}, characterizing good mutations
as those mutations whose corresponding cluster-tilted algebras
are derived equivalent.

\begin{cor} \label{c:goodmut}
Let $\gL_Q$ be a cluster-tilted algebra of Dynkin type. Then the
mutation of $Q$ at $k$ is good if and only if $\gL_Q$ and
$\gL_{\mu_k(Q)}$ are derived equivalent.
\end{cor}

\subsection{Good mutation equivalence classification}

Whereas the classification according to derived equivalence becomes
subtle when the number of vertices grows, leaving some questions still
undecided, this does not happen for the stronger, but algorithmically
more tractable relation of good mutation equivalence.

\begin{dfn}
Two cluster-tilted algebras (of Dynkin type) with quivers $Q'$ and $Q''$
are called \emph{good mutation equivalent} if one can move from $Q'$ to
$Q''$ by performing a sequence of good mutations. In other words, there
exists a sequence of vertices $k_1, k_2, \dots, k_m$ such that if we set
$Q_0 = Q'$ and $Q_j = \mu_{k_j}(Q_{j-1})$ for $1 \leq j \leq m$, and denote
by $\gL_j$ the cluster-tilted algebra with quiver $Q_j$, then
$Q'' = Q_r$ and for any $1 \leq j \leq m$ we have
$\gL_j = \mu^-_{k_j}(\gL_{j-1})$ or $\gL_j = \mu^+_{k_j}(\gL_{j-1})$.
\end{dfn}

\begin{remark}
Any two cluster-tilted algebras which are good mutation equivalent are
also derived equivalent. The converse is true in Dynkin types $A$ and
$E$ but not in type $D$, see Section~\ref{sec:D14} above.
\end{remark}

We now present our results concerning good mutations in type $D$.

\begin{theorem} \label{t:alggood}
There is an algorithm which decides for two quivers which are mutation
equivalent to $D_n$ given in parametric notation (i.e.\ specified by
their Type I,II,III,IV and the parameters), whether the corresponding
cluster-tilted algebras are good mutation equivalent or not. 
\end{theorem}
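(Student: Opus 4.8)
The plan is to reduce the decision problem to the comparison of normal forms. By the good mutation equivalence classification (Theorem~\ref{t:stdgood}), every good mutation equivalence class of cluster-tilted algebras of type $D$ contains a distinguished normal form, and two such algebras are good mutation equivalent precisely when their normal forms agree. Hence the algorithm I would design has two stages: first, an effective normalization that rewrites the parametric data (type I/II/III/IV together with its parameters) of each input into the parametric data of its normal form using only good mutations; second, a comparison of the two resulting normal forms, which for type IV amounts to testing equality of two cyclic sequences of triples up to rotation. Correctness is then immediate from Theorem~\ref{t:stdgood}, so the entire content of the statement lies in making the normalization explicit and bounding its cost.

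For the normalization I would argue type by type. Types I, II and III carry only a bounded number of parameters (two, respectively four), so their normal forms are computed in constant time. The interesting case is type IV, where the input is a sequence $\bigl( (d_1, s_1, t_1), \dots, (d_r, s_r, t_r) \bigr)$. Here I would first observe that, by Theorem~\ref{thm-good-typeA}, each attached rooted quiver of type $A$ is good mutation equivalent to its standard form, which depends only on the pair $(s_j, t_j)$; thus the $A$-parts require no actual rewriting, being already determined by the parameters. It remains to treat the central cycle. From the explicit determination of all good mutations of type $D$ (the case tables of Section~\ref{sec:goodmut}) I would extract a finite list of \emph{elementary parametric moves}, each realizing a single good mutation or its inverse, describing how a good mutation at a vertex of or adjacent to the central cycle alters the sequence of triples (for instance, reducing a distance $d_j$, merging neighbouring spikes, or transferring lines and triangles between a spike and the cycle). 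The normalization repeatedly applies these moves until none is available, arriving at the canonical distance pattern prescribed by Theorem~\ref{t:stdgood}.

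For the complexity bound I would equip the set of parametric data with a well-founded measure, such as $\sum_j d_j$ together with a lexicographic refinement on the triples, and show that each elementary move strictly decreases it; this guarantees termination and bounds the number of moves by $O(r)$. Since each move rewrites a bounded window of the sequence and shifts the remaining entries in $O(r)$ time --- and crucially is performed symbolically on the parameters rather than by literally mutating a quiver, so that a single move can collapse a distance $d_j$ of arbitrary size in one step --- the normalization runs in $O(r^2)$ time. The final rotation-equality test costs $O(k^2)$ for sequences of length $k \le r$, so the overall running time is quadratic in the number of parameters, as claimed.

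The step I expect to be the main obstacle is proving that the extracted list of elementary moves is both \emph{complete} and \emph{confluent}: complete, in that it generates exactly the good mutation equivalence relation within each type, so that no good-mutation-equivalent pair is separated by the procedure; and confluent, so that the terminal form is independent of the order in which moves are applied and hence really is the normal form of Theorem~\ref{t:stdgood}. Establishing this rests on the exhaustive case analysis of good and bad mutations in Section~\ref{sec:goodmut} and on careful bookkeeping of how each good mutation acts on the triples and on the rotational symmetry of the central cycle. A related subtlety to be dispatched is that the type (I, II, III or IV) is preserved by good mutations except at the degenerate boundary where type III is viewed as $Q(2,\{1,1\})$, so that the ``type'' can be treated as an invariant on which the algorithm may branch at the outset.
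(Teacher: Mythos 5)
Your high-level skeleton agrees with the paper's: both reduce the decision problem to the classification of Theorem~\ref{t:stdgood} and finish with a cyclic-equivalence test on the resulting parameters, which is where the quadratic bound comes from. But your normalization stage has two concrete problems. First, your closing claim that the type I/II/III/IV is preserved by good mutations (up to the degenerate type III boundary) is false: moves I.5a and I.5b in Table~\ref{t:mutgood} take a type I quiver to a type II quiver, and moves III.1 and III.2 take type III to type IV. Only the coarser partition into $\{$I, II$\}$, $\{$III, IV with spikes$\}$ and the isolated spikeless cycles (class $(\mathrm{d}_1)$) is invariant (Lemma~\ref{l:IImutgood} and the surrounding discussion), so an algorithm that branches on the exact type at the outset and never crosses these boundaries will separate good-mutation-equivalent pairs. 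Second, your termination argument cannot work as stated: you propose a well-founded measure that \emph{every} elementary move strictly decreases, yet your move list explicitly contains each good mutation ``or its inverse,'' and the essential manoeuvres are composed of mutually inverse or measure-preserving steps --- transferring a line between consecutive spikes is IV.2a followed by IV.2b (Lemma~\ref{l:IVgroupline}), which first raises a distance from $1$ to $2$ and then lowers it back, and moving a free spike to the next group is IV.1a followed by IV.1b (Lemma~\ref{l:IVfreespike}), which leaves $\sum_j d_j$ unchanged. No well-founded measure decreases under a set of moves containing inverse pairs, so ``apply moves until none is available'' either fails to terminate or halts at non-canonical forms, and the confluence you defer as ``the main obstacle'' genuinely fails without a prescribed phase ordering (shorten distances, merge groups, concentrate lines and triangles, then rotate).

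The paper sidesteps both issues by not rewriting at all: Algorithm~\ref{alg:goodmut} computes the normal-form parameters in closed form in a single pass, extracting from the input sequence the index sets $\cI_D$ (positions with $d_i$ odd $\geq 3$) and $\cI_T$ (positions with triangles), and evaluating the quantities $a(I)$, $b(I)$, $s(I)$ over the positive and negative partitions these sets induce; correctness is packaged as a map $\Sigma:\cQ\to\cS$ that is shown (in the final proposition, with proof deferred to arguments as in Section~\ref{sec:stdder}) to be constant on good-mutation classes and to satisfy $\Sigma(Q(\sigma))=\sigma$. This replaces your completeness-and-confluence obligation by the single invariance statement for $\Sigma$, and it makes the normalization linear, with the cyclic comparison the only quadratic step. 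Your approach could likely be repaired by organizing the moves into the phases used in Section~\ref{sec:stdder}, but as written the measure-based termination claim and the type-invariance claim are both incorrect.
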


\begin{remark}
It can be shown that the running time of this algorithm is at 
most quadratic in the number of parameters.
\end{remark}

We provide also a list of ``standard forms'' for good mutation
equivalence.

\begin{theorem}
\label{t:stdgood} A cluster-tilted algebra of type $D_n$ is good
mutation equivalent to exactly one of the cluster-tilted algebras
with the following quivers:
\begin{enumerate}
\renewcommand{\theenumi}{\alph{enumi}}
\renewcommand{\labelenumi}{(\theenumi)}
\item
$D_n$ (i.e.\ Type I with a linearly oriented $A_{n-2}$ quiver attached);
\[
\xymatrix@=1pc{
{\bullet} \ar[dr] \\
& {\bullet} \ar[r] & {\dots} \ar[r] & {\bullet} \\
{\bullet} \ar[ur]
}
\]

\item
Type II as in the following figure, where $S,T \geq 0$ and $S+2T=n-4$;
\[
\xymatrix@=0.25pc{
&& {\bullet} \ar[ddll] &&
&& && & {\bullet} \ar[ddl] && && {\bullet} \ar[ddl]
\\ \\
{\bullet} \ar[rrrr] && &&
{\bullet} \ar[rr]^{1} \ar[uull] \ar[ddll]
& & {\ldots} \ar[rr]^{S} & &
{\bullet} \ar[rr]^{1}  & & {\bullet} \ar[uul] & {\ldots} & {\bullet}
\ar[rr]^{T} & & {\bullet} \ar[uul]
\\ \\
&& {\bullet} \ar[uull]
}
\]

\item
Type III as in the following figure, with $S \geq 0$, the non-negative
integers $T_1, T_2$ are considered up to rotation of the sequence
$(T_1, T_2)$, and $S + 2(T_1 + T_2) = n-4$;
\[
\xymatrix@=0.25pc{
& {\bullet} \ar[ddr] && && {\bullet} \ar[ddr] &
&& {\bullet} \ar[ddll] &&
&& && & {\bullet} \ar[ddl] && && {\bullet} \ar[ddl]
\\ \\
{\bullet} \ar[uur] && {\bullet} \ar[ll]_{T_2} & {\ldots} & {\bullet}
\ar[uur] &&
{\bullet} \ar[ll]_{1} \ar[ddrr] && && {\bullet} \ar[rr]^{1} \ar[uull]
&& {\ldots} \ar[rr]^{S} &&
{\bullet} \ar[rr]^{1}  && {\bullet} \ar[uul] & {\ldots} & {\bullet}
\ar[rr]^{T_1} && {\bullet} \ar[uul]
\\ \\
&& && && && {\bullet} \ar[uurr]
}
\]

\item [($\mathrm{d_1}$)]
Type IV with a central cycle of length $n$ without any spikes;
\begin{center}
\includegraphics[scale=0.75]{type_IV_cycle}
\end{center}

\item [($\mathrm{d_{2,1}}$)]
Type IV with parameter sequence
$
\bigl( (1,S,0), (1,0,0), \dots, (1,0,0) \bigr)
$
of length $b \geq 3$, with $S \geq 0$ such that $n = 2b + S$ and the
attached rooted quiver of type $A$ is linearly oriented $A_{S+1}$;
\begin{center}
\includegraphics[scale=0.75]{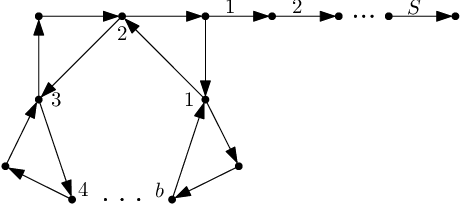}
\end{center}

\item [($\mathrm{d_{2,2}}$)]
Type IV with parameter sequence
\[
\bigl( (1,S,T_1), (1,0,0), \dots, (1,0,0),
(1,0,T_2), (1,0,0), \dots, (1,0,0), \dots,
(1,0,T_l), (1,0,0), \dots, (1,0,0) \bigr)
\]
which is a concatenation of $l \geq 1$ blocks of positive lengths
$b_1, b_2, \dots, b_l$ whose sum is not smaller than $3$,
with $S \geq 0$ and $T_1, \dots, T_l > 0$ considered up to rotation
of the sequence
$
\bigl( (b_1, T_1), (b_2, T_2), \dots, (b_l, T_l) \bigr),
$
$n = 2(b_1 + \dots + b_l + T_1 + \dots + T_l) + S$ and the attached
quivers of type $A$ are in standard form;
\begin{center}
\includegraphics[scale=0.75]{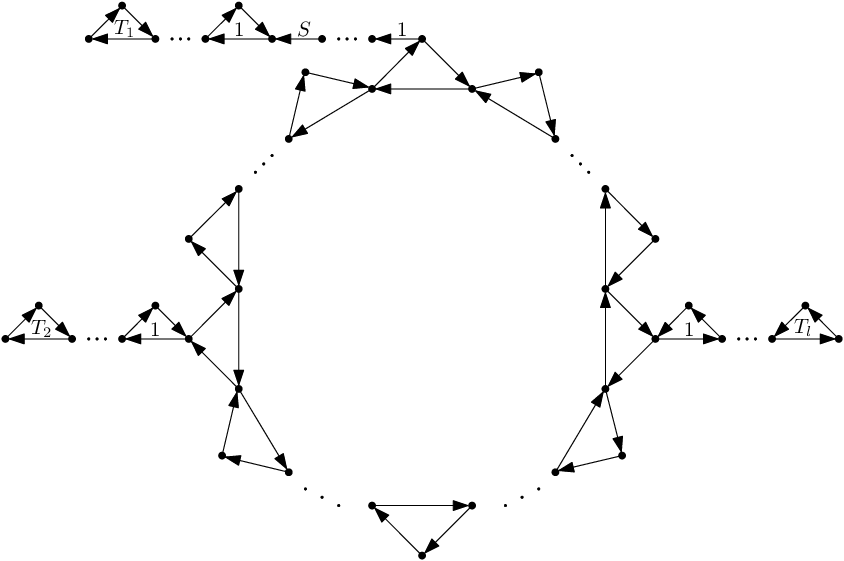}
\end{center}

\item [($\mathrm{d_{3,1}}$)]
Type IV with parameter sequence
$
\bigl( (1,0,0), \dots, (1,0,0), (3, S_1, 0), \dots, (3, S_a, 0) \bigr)
$
for some $a>0$, where the number of the triples $(1,0,0)$ is $b \geq
0$, the sequence of non-negative integers $(S_1, \dots, S_a)$ is
considered up to a cyclic permutation, $n = 4a + 2b + S_1 + \dots +
S_a$ and the attached rooted quivers of type $A$ are in standard form
(i.e.\ linearly oriented $A_{S_1+1}, \dots, A_{S_a+1}$);
\begin{center}
\includegraphics[scale=0.75]{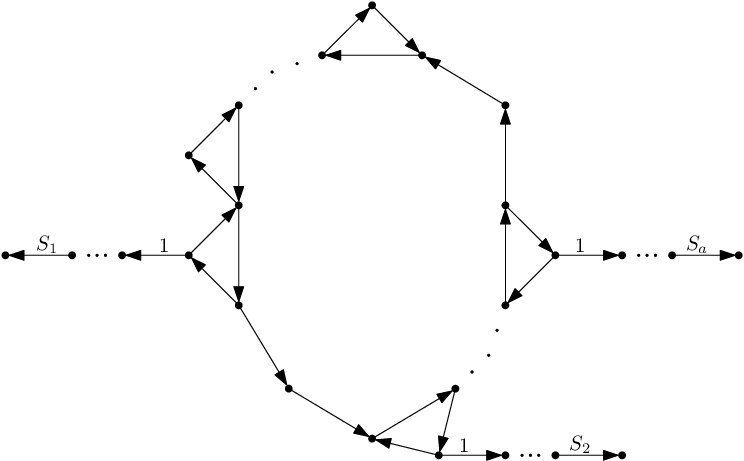}
\end{center}

\item [($\mathrm{d_{3,2}}$)]
Type IV with parameter sequence which is a concatenation of $l \geq 1$
sequences of the form
\[
\gamma_j = \begin{cases}
\bigl( (1,0,T_j), (1,0,0), \dots, (1,0,0), (3,S_{j,1},0), (3,S_{j,2},0),
\dots, (3,S_{j,a_j},0) \bigr) & \text{if $b_j > 0$,} \\
\bigl( (3, S_{j,1}, T_j), (3, S_{j,2}, 0), \dots, (3, S_{j,a_j},0)
& \text{otherwise,}
\end{cases}
\]
where each sequence $\gamma_j$ for $1 \leq j \leq l$ is defined by
non-negative integers $a_j$, $b_j$ not both zero, a sequence of $a_j$
non-negative integers $S_{j,1}, \dots, S_{j,a_j}$ and a positive
integer $T_j$, and not all the $a_j$ are zero. All these numbers are
considered up to rotation of the $l$-term sequence
\[
\Bigl( \bigl(b_1, (S_{1,1},\dots,S_{1,a_1}), T_1 \bigr),
\bigl(b_2, (S_{2,1},\dots,S_{2,a_2}), T_2 \bigr), \dots,
\bigl(b_l, (S_{l,1},\dots,S_{l,a_l}), T_l \bigr)
\Bigr),
\]
they satisfy $n = \sum_{j=1}^l (4a_j + 2b_j + S_{j,1} + \dots +
S_{j,a_j} + 2T_j)$, and the attached rooted quivers of type $A$ are in
standard form.

In other words, the quiver is a concatenation of $l \geq 1$ quivers $\gamma_j$ of the form
$$\gamma_j=
\begin{cases}
\includegraphics[scale=0.75]{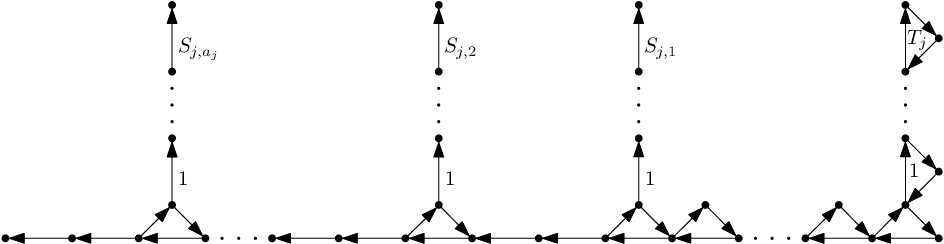} & \text{if $b_j > 0$,}\\
\includegraphics[scale=0.75]{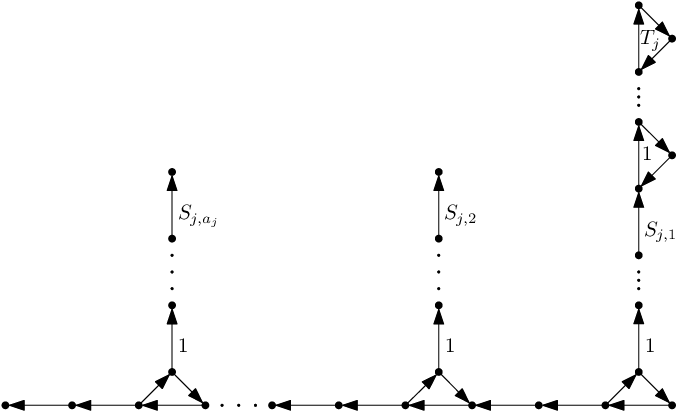} & \text{if $b_j = 0$,}
\end{cases}$$
where the last vertex of $\gamma_l$ is glued to the first vertex of $\gamma_1$.
\end{enumerate}
\end{theorem}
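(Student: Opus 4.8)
The plan is to prove the theorem in two stages: an \emph{existence} stage, showing that every cluster-tilted algebra of type $D$ can be carried by a sequence of good mutations to one of the listed quivers, and a \emph{uniqueness} stage, showing that no two distinct listed quivers are good mutation equivalent.

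For existence I would rely on the complete determination of good mutations for types $A$ and $D$ carried out in Section~\ref{sec:goodmut}, which translates each admissible good mutation into an explicit combinatorial move on the parametric data. The first reduction is to normalize the attached rooted quivers of type $A$: by Theorem~\ref{thm-good-typeA} each of them can be brought to its standard form by good mutations performed inside the type-$A$ part, and using the criterion of Proposition~\ref{p:critmut} one checks that these lift to good mutations of the ambient type-$D$ quiver. Once the attached quivers are in standard form, I would normalize the skeleton and the arrangement of spikes, proceeding by type. The subtle point is that good mutations need not preserve the type: for instance a type I quiver carrying at least one triangle is absorbed into type II, and various type IV configurations collapse or reorganize. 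I would therefore set up the reduction as a rewriting system on the parameter sequences, using the moves of Section~\ref{sec:goodmut} as rewriting rules, and verify that every sequence can be rewritten to one of the normal-form patterns (a)--($\mathrm{d_{3,2}}$). The bulk of the work here is the case analysis for type IV, where one must show that triangles can be slid along the central cycle, consecutive spikes at distance $1$ can be grouped, and spikes at larger distance can be normalized to the blocks $\gamma_j$, each step being realized by a genuine good mutation certified by Proposition~\ref{p:critmut} and the tables.

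For uniqueness the essential difficulty is that good mutation equivalence is strictly finer than derived equivalence, so the derived invariants of Section~\ref{sec:invariants}---the Cartan determinant of Theorem~\ref{thm-det-typeD} and the asymmetry polynomials of Remark~\ref{rem:polyIV}---are in general too coarse to separate the listed forms; this is already visible in Examples~\ref{ex:selfinj} and~\ref{ex:D8}, where derived-equivalent algebras fail to be good mutation equivalent. Instead I would construct a normalization map $\Phi$ assigning to each type-$D$ quiver one of the listed forms, read off from the combinatorial data that survives good mutations: the type, the standard forms of the attached type-$A$ quivers, and the multiset of spike/distance parameters taken up to the stated rotational symmetry. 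The key lemma is that $\Phi$ is invariant under a \emph{single} good mutation, i.e.\ $\Phi(Q) = \Phi(\mu_k(Q))$ whenever the mutation at $k$ is good, while $\Phi$ fixes each listed form. Since good-mutation-equivalent quivers are joined by a chain of single good mutations, this yields at once that each good-mutation class contains exactly one listed form, proving both halves of the ``one (and only one!)'' assertion.

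The main obstacle is precisely the invariance of $\Phi$ under single good mutations. Establishing it requires going through the full classification of good mutations from Section~\ref{sec:goodmut}---equivalently, the local scenarios of the preceding proposition together with the explicit configurations tabulated there---and checking for each that the induced move on the parametric data leaves the value of $\Phi$ unchanged. The most delicate verifications are again in type IV, since a good mutation there can change the length of the central cycle, the number and positions of the spikes, and even the apparent type of the skeleton; one must confirm that every such move respects the rotational equivalence classes underlying the forms ($\mathrm{d_1}$)--($\mathrm{d_{3,2}}$). Carrying out this finite but intricate verification, and simultaneously checking that the reduction of the existence stage always terminates at the form computed by $\Phi$, is where the combinatorial heart of the proof lies; it is also what underlies the decision algorithm of Theorem~\ref{t:alggood}.
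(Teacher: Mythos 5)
Your two-stage plan is essentially the paper's own proof: Section~\ref{sec:algstdform} proceeds by (i) reducing an arbitrary quiver via the parametric good-mutation moves of Table~\ref{t:mutgood} (types I/II are handled by Lemma~\ref{l:IImutgood}, the spikeless cycle admits no good mutations at all, and types III/IV with spikes are normalized using the moves IV.1a--IV.2b together with Lemmas~\ref{l:IVfreespike} and~\ref{l:IVgroupline}), and (ii) exhibiting a complete invariant --- your map $\Phi$ is precisely the paper's map $\Sigma$ computed by Algorithm~\ref{alg:goodmut} --- satisfying exactly your two requirements: invariance of $\Sigma$ under good mutation equivalence, and $\Sigma(Q(\sigma))=\sigma$ on the standard forms, so that distinct listed forms are inequivalent. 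The paper, like you, defers the finite but intricate verification to the reader, calling it ``long but straightforward.''

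However, one step in your existence stage is wrong as written and, taken literally, would sink the whole argument: you assert that ``triangles can be slid along the central cycle,'' each step ``realized by a genuine good mutation.'' Triangles in the attached rooted quivers of type $A$ \emph{cannot} be moved between spikes by good mutations; only lines can (Lemma~\ref{l:IVgroupline}). Moving a triangle from one spike to another requires a good \emph{double} mutation (Lemma~\ref{l:IVgrouptriangle}, Table~\ref{t:mutdblgood}), which yields a derived equivalence but is excluded from good mutation equivalence. This is exactly the difference between the coarser derived-equivalence forms ($\mathrm{d_2}$), ($\mathrm{d_3}$) of Theorem~\ref{t:stdder}, where all triangles are concentrated at a single spike, and the finer forms ($\mathrm{d_{2,2}}$), ($\mathrm{d_{3,2}}$) here, whose block structure records \emph{where} the $T_j$ triangles sit, up to rotation. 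If your rewriting system slides type-$A$ triangles, it terminates at the Theorem~\ref{t:stdder} forms and then contradicts your own uniqueness stage, since $\Phi$ could not be invariant under the alleged good mutation performing the slide. If instead by ``triangles'' you meant the spikes themselves (which are oriented $3$-cycles on the central cycle), the claim is correct via the moves IV.1a/IV.1b/IV.2a/IV.2b and Lemma~\ref{l:IVfreespike} --- but you must say so explicitly, because pinning the type-$A$ triangles while letting lines and free spikes migrate is the combinatorial heart of why Theorem~\ref{t:stdgood} is strictly finer than Theorem~\ref{t:stdder}.
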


\begin{remark}
An algorithm to compute the standard form of a cluster-tilted
algebra of type $D$ given in parametric notation is presented in
Section \ref{sec:alg:goodmut}.
\end{remark}

The next remark explains how the standard forms for good mutation
equivalence can be seen as refinements of the standard forms for
derived equivalence appearing in Theorem~\ref{t:stdder}.

\begin{remark}
The derived equivalence classes of the forms~(a) and~(b) in
Theorem~\ref{t:stdder} are also good mutation equivalence classes
(of the corresponding form). However, derived equivalence classes of
the form~(c) in Theorem~\ref{t:stdder} decompose into (generally,
many) good mutation equivalence classes of the form (c) in
Theorem~\ref{t:stdgood}.

For a standard form listed in part ($\mathrm{d_{2}}$) of Theorem
\ref{t:stdder}, if $t=0$ then the set of cluster-tilted algebras
which can be brought to that standard form by a sequence of good
mutations and good double mutations comprises a good mutation
equivalence class of the form ($\mathrm{d_{2,1}}$), otherwise this
set decomposes into (usually many) good mutation equivalence classes
of the form ($\mathrm{d_{2,2}}$). Similarly, for a standard form
listed in part ($\mathrm{d_{3}}$) of Theorem \ref{t:stdder}, if all
the $t_i$ are zero then the corresponding set of cluster-tilted
algebras comprises a good mutation equivalence class of the form
($\mathrm{d_{3,1}}$), otherwise it decomposes into (usually many)
good mutation equivalence classes of the form ($\mathrm{d_{3,2}}$).
\end{remark}

\section{Good mutation equivalences}
\label{sec:goodmut}
In this section we determine all the good mutations for cluster-tilted algebras
of Dynkin types $A$ and $D$.

\subsection{Rooted quivers of type $A$}

For a rooted quiver $(Q,v)$ of type $A$, we call a mutation at a vertex
other than the root $v$ a mutation \emph{outside the root}.

\begin{prop} \label{p:goodmutA}
Any two rooted quivers of type $A$ with the same numbers of lines and
triangles can be connected by a sequence of good mutations outside the
root.
\end{prop}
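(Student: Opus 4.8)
The plan is to show that every rooted quiver $(Q,v)$ of type $A$ can be transformed, by a sequence of good mutations outside the root, into its standard form~\eqref{e:stdA}. This suffices: two rooted quivers with the same $s$ and $t$ have identical standard forms, and a good mutation is reversible, because the good-ness of the mutation at a vertex $k$ from $Q$ to $\mu_k(Q)$ is equivalent to that of the mutation at $k$ from $\mu_k(Q)$ back to $Q$ (conditions (a) and (b) of Proposition~\ref{p:goodmut} simply exchange roles). Concatenating a path from one quiver to the common standard form with the reverse of such a path for the other then connects the two quivers.

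Two structural observations guide the argument. First, the explicit good mutations used below will manifestly preserve the pair $(s(Q),t(Q))$, so the procedure only rearranges the lines and triangles and never creates or destroys them; conceptually this is forced, since the Cartan determinant $\det C_Q = 2^{t(Q)}$ is a derived invariant while good mutations preserve the number of vertices $n = 1 + s + 2t$. Second, inspecting the nine admissible neighborhoods in Figure~\ref{fig:ngbrA} shows that every vertex of degree at least three lies on a triangle; hence the underlying graph of $Q$ is a tree whose branch points are all triangle vertices, with at most two triangles meeting at any one vertex.

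The engine of the argument is a short list of explicit local good mutations, whose goodness I would verify directly from the schurian criterion of Proposition~\ref{p:critmut} together with Proposition~\ref{p:goodmut}, using that a path is nonzero exactly when it never traverses two consecutive edges of a single triangle (Remark~\ref{rem:relctaA}). The principal move is a triangle--line rotation: at a triangle vertex carrying an attached line, a single mutation transports the triangle one step along the line, turning the former line-neighbor into a triangle vertex and releasing the opposite triangle vertex as a new pendant line, all while fixing $(s,t)$; a degenerate version merely reorients a pendant line at a leaf. I would then argue, by induction on $n$ (equivalently, via a monotone potential that records the total displacement of the triangles from the far end and of the lines from the root), that these moves are enough: one first applies rotations to remove every branch point and straighten $Q$ into a single chain of lines and triangles, and then sorts the chain so that the $s$ lines sit next to the root and the $t$ triangles form the terminal chain, which is exactly~\eqref{e:stdA}. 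Since no mutation is ever performed at $v$, the root neighborhood (one of the three in the top row of Figure~\ref{fig:ngbrA}) is preserved at every step.

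The main obstacle is the verification that the transport moves are genuinely good in each local configuration in which they occur. This amounts to checking, through Proposition~\ref{p:critmut}, that the relevant negative and positive algebra mutations are defined at the chosen vertex, a finite but delicate case analysis over the neighborhoods of the mutated vertex and its neighbors; in particular one must distinguish whether the attached line points toward or away from the triangle and whether a second triangle shares the vertex. A secondary point needing care is the design of the potential so that each move strictly decreases it and the process terminates precisely at the standard form, without ever calling for a mutation at the root.
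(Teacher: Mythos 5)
Your proposal is correct and takes essentially the same route as the paper: reduce to showing that every rooted quiver of type $A$ reaches its standard form~\eqref{e:stdA} by good mutations outside the root, verify via Propositions~\ref{p:critmut} and~\ref{p:goodmut} that precisely the triangle-count-preserving local moves (sink/source flips, the triangle--line rotation, and the two-triangle move) are good, and then straighten and sort the chain by an inductive procedure. The paper carries out exactly the finite neighborhood check you defer in Table~\ref{t:mutA} (where the only bad move is the one creating or destroying a triangle) and implements your straightening/sorting step by the algorithm borrowed from \cite[Lemma 3.10]{Bastian}, so your plan matches the published proof in all essentials.
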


\begin{remark} \label{rem:stdA}
It is enough to show that a rooted quiver of type $A$ can be transformed
to its standard form via good mutations outside the root.
\end{remark}

We begin by characterizing the good mutations in Dynkin type $A$.

\begin{lemma}
Let $Q$ be a quiver mutation equivalent to $A_n$. Then a mutation of
$Q$ is good if and only if it does not change the number of triangles.
\end{lemma}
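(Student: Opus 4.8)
The plan is to use the combinatorial criterion for good mutations (Propositions~\ref{p:critmut} and~\ref{p:goodmut}) together with the explicit local structure of quivers mutation equivalent to $A_n$ (the nine neighborhoods of Figure~\ref{fig:ngbrA}) and the simple description of the relations (Remark~\ref{rem:relctaA}), where every triangle contributes exactly the three zero relations on its composable pairs of arrows. Since Fomin--Zelevinsky mutation at a vertex $k$ is a local operation affecting only $k$ and its neighbors, both the combinatorial type of the mutation and its effect on the triangle count $t(Q)$ are determined entirely by the neighborhood of $k$. So the strategy is to reduce the whole statement to a finite case check: enumerate the possible neighborhoods of $k$ (from Figure~\ref{fig:ngbrA}), and for each one compute (i) the change in the number of triangles under $\mu_k$, and (ii) whether the mutation is good, by applying the schurian mutation criterion to decide which of the four algebra mutations $\mu^\pm_k(\gL_Q)$, $\mu^\pm_k(\gL_{\mu_k(Q)})$ are defined.

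First I would note that the mutation $\mu_k$ preserves the property of being mutation equivalent to $A_n$, and that $t(Q)$ changes only when the neighborhood of $k$ involves a triangle being created or destroyed at $k$. Inspecting the nine neighborhoods, the cases where $k$ is a vertex of a triangle (the oriented $3$-cycles) are exactly the ones where mutating at $k$ can alter the local cycle structure; when $k$ lies on no triangle and has the ``line'' neighborhoods, mutation typically creates or resolves a triangle, changing $t(Q)$. I would organize the neighborhoods into those that keep $t$ fixed versus those that change it, using the remark $n = 1 + s(Q) + 2t(Q)$ as a consistency check (since $n$ is invariant, any change in $t$ forces a compensating change in $s$). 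This bookkeeping isolates precisely which neighborhoods are $t$-preserving.

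Next, for each neighborhood I would run the criterion of Proposition~\ref{p:critmut}. Because the algebra is schurian and the relations are the triangle zero-relations of Remark~\ref{rem:relctaA}, checking whether $\mu^-_k$ or $\mu^+_k$ is defined amounts to examining the short nonzero paths through $k$ and verifying the local ``factoring through an arrow'' conditions; these involve only vertices in the neighborhood of $k$, so each check is finite and explicit. By Proposition~\ref{p:goodmut} the quiver mutation at $k$ is good precisely when one of the two matching pairs of algebra mutations is defined. I would then verify, neighborhood by neighborhood, the claimed equivalence: the $t$-preserving neighborhoods are exactly those for which a defining pair exists (good mutation), while the $t$-changing neighborhoods are exactly those for which all four algebra mutations fail to pair up correctly (bad mutation). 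Tabulating these finitely many cases completes the proof.

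The main obstacle I anticipate is the case analysis for the mutation criterion: one must correctly track not only the neighborhood of $k$ in $Q$ but also the relations (zero-relations from triangles) on both $\gL_Q$ and $\gL_{\mu_k(Q)}$, since the nonzero paths through $k$ depend on which compositions vanish, and a single triangle attached at $k$ changes which paths are zero. The subtlety is that $\mu_k(Q)$ may have a different triangle configuration at $k$, so the criterion must be applied \emph{separately} to $Q$ and to $\mu_k(Q)$, and the good/bad verdict depends on comparing both. Getting the correspondence between the global invariant (the triangle count) and the purely local definedness conditions exactly right across all neighborhoods is where the real work lies; once the table is assembled, the biconditional ``good $\Leftrightarrow$ $t$ unchanged'' reads off immediately.
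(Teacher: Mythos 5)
Your proposal is correct and follows essentially the same route as the paper: the paper's proof is exactly the finite case check you describe, tabulating each possible neighborhood of the mutated vertex together with its mutation (Table~\ref{t:mutA}), applying Proposition~\ref{p:critmut} to both $\gL_Q$ and $\gL_{\mu_k(Q)}$ to see which algebra mutations are defined, and invoking Proposition~\ref{p:goodmut}, whereupon the unique bad case (row 2b) is precisely the one creating or destroying a triangle.
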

\begin{proof}
Each row of Table~\ref{t:mutA} displays a pair of neighborhoods of a
vertex $\bullet$ in such a quiver related by a mutation (at $\bullet$).
Using the description of the relations of the corresponding
cluster-tilted algebras as in Remark~\ref{rem:relctaA}, we can use
Proposition~\ref{p:critmut} and easily determine, for each entry in the
table, which of the negative $\mu^-_{\bullet}$ or the positive
$\mu^+_\bullet$ mutations is defined. Then Proposition~\ref{p:goodmut}
tells us if the quiver mutation is good or not.

By examining the entries in the table, we see that the only bad
mutation occurs in row $2b$, where a triangle is created (or
destroyed).
\end{proof}

\begin{table}
\[
\begin{array}{|c|cc|cc|c|}
\hline
1 &
\begin{array}{c}
\xymatrix@R=1pc@C=1.2pc{
{\circ} \ar[dr] \\
& {\bullet} \\
& &
}
\end{array}
& \mu^-_{\bullet} &
\begin{array}{c}
\xymatrix@R=1pc@C=1.2pc{
{\circ} \\
& {\bullet} \ar[ul] \\
& &
}
\end{array}
& \mu^+_{\bullet}
& \text{good}
\\ \hline
2a &
\begin{array}{c}
\xymatrix@R=1pc@C=1.2pc{
{\circ} \ar[dr] \\
& {\bullet} \\
& & {\circ} \ar[ul]
}
\end{array}
& \mu^-_{\bullet} &
\begin{array}{c}
\xymatrix@R=1pc@C=1.2pc{
{\circ} \\
& {\bullet} \ar[ul] \ar[dr] \\
& & {\circ}
}
\end{array}
& \mu^+_{\bullet}
& \text{good}
\\ \hline
2b &
\begin{array}{c}
\xymatrix@R=1pc@C=1.2pc{
{\circ} \ar[dr] \\
& {\bullet} \ar[dl] \\
{\circ} & &
}
\end{array}
& \mu^-_{\bullet}, \mu^+_{\bullet} &
\begin{array}{c}
\xymatrix@R=1pc@C=1.2pc{
{\circ} \ar[dd] \\
& {\bullet} \ar[ul] \\
{\circ} \ar[ur] & &
}
\end{array}
& \text{none}
& \text{bad}
\\ \hline
3 &
\begin{array}{c}
\xymatrix@R=1pc@C=1.2pc{
{\circ} \ar[dr] \\
& {\bullet} \ar[dl] \\
{\circ} \ar[rr] & & {\circ} \ar[ul]
}
\end{array}
& \mu^-_{\bullet} &
\begin{array}{c}
\xymatrix@R=1pc@C=1.2pc{
{\circ} \ar[dd] \\
& {\bullet} \ar[ul] \ar[dr] \\
{\circ} \ar[ur] & & {\circ}
}
\end{array}
& \mu^+_{\bullet}
& \text{good}
\\ \hline
4 &
\begin{array}{c}
\xymatrix@R=1pc@C=1.2pc{
{\circ} \ar[dr] & & {\circ} \ar[ll] \\
& {\bullet} \ar[dl] \ar[ur] \\
{\circ} \ar[rr] & & {\circ} \ar[ul]
}
\end{array}
& \mu^-_{\bullet}, \mu^+_{\bullet} &
\begin{array}{c}
\xymatrix@R=1pc@C=1.2pc{
{\circ} \ar[dd] & & {\circ} \ar[dl] \\
& {\bullet} \ar[ul] \ar[dr] \\
{\circ} \ar[ur] & & {\circ} \ar[uu]
}
\end{array}
& \mu^-_{\bullet}, \mu^+_{\bullet}
& \text{good}
\\ \hline
\end{array}
\]
\caption{The neighborhoods in Dynkin type $A$ and their mutations.}
\label{t:mutA}
\end{table}

\begin{proof}[Proof of Proposition~\ref{p:goodmutA}]
In view of Remark~\ref{rem:stdA},
we give an algorithm for the mutation to the standard form above
(similar to the procedures in \cite{Bastian} and \cite{Buan-Vatne}):
Let $Q$ be a rooted quiver of type $A$ which has at least one triangle
(otherwise we get the desired orientation of a standard form by
sink/source mutations as in 1 and 2a in Table~\ref{t:mutA}).
For any triangle $C$ in $Q$ denote by $v_C$ the unique vertex of the
triangle having minimal distance to the root $c$. Choose a triangle
$C_1$ in $Q$ such that to the vertices of the triangle $\not=
v_1:=v_{C_1}$ only linear parts are attached; denote them by $p_1$ and
$p_2$, respectively.

\begin{center}
\includegraphics[scale=0.75]{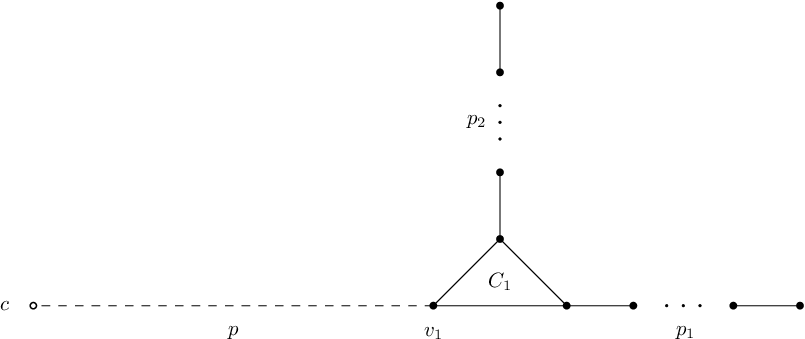}
\end{center}
Denote by $C_2, \dots, C_k$ the (possibly) other triangles along the path $p$
from $v_1$ to $c$.

\begin{center}
\includegraphics[scale=0.75]{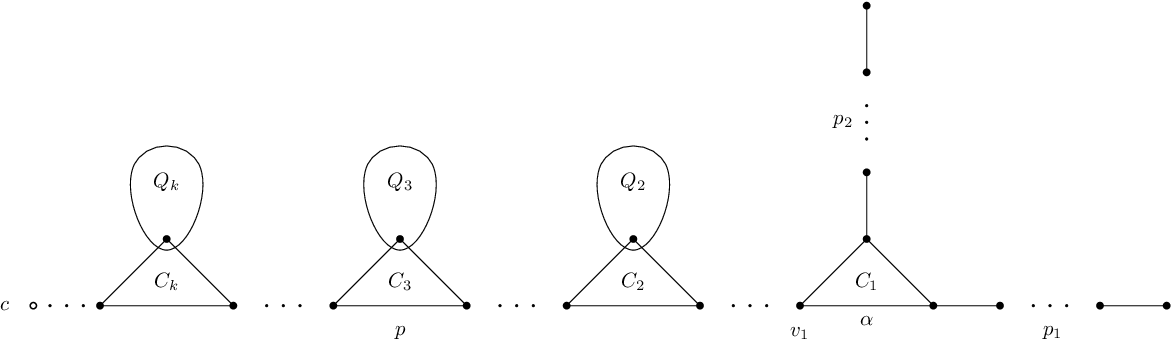}
\end{center}
Now we move all subquivers $p_2$, $Q_2, \dots, Q_k$ onto the path $p_1 \alpha p$.
For this we use the same mutations as in the steps 1 and 2 in in \cite[Lemma 3.10]{Bastian}.
Note that this can be done with the good mutations presented in Table~\ref{t:mutA}.
Thus, we get a new complete set of triangles $\{C_1, C'_2, \dots, C'_l\}$ on the
path from $v_1$ to $c$:

\begin{center}
\includegraphics[scale=0.75]{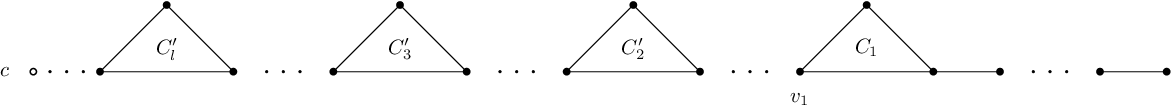}
\end{center}
We move all the triangles along the path to the right side.
For this we use the same mutations as in step 4 in \cite[Lemma 3.10]{Bastian}.
We then obtain a quiver of the form

\begin{center}
\includegraphics[scale=0.75]{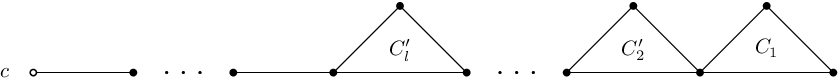}
\end{center}
\end{proof}

\subsection{Good mutations in Types I and II}

The good mutations involving quivers in Types I and II are given in
Tables~\ref{t:mutI} and~\ref{t:mutII} below.
In each row of these tables, we list:
\begin{enumerate}
\renewcommand{\theenumi}{\alph{enumi}}
\item
The quiver, where $Q$, $Q'$, $Q''$ and $Q'''$ are rooted quivers of type $A$;

\item
Which of the algebra mutations (negative $\mu^-_{\bullet}$, or positive
$\mu^+_{\bullet}$) at the distinguished vertex $\bullet$ are defined;

\item
The (Fomin-Zelevinsky) mutation of the quiver at the vertex $\bullet$;
and for the corresponding cluster-tilted algebra:

\item
Which of the algebra mutations at the vertex $\bullet$ are defined.

\item
Based on these, we determine whether the mutation is good or not,
see Proposition~\ref{p:goodmut}.
\end{enumerate}

To check whether a mutation is defined or not, we use the criterion of
Proposition~\ref{p:critmut}. Observe that since the gluing process
introduces no new relations, it is enough to assume that each rooted
quiver of type $A$ consists of just a single vertex. The finite list of
quivers we obtain can thus be examined by using a computer. We
illustrate the details of these checks on a few examples. Since there
is at most one arrow between any two vertices, we indicate a path by
the sequence of vertices it traverses.

\begin{table}
\[
\begin{array}{|c|cc|cc|c|}
\hline
\text{I.1} &
\begin{array}{c}
\xymatrix@=1pc{
{\bullet} \ar[dr] \\
& {Q} \\
{\circ} \ar[ur]
}
\end{array}
& \mu^+_{\bullet} &
\begin{array}{c}
\xymatrix@=1pc{
{\bullet} \\
& {Q} \ar[ul] \\
{\circ} \ar[ur]
}
\end{array}
& \mu^-_{\bullet}
& \text{good}
\\ \hline
%
\text{I.2} &
\begin{array}{c}
\xymatrix@=1pc{
{\circ} \ar[dr] \\
& {\bullet} & {Q} \ar[l]\\
{\circ} \ar[ur]
}
\end{array}
& \mu^-_{\bullet} &
\begin{array}{c}
\xymatrix@=1pc{
{\circ} \\
& {\bullet} \ar[ul] \ar[dl] \ar[r] & {Q}\\
{\circ}
}
\end{array}
& \mu^+_{\bullet}
& \text{good}
\\ \hline
%
\text{I.3a} &
\begin{array}{c}
\xymatrix@=1pc{
{\circ} \ar[dr] \\
& {\bullet} \ar[r] & {Q} \\
{\circ} \ar[ur]
}
\end{array}
& \mu^-_{\bullet}, \mu^+_{\bullet} &
\begin{array}{c}
\xymatrix@=1pc{
& {\circ} \ar[dl] \\
{Q} \ar[rr] & & {\bullet} \ar[ul] \ar[dl]\\
& {\circ} \ar[ul]
}
\end{array}
& \text{none}
& \text{bad}
\\ \hline
%
\text{I.3b} &
\begin{array}{c}
\xymatrix@=1pc{
{\circ} \\
& {\bullet} \ar[ul] \ar[dl] & {Q} \ar[l] \\
{\circ}
}
\end{array}
& \mu^-_{\bullet}, \mu^+_{\bullet} &
\begin{array}{c}
\xymatrix@=1pc{
& {\circ} \ar[dl] \\
{\bullet} \ar[rr] & & {Q} \ar[ul] \ar[dl]\\
& {\circ} \ar[ul]
}
\end{array}
& \text{none}
& \text{bad}
\\ \hline
%
\text{I.4a} &
\begin{array}{c}
\xymatrix@=1pc{
{\circ} \ar[dr] \\
& {\bullet} \ar[dl] \ar[r] & {Q} \\
{\circ}
}
\end{array}
& \mu^-_{\bullet}, \mu^+_{\bullet} &
\begin{array}{c}
\xymatrix@=1pc{
& {Q} \ar[dr] \\
{\circ} \ar[ur] \ar[dr] & & {\bullet} \ar[ll] \\
& {\circ} \ar[ur]
}
\end{array}
& \text{none}
& \text{bad}
\\ \hline
%
\text{I.4b} &
\begin{array}{c}
\xymatrix@=1pc{
{\circ} \ar[dr] \\
& {\bullet} \ar[dl] & {Q} \ar[l] \\
{\circ}
}
\end{array}
& \mu^-_{\bullet}, \mu^+_{\bullet} &
\begin{array}{c}
\xymatrix@=1pc{
& {Q} \ar[dr] \\
{\bullet} \ar[ur] \ar[dr] & & {\circ} \ar[ll] \\
& {\circ} \ar[ur]
}
\end{array}
& \text{none}
& \text{bad}
\\ \hline
%
\text{I.4c} &
\begin{array}{c}
\xymatrix@=1pc{
{\circ} \ar[dr] & & {Q''} \ar[dd] \\
& {\bullet} \ar[dl] \ar[ur] \\
{\circ} & & {Q'} \ar[ul]
}
\end{array}
& \mu^-_{\bullet}, \mu^+_{\bullet} &
\begin{array}{c}
\xymatrix@=1pc{
{\circ} \ar[dd] \ar[rr] & & {Q''} \ar[dl] \\
& {\bullet} \ar[ul] \ar[dr] \\
{\circ} \ar[ur] & & {Q'} \ar[ll]
}
\end{array}
& \text{none}
& \text{bad}
\\ \hline
%
\text{I.5a} &
\begin{array}{c}
\xymatrix@=1pc{
{\circ} \ar[dr] & & {Q''} \ar[dd] \\
& {\bullet} \ar[ur] \\
{\circ} \ar[ur] & & {Q'} \ar[ul]
}
\end{array}
& \mu^-_{\bullet} &
\begin{array}{c}
\xymatrix@=1pc{
& {\circ} \ar[dl] \\
{Q''} \ar[rr] & & {\bullet} \ar[ul] \ar[dl] \ar[r] & {Q'} \\
& {\circ} \ar[ul]
}
\end{array}
& \mu^+_{\bullet}
& \text{good}
\\ \hline
%
\text{I.5b} &
\begin{array}{c}
\xymatrix@=1pc{
{\circ} & & {Q''} \ar[dd] \\
& {\bullet} \ar[ul] \ar[dl] \ar[ur] \\
{\circ} & & {Q'} \ar[ul]
}
\end{array}
& \mu^+_{\bullet} &
\begin{array}{c}
\xymatrix@=1pc{
& & {\circ} \ar[dl] \\
{Q''} \ar[r] & {\bullet} \ar[rr] & & {Q'} \ar[ul] \ar[dl] \\
& & {\circ} \ar[ul]
}
\end{array}
& \mu^-_{\bullet}
& \text{good}
\\ \hline
\end{array}
\]
\caption{Mutations involving Type I quivers.} \label{t:mutI}
\end{table}

\begin{table}
\[
\begin{array}{|c|cc|cc|c|}
\hline
\text{II.1} &
\begin{array}{c}
\xymatrix@=1pc{
& {\bullet} \ar[dl] \\
{Q''} \ar[rr] & & {Q'} \ar[ul] \ar[dl] \\
& {\circ} \ar[ul]
}
\end{array}
& \mu^-_{\bullet}, \mu^+_{\bullet} &
\begin{array}{c}
\xymatrix@=1pc{
& {\bullet} \ar[dr] \\
{Q''} \ar[ur] & & {Q'} \ar[dl] \\
& {\circ} \ar[ul]
}
\end{array}
& \text{none}
& \text{bad}
\\ \hline
%
\text{II.2} &
\begin{array}{c}
\xymatrix@=1pc{
& {\circ} \ar[dl] \\
{Q''} \ar[rr] & & {\bullet} \ar[ul] \ar[dl] & {Q'} \ar[l] \\
& {\circ} \ar[ul]
}
\end{array}
& \mu^-_{\bullet} &
\begin{array}{c}
\xymatrix@=1pc{
& & {\circ} \ar[dl] \\
{Q''} & {\bullet} \ar[l] \ar[rr] & & {Q'} \ar[ul] \ar[dl] \\
& & {\circ} \ar[ul]
}
\end{array}
& \mu^+_{\bullet}
& \text{good}
\\ \hline
%
\text{II.3} &
\begin{array}{c}
\xymatrix@=1pc{
& {\circ} \ar[dl] & & {Q'} \ar[dl] \\
{Q''} \ar[rr] & & {\bullet} \ar[ul] \ar[dl] \ar[dr] \\
& {\circ} \ar[ul] & & {Q'''} \ar[uu]
}
\end{array}
& \mu^-_{\bullet}, \mu^+_{\bullet} &
\begin{array}{c}
\xymatrix@=1pc{
{Q'''} \ar[dr] & & {\circ} \ar[dl] \\
& {\bullet} \ar[dl] \ar[rr] & & {Q'} \ar[ul] \ar[dl] \\
{Q''} \ar[uu] & & {\circ} \ar[ul]
}
\end{array}
& \mu^-_{\bullet}, \mu^+_{\bullet}
& \text{good}
\\ \hline
\end{array}
\]
\caption{Mutations involving Type II quivers.} \label{t:mutII}
\end{table}

\begin{example}
Consider the case I.4c in Table~\ref{t:mutI}. We look at the two
cluster-tilted algebras $\gL$ and $\gL'$ with the following quivers
\begin{align*}
\xymatrix@=1pc{
{\bullet_1} \ar[dr] & & {\bullet_4} \ar[dd] \\
& {\bullet_0} \ar[dl] \ar[ur] \\
{\bullet_2} & & {\bullet_3} \ar[ul]
}
& &
\xymatrix@=1pc{
{\bullet_1} \ar[dd] \ar[rr] & & {\bullet_4} \ar[dl] \\
& {\bullet_0} \ar[ul] \ar[dr] \\
{\bullet_2} \ar[ur] & & {\bullet_3} \ar[ll]
}
\end{align*}
and examine their mutations at the vertex $0$.

Since the arrow $1 \to 0$ does not appear in any relation of $\gL$, its
composition with any non-zero path starting at $0$ is non-zero. Thus
the negative mutation $\mu^-_0(\gL)$ is defined. Similarly, since $0
\to 2$ does not appear in any relation of $\gL$, its composition with
any non-zero path that ends at $0$ is non-zero, and the positive
mutation $\mu^+_0(\gL)$ is also defined.

Consider now $\gL'$. The path $0,1,2$ is non-zero, as it equals
$0,3,2$, but both compositions $2,0,1,2$ and $4,0,1,2$ vanish because
of the zero relations $2,0,1$ and $4,0,1$, hence $\mu^-_0(\gL')$ is not
defined. Similarly, the path $1,2,0$ is non-zero, as it equals $1,4,0$,
but both compositions $1,2,0,1$ and $1,2,0,3$ vanish because of the
zero relations $2,0,1$ and $2,0,3$, showing that $\mu^+_0(\gL')$ is not
defined.
\end{example}

\begin{example}
Consider the case I.5a in Table~\ref{t:mutI}. We look at the two
cluster-tilted algebras $\gL$ and $\gL'$ with the following quivers
\begin{align*}
\xymatrix@=1pc{
{\bullet_1} \ar[dr] & & {\bullet_4} \ar[dd] \\
& {\bullet_0} \ar[ur] \\
{\bullet_2} \ar[ur] & & {\bullet_3} \ar[ul]
}
& &
\xymatrix@=1pc{
& {\bullet_1} \ar[dl] \\
{\bullet_4} \ar[rr] & & {\bullet_0} \ar[ul] \ar[dl] \ar[r]
& {\bullet_3} \\
& {\bullet_2} \ar[ul]
}
\end{align*}
and examine their mutations at the vertex $0$.

As in the previous example, since the arrow $1 \to 0$ (or $2 \to 0$)
does not appear in any relation of $\gL$, the negative mutation
$\mu^-_0(\gL)$ is defined. But $\mu^+_0(\gL)$ is not defined since the
composition of the arrow $3 \to 0$ with $0 \to 4$ vanishes. Similarly
for $\gL'$, the positive mutation $\mu^+_0(\gL')$ is defined since the
arrow $0 \to 3$ does not appear in any relation, and $\mu^-_0(\gL')$ is
not defined since the composition of the arrow $4 \to 0$ with the arrow
$0 \to 1$ vanishes.
\end{example}

\begin{example}
Consider the case II.3 in Table~\ref{t:mutII}. We will show that
if $\gL$ is one of the cluster-tilted algebras with the quivers
given below
\begin{align*}
\xymatrix@=1pc{
& {\bullet_1} \ar[dl] & & {\bullet_3} \ar[dl]_{\alpha} \\
{\bullet_4} \ar[rr]^{\alpha'}
& & {\bullet_0} \ar[ul] \ar[dl]^{\beta'} \ar[dr]_{\beta} \\
& {\bullet_2} \ar[ul] & & {\bullet_5} \ar[uu]
}
& &
\xymatrix@=1pc{
{\bullet_5} \ar[dr]^{\alpha} & & {\bullet_1} \ar[dl]_{\alpha'} \\
& {\bullet_0} \ar[dl]^{\beta} \ar[rr]_{\beta'}
& & {\bullet_3} \ar[ul] \ar[dl] \\
{\bullet_4} \ar[uu] & & {\bullet_2} \ar[ul]
}
\end{align*}
then both algebra mutations $\mu^-_0(\gL)$ and $\mu^+_0(\gL)$ are defined.

Indeed, let $p=\gamma_1\gamma_2 \dots \gamma_r$ be a non-zero path starting
at $0$ written as a sequence of arrows. If $\gamma_1 \neq \beta$, then
the composition $\alpha \cdot p$ is not zero, whereas otherwise the composition
$\alpha' \cdot p$ is not zero, hence $\mu^-_0(\gL)$ is defined. Similarly,
if $p=\gamma_1 \dots \gamma_r$ is a non-zero path ending at $0$, then
composition $p \cdot \beta$ is not zero if $\gamma_r \neq \alpha$, and
otherwise $p \cdot \beta'$ is not zero, hence $\mu^+_0(\gL)$ is defined as well.
\end{example}

\subsection{Good mutations in Types III and IV}

These are given in Tables~\ref{t:mutIII} and~\ref{t:mutIV}.
Table~\ref{t:mutIII} is computed in a similar way to
Tables~\ref{t:mutI} and~\ref{t:mutII}. In Table~\ref{t:mutIV}, the
dotted lines indicate the central cycle, and the two vertices at the
sides may be identified (for the right quivers in IV.2a and IV.2b,
these identifications lead to the left quivers of III.1 and III.2). The
proof that all the mutations listed in that table are good relies on
the lemmas below.

Mutations at vertices on the central cycle are discussed in
Lemmas~\ref{l:IV0c}, \ref{l:IV1c} and~\ref{l:IV2c}, whereas mutations at
the spikes are discussed in Lemmas~\ref{l:IV0s} and~\ref{l:IV1s}. The
moves IV.1a and IV.1b in Table~\ref{t:mutIV} follow from
Corollary~\ref{c:IV0}. The moves IV.2a and IV.2b follow
from Corollary~\ref{c:IV1}. Lemma~\ref{l:IV2c} implies
that there are no additional good mutations involving Type IV quivers.

\begin{table}
\[
\begin{array}{|c|cc|cc|c|}
\hline
\text{III.1} &
\begin{array}{c}
\xymatrix@=1pc{
& {\circ} \ar[dl] & & {Q'} \ar[dl] \\
{Q''} \ar[dr] & & {\bullet} \ar[ul] \\
& {\circ} \ar[ur]
}
\end{array}
& \mu^-_{\bullet} &
\begin{array}{c}
\xymatrix@=1pc{
& {\circ} \ar[dl] \ar[dr] & & {Q'} \ar[ll] \\
{Q''} \ar[dr] & & {\bullet} \ar[dl] \ar[ur] \\
& {\circ} \ar[uu]
}
\end{array}
& \mu^+_{\bullet}
& \text{good}
\\ \hline
%
\text{III.2} &
\begin{array}{c}
\xymatrix@=1pc{
& {\circ} \ar[dl] \\
{Q''} \ar[dr] & & {\bullet} \ar[ul] \ar[dr] \\
& {\circ} \ar[ur] & & {Q'}
}
\end{array}
& \mu^+_{\bullet} &
\begin{array}{c}
\xymatrix@=1pc{
& {\circ} \ar[dl] \ar[dr] \\
{Q''} \ar[dr] & & {\bullet} \ar[dl] \\
& {\circ} \ar[uu] \ar[rr] & & {Q'} \ar[ul]
}
\end{array}
& \mu^-_{\bullet}
& \text{good}
\\ \hline
%
\text{III.3} &
\begin{array}{c}
\xymatrix@=1pc{
& {\circ} \ar[dl] & & {Q'} \ar[dl] \\
{Q''} \ar[dr] & & {\bullet} \ar[ul] \ar[dr] \\
& {\circ} \ar[ur] & & {Q'''} \ar[uu]
}
\end{array}
& \mu^-_{\bullet}, \mu^+_{\bullet} &
\begin{array}{c}
\xymatrix@=1pc{
& {\circ} \ar[dl] \ar[dr] & & {Q'} \ar[ll] \\
{Q''} \ar[dr] & & {\bullet} \ar[dl] \ar[ur] \\
& {\circ} \ar[uu] \ar[rr] & & {Q'''} \ar[ul]
}
\end{array}
& \text{none}
& \text{bad}
\\ \hline
\end{array}
\]
\caption{Mutations involving Type III quivers.} \label{t:mutIII}
\end{table}

\begin{table}
\[
\begin{array}{|c|cc|cc|}
\hline
\text{IV.1a} &
\begin{array}{c}
\xymatrix@=0.3pc{
& & {\bullet} \ar[dll] \\
{\circ} \ar[dd] & & & & {\circ} \ar[ull] \ar[drr] \\
& & & & & & {Q} \ar[dll] \\
{\circ} \ar@/_0.9pc/@{.}[rrrr] & & & & {\circ} \ar[uu] \\
&
}
\end{array}
& \mu^-_{\bullet} &
\begin{array}{c}
\xymatrix@=0.3pc{
& & {\bullet} \ar[ddr] \\ \\
& {\circ} \ar[uur] \ar[ddl] & & {\circ} \ar[ll] \ar[rr]
& & {Q} \ar[ddl] \\ \\
{\circ} \ar@/_0.9pc/@{.}[rrrr] & & & & {\circ} \ar[uul] \\
&
}
\end{array}
& \mu^+_{\bullet}
\\ \hline
%
\text{IV.1b} &
\begin{array}{c}
\xymatrix@=0.3pc{
& & & & {\bullet} \ar[dll] \\
& & {\circ} \ar[dd] & & & & {\circ} \ar[ull] \\
{Q} \ar[urr] \\
& & {\circ} \ar[ull] \ar@/_0.9pc/@{.}[rrrr] & & & & {\circ} \ar[uu] \\
&
}
\end{array}
& \mu^+_{\bullet} &
\begin{array}{c}
\xymatrix@=0.3pc{
& & & {\bullet} \ar[ddr] \\ \\
{Q} \ar[rr] & & {\circ} \ar[uur] \ar[ddl] & & {\circ} \ar[ll] \\ \\
& {\circ} \ar[uul] \ar@/_0.9pc/@{.}[rrrr] & & & & {\circ} \ar[uul] \\
&
}
\end{array}
& \mu^-_{\bullet}
\\ \hline
%
\text{IV.2a} &
\begin{array}{c}
\xymatrix@=0.3pc{
{Q'} \ar[drr] \\
& & {\bullet} \ar[dll] \\
{\circ} \ar[uu] \ar@/_0.9pc/@{.}[ddrrrr]
& & & & {\circ} \ar[ull] \ar[drr] \\
& & & & & & {Q''} \ar[dll] \\
& & & & {\circ} \ar[uu] \\
&
}
\end{array}
& \mu^-_{\bullet} &
\begin{array}{c}
\xymatrix@=0.3pc{
{Q'} & & {\bullet} \ar[ll] \ar[ddr] \\ \\
& {\circ} \ar[uur] \ar@/_0.9pc/@{.}[ddrrr] & & {\circ} \ar[ll] \ar[rr]
& & {Q''} \ar[ddl] \\ \\
& & & & {\circ} \ar[uul]
}
\end{array}
& \mu^+_{\bullet}
\\ \hline
%
\text{IV.2b} &
\begin{array}{c}
\xymatrix@=0.3pc{
& & & & & & {Q'} \ar[dd] \\
& & & & {\bullet} \ar[dll] \ar[urr] \\
& & {\circ} \ar[dd] & & & & {\circ} \ar[ull] \\
{Q''} \ar[urr] \\
& & {\circ} \ar[ull] \ar@/_0.9pc/@{.}[uurrrr] \\
&
}
\end{array}
& \mu^+_{\bullet} &
\begin{array}{c}
\xymatrix@=0.3pc{
& & & {\bullet} \ar[ddr] & & {Q'} \ar[ll] \\ \\
{Q''} \ar[rr] & & {\circ} \ar[uur] \ar[ddl] & & {\circ} \ar[ll] \\ \\
& {\circ} \ar[uul] \ar@/_0.9pc/@{.}[uurrr]
}
\end{array}
& \mu^-_{\bullet}
\\ \hline
\end{array}
\]
\caption{Good mutations involving Type IV quivers.} \label{t:mutIV}
\end{table}

\begin{lemma} \label{l:IV0c}
Let $m \geq 2$ and consider a cluster-tilted algebra $\gL$ of Type IV
with the quiver
\begin{equation} \label{e:IV0c}
\begin{array}{c}
\xymatrix@=0.5pc{
& & & & {\bullet_0} \ar[dll] \\
& & {\bullet_1} \ar[dd] & & & & {\bullet_m} \ar[ull] \ar@{-->}[drr] \\
{Q_+} \ar@{-->}[urr] & & & & & & & & {Q_-} \ar@{-->}[dll] \\
& & {\bullet_2} \ar@{-->}[ull] \ar@/_1pc/@{.}[rrrr]
& & & & {\bullet} \ar[uu] \\
&
}
\end{array}
\end{equation}
having a central cycle $0, 1, \dots, m$ and optional spikes $Q_-$ and
$Q_+$ (which coincide when $m=2$). Then:
\begin{enumerate}
\renewcommand{\theenumi}{\alph{enumi}}
\renewcommand{\labelenumi}{(\theenumi)}
\item
$\mu^-_0(\gL)$ is defined if and only if the spike $Q_-$ is present.

\item
$\mu^+_0(\gL)$ is defined if and only if the spike $Q_+$ is present.
\end{enumerate}
\end{lemma}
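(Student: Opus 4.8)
The plan is to apply the combinatorial criterion of Proposition~\ref{p:critmut} directly to the quiver~\eqref{e:IV0c}, using the explicit zero and commutativity relations of type IV recorded in Section~\ref{sec:ctaAD}. The two assertions (a) and (b) are mirror images of one another under reversing the orientation of all arrows, so I would prove (a) in full and obtain (b) by the symmetric argument; the boundary case $m=2$, where $Q_+$ and $Q_-$ coincide, will fall out of the same computation. The first step is to record the local data at the vertex $0$: the only arrow ending at $0$ is the central-cycle arrow $m\to 0$, and the only arrow starting at $0$ is $0\to 1$ (the spikes $Q_+$ and $Q_-$ sit at the edges $1\to 2$ and $(m-1)\to m$, so neither is incident to $0$). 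Hence by Proposition~\ref{p:critmut}(a), $\mu^-_0(\gL)$ is defined if and only if for every non-zero path $0\rightsquigarrow i$ the composition $m\to 0 \rightsquigarrow i$ is again non-zero.

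The second step is to enumerate the non-zero paths starting at $0$. I claim these are exactly the arcs $0\to 1\to\dots\to k$ along the central cycle with $0\le k\le m-1$. Indeed, any attempt to leave the cycle into a spike root requires a subpath of the forbidden form $i\to(i+1)\to c$, which is one of the type IV zero relations, while continuing past $m-1$ produces $0\to 1\to\dots\to m$, the ``once around'' zero relation attached to the non-spike edge $m\to 0$. Thus every non-zero path out of $0$ stays on the cycle and stops at the latest at $m-1$. I would stress that this enumeration is insensitive to whether there are additional spikes at the intermediate positions $2,\dots,m-2$, since entering any such spike is blocked by the same type of zero relation.

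It then remains to examine the compositions $m\to 0\to 1\to\dots\to k$. For $k\le m-2$ this is a cycle path that does not run all the way around, so it cannot contain any ``once around'' zero relation and is not of the shape of a spike relation; hence it is non-zero. The decisive case is $k=m-1$: the composition $m\to 0\to 1\to\dots\to(m-1)$ is precisely the path running once around the cycle from $m$ to $m-1$, i.e.\ the long path associated with the edge $(m-1)\to m$. By the type IV relations this long path is zero exactly when that edge carries no spike, and equals the non-zero detour $m\to c_-\to(m-1)$ exactly when $Q_-$ is present. Therefore the criterion of Proposition~\ref{p:critmut}(a) holds for all non-zero paths out of $0$ if and only if $Q_-$ is present, proving (a). I expect the genuinely delicate step to be exactly this identification of the critical composition with the long cycle relation at $(m-1)\to m$, together with the verification that spikes elsewhere on the cycle neither create extra non-zero paths out of $0$ nor affect the (non-)vanishing of this particular long path.

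Assertion (b) follows by the dual argument, now using Proposition~\ref{p:critmut}(b): the non-zero paths ending at $0$ are the arcs $k\to\dots\to m\to 0$ with $2\le k\le m$, and the binding case is the composition $2\to 3\to\dots\to m\to 0\to 1$, the long path associated with the edge $1\to 2$, which is non-zero precisely when the spike $Q_+$ is present. Finally, when $m=2$ the edges $1\to 2$ and $(m-1)\to m$ coincide, so $Q_+=Q_-$ and both critical compositions reduce to $2\to 0\to 1$; thus (a) and (b) both amount to the presence of the single spike, in agreement with the statement.
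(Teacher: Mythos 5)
Your proposal is correct and follows essentially the same route as the paper's proof: both apply the criterion of Proposition~\ref{p:critmut} at the vertex $0$ (where $m\to 0$ is the unique incoming and $0\to 1$ the unique outgoing arrow), reduce to the single critical composition $m,0,1,\dots,m-1$, and identify it as the length-$m$ path associated with the edge $(m-1)\to m$, which vanishes precisely when $Q_-$ is absent and equals the non-zero detour $m,v_-,m-1$ otherwise. The only difference is that you spell out the enumeration of the non-zero paths starting at $0$ (spike entry blocked by the relations $i_j,i_j+1,c_j$, and the once-around path at the spike-free edge $m\to 0$ being zero), which the paper leaves implicit in the phrase ``this holds for all such paths of length smaller than $m-1$.''
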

\begin{proof}
We prove only the first assertion, as the proof of the second is
similar.

We use the criterion of Proposition~\ref{p:critmut}. The negative mutation
$\mu^-_0(\gL)$ is defined if and only if the composition of the arrow $m
\to 0$ with any non-zero path starting at $0$ is not zero. This holds for
all such paths of length smaller than $m-1$, so we only need to
consider the path $0, 1, \dots, m-1$. Now, the composition $m, 0, 1,
\dots, m-1$ vanishes if $Q_-$ is not present, and otherwise equals the
(non-zero) path $m, v_-, m-1$ where $v_-$ denotes the root of $Q_-$.
\end{proof}

\begin{lemma} \label{l:IV0s}
Let $m \geq 3$ and consider a cluster-tilted algebra $\gL'$ of Type IV
with the quiver
\begin{equation} \label{e:IV0s}
\begin{array}{c}
\xymatrix@=0.5pc{
& & & {\bullet_0} \ar[ddr] \\ \\
{Q_+} \ar@{-->}[rr] & & {\bullet_1} \ar[uur] \ar[ddl]
& & {\bullet_m} \ar[ll] \ar@{-->}[rr] & & {Q_-} \ar@{-->}[ddl] \\ \\
& {\bullet_2} \ar@{-->}[uul] \ar@/_1pc/@{.}[rrrr] & & & &
{\bullet} \ar[uul] \\
&
}
\end{array}
\end{equation}
having a central cycle $1, 2, \dots, m$ and optional spikes $Q_-$ and $Q_+$.
Then:
\begin{enumerate}
\renewcommand{\theenumi}{\alph{enumi}}
\renewcommand{\labelenumi}{(\theenumi)}
\item
$\mu^-_0(\gL')$ is defined if and only if the spike $Q_-$ is not
present.

\item
$\mu^+_0(\gL')$ is defined if and only if the spike $Q_+$ is not
present.
\end{enumerate}
\end{lemma}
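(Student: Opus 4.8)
The plan is to follow the strategy of the proof of Lemma~\ref{l:IV0c}, applying the combinatorial criterion of Proposition~\ref{p:critmut} to the spike apex $0$, whose only incoming arrow is $1 \to 0$ and whose only outgoing arrow is $0 \to m$. Since the two assertions are symmetric---interchanging the roles of $Q_-$ and $Q_+$, of the arrows into and out of $0$, and of negative and positive mutation---I would prove only (a) and remark that (b) follows by the dual argument, exactly as in Lemma~\ref{l:IV0c}.

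For (a), Proposition~\ref{p:critmut} says that $\mu^-_0(\gL)$ is defined precisely when, for every non-zero path $0 \rightsquigarrow i$ starting at $0$, the composition $1, 0 \rightsquigarrow i$ with the unique arrow $1 \to 0$ is again non-zero. Every non-zero path out of $0$ begins with $0, m$, and from $m$ the only arrows are $m \to 1$ and, if $Q_-$ is present, $m \to v_-$, where $v_-$ denotes the root of $Q_-$. The continuation $0, m, 1$ vanishes by the spike relation at $0$, so when $Q_-$ is absent the only non-zero path starting at $0$ is $0, m$; here the composition $1, 0, m$ is non-zero, because the commutativity relation at the spike forces it to equal the central path $1, 2, \dots, m$. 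Hence $\mu^-_0(\gL)$ is defined in this case.

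When $Q_-$ is present, the extra path $0, m, v_-$ appears, and the key point is to show that it is non-zero while its precomposition $1, 0, m, v_-$ with $1 \to 0$ vanishes. The first claim is immediate: the only zero relations whose middle vertex is $m$ are $0, m, 1$ and $m-1, m, v_-$, and $0, m, v_-$ is neither of these and is not rewritten to zero by any commutativity relation, so it is non-zero. For the second, I would rewrite $1, 0, m$ as the central path $1, 2, \dots, m$ via the commutativity relation, so that $1, 0, m, v_-$ becomes $1, 2, \dots, m-1, m, v_-$, which contains the zero relation $m-1, m, v_-$ and therefore vanishes. Thus $0, m, v_-$ is a non-zero path out of $0$ whose composition with $1 \to 0$ is zero, the criterion fails, and $\mu^-_0(\gL)$ is not defined. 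I expect this interplay to be the main subtlety: precomposition with $1 \to 0$ silently converts the short path $1, 0, m$ into the long central path and thereby exposes a zero relation living at the \emph{other} spike $Q_-$, much as the analysis of the single longest path was the crux in the proof of Lemma~\ref{l:IV0c}.
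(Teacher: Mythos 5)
Your proposal is correct and follows essentially the same route as the paper's own proof: both apply the criterion of Proposition~\ref{p:critmut} at the spike vertex $0$, observe that the composition $1,0,m$ equals the central path $1,2,\dots,m$ (so $\mu^-_0(\gL)$ is defined when $Q_-$ is absent), and when $Q_-$ is present exhibit the non-zero path $0,m,v_-$ whose precomposition $1,0,m,v_-$ rewrites to $1,2,\dots,m,v_-$ and dies on the zero relation $m-1,m,v_-$, with part (b) dispatched as the dual argument. Your extra justification that $0,m,v_-$ is non-zero (by inspecting the relations through $m$) is a detail the paper merely asserts, but it does not change the argument.
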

\begin{proof}
We prove only the first assertion, as the proof of the second is
similar.

We use the criterion of Proposition~\ref{p:critmut}. The negative
mutation $\mu^-_0(\gL')$ is defined if and only if the composition of
the arrow $1 \to 0$ with any non-zero path starting at $0$ is not zero.
For the path $0, m$, the composition $1, 0, m$ equals the path $1, 2,
\dots, m$ and hence it is non-zero. This shows that $\mu^-_0(\gL')$ is
defined when $Q_-$ is not present. When $Q_-$ is present, the path $0,
m, v_-$ to the root $v_-$ of $Q_-$ is non-zero, but the composition $1,
0, m, v_-$ equals the path $1, 2, \dots, m, v_-$ which is zero since
the path $m-1, m, v_-$ vanishes
\end{proof}

\begin{cor} \label{c:IV0}
Let $\gL$ be a cluster-tilted algebra corresponding to a quiver as
in~\eqref{e:IV0c} and let $\gL'$ be the one corresponding to its mutation
at $0$, as in~\eqref{e:IV0s}. The following table lists which of the algebra
mutations at $0$ are defined for $\gL$ and $\gL'$ depending on whether
the optional spikes $Q_-$ or $Q_+$ are present (``yes'') or not (``no'').
\[
\begin{array}{c|c|c|c|c}
Q_- & Q_+ & \gL & \gL' \\
\hline
\text{yes} & \text{yes} & \mu^-, \mu^+ & \text{none} & \text{bad}\\
\text{yes} & \text{no}  & \mu^- & \mu^+ & \text{good} \\
\text{no}  & \text{yes} & \mu^+ & \mu^- & \text{good} \\
\text{no}  & \text{no}  & \text{none} & \mu^-, \mu^+ & \text{bad}
\end{array}
\]
\end{cor}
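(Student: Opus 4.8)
The plan is to read the corollary off directly from the two preceding lemmas together with the good-mutation criterion of Proposition~\ref{p:goodmut}. The algebra $\gL$ has the quiver~\eqref{e:IV0c} and $\gL'$ has the quiver~\eqref{e:IV0s}, so the first step is to check that~\eqref{e:IV0s} is indeed the Fomin--Zelevinsky mutation $\mu_0$ of~\eqref{e:IV0c} at the vertex $0$. At $0$ the only incident arrows are the incoming $m \to 0$ and the outgoing $0 \to 1$; mutating reverses them to $0 \to m$ and $1 \to 0$ and creates a single new arrow $m \to 1$ from the path $m \to 0 \to 1$, which is exactly the local configuration in~\eqref{e:IV0s}, where the central cycle has shrunk to $1, 2, \dots, m$ and the vertex $0$ now forms a spike. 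This identification is what lets us invoke Proposition~\ref{p:goodmut} with $Q$ the quiver~\eqref{e:IV0c} and $\mu_0(Q)$ the quiver~\eqref{e:IV0s}.

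Next I would fill in the two middle columns of the table by case analysis over the presence or absence of the optional spikes $Q_-$ and $Q_+$. Lemma~\ref{l:IV0c} records which mutations of $\gL$ at $0$ are defined: $\mu^-_0(\gL)$ precisely when $Q_-$ is present, and $\mu^+_0(\gL)$ precisely when $Q_+$ is present. Symmetrically, Lemma~\ref{l:IV0s} gives the defined mutations for $\gL'$: here $\mu^-_0(\gL')$ is defined precisely when $Q_-$ is \emph{absent}, and $\mu^+_0(\gL')$ precisely when $Q_+$ is \emph{absent}. Substituting the four combinations yields the entries ``$\mu^-, \mu^+$'', ``$\mu^-$'', ``$\mu^+$'', ``none'' for $\gL$ and the complementary entries for $\gL'$, exactly as in the table.

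Finally I would determine the last column. By the definition of good and bad mutations through Proposition~\ref{p:goodmut}, the quiver mutation at $0$ is good if and only if one of the two matched pairs $\bigl(\mu^-_0(\gL),\mu^+_0(\gL')\bigr)$ or $\bigl(\mu^+_0(\gL),\mu^-_0(\gL')\bigr)$ consists of two defined algebra mutations. When $Q_-$ is present and $Q_+$ absent the pair $\bigl(\mu^-_0(\gL),\mu^+_0(\gL')\bigr)$ is defined, and when $Q_-$ is absent and $Q_+$ present the pair $\bigl(\mu^+_0(\gL),\mu^-_0(\gL')\bigr)$ is defined; both give a good mutation. In the two remaining cases one of $\gL$, $\gL'$ has no defined mutation at $0$, so neither pair can be fully defined and the mutation is bad, matching the table. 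There is no real obstacle here: the argument is pure bookkeeping over the four cases, and the only point requiring a moment's care is the verification that~\eqref{e:IV0s} is genuinely $\mu_0$ of~\eqref{e:IV0c}, which is needed before Proposition~\ref{p:goodmut} can be applied.
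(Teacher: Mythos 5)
Your proposal is correct and follows exactly the route the paper intends: the corollary is stated without a separate proof precisely because it is immediate from Lemma~\ref{l:IV0c} (mutations defined for $\gL$ iff the respective spike is present) and Lemma~\ref{l:IV0s} (mutations defined for $\gL'$ iff the respective spike is absent), combined with the good/bad criterion of Proposition~\ref{p:goodmut}. Your added verification that the quiver~\eqref{e:IV0s} really is $\mu_0$ of~\eqref{e:IV0c} (reversing $m \to 0$, $0 \to 1$ and creating $m \to 1$, valid since $m \geq 3$ so no $2$-cycle cancellation occurs) is a sensible explicit check of a step the paper leaves tacit.
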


\begin{lemma} \label{l:IV1c}
Let $m \geq 2$ and consider cluster-tilted algebras $\gL_-$ and $\gL_+$
of Type IV with the following quivers
\begin{align} \label{e:IV1c}
\begin{array}{c}
\xymatrix@=0.5pc{
{Q_0} \ar[drr] \\
& & {\bullet_0} \ar[dll] \\
{\bullet_1} \ar[uu] \ar@/_1pc/@{.}[ddrrrr]
& & & & {\bullet_m} \ar[ull] \ar@{-->}[drr] \\
& & & & & & {Q_-} \ar@{-->}[dll] \\
& & & & {\bullet} \ar[uu]
}
\end{array}
& &
\begin{array}{c}
\xymatrix@=0.5pc{
& & & & & & {Q_0} \ar[dd] \\
& & & & {\bullet_0} \ar[dll] \ar[urr] \\
& & {\bullet_1} \ar[dd] & & & & {\bullet_m} \ar[ull] \\
{Q_+} \ar@{-->}[urr] \\
& & {\bullet_2} \ar@{-->}[ull] \ar@/_1pc/@{.}[uurrrr]
}
\end{array}
\end{align}
having a central cycle $0, 1, \dots, m$ and optional spikes $Q_-$ or $Q_+$.
Then:
\begin{enumerate}
\renewcommand{\theenumi}{\alph{enumi}}
\renewcommand{\labelenumi}{(\theenumi)}
\item
$\mu^+_0(\gL_-)$ is never defined;

\item
$\mu^-_0(\gL_-)$ is defined if and only if the spike $Q_-$ is present;

\item
$\mu^-_0(\gL_+)$ is never defined;

\item
$\mu^+_0(\gL_+)$ is defined if and only if the spike $Q_+$ is present.
\end{enumerate}
\end{lemma}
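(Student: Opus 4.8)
The plan is to prove all four assertions directly from the combinatorial criterion of Proposition~\ref{p:critmut}, using the type~IV relations from Section~\ref{sec:ctaAD}. As in the computations feeding Tables~\ref{t:mutI}--\ref{t:mutIII}, it suffices to take each attached rooted quiver of type $A$ to be a single vertex, since the gluing introduces no new relations; I write $c_0$ for the root of the permanent spike $Q_0$ and $c_\pm$ for the roots of $Q_\pm$. Reading off~\eqref{e:IV1c}, in $\gL_-$ the spike $Q_0$ sits on the edge $0 \to 1$ (the $3$-cycle $0 \to 1 \to c_0 \to 0$, with monomial relations $0,1,c_0$ and $c_0,0,1$) and the optional spike $Q_-$ on the edge $(m-1) \to m$; dually, in $\gL_+$ the spike $Q_0$ sits on the edge $m \to 0$ (relations $m,0,c_0$ and $c_0,m,0$) and the optional spike $Q_+$ on the edge $1 \to 2$. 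In both cases the central cycle has length $m+1$, so for every spike-free edge the corresponding ``long'' path of length $m$ around the cycle is a zero relation, whereas for a spiked edge it is identified, through the commutativity relation, with the length-$2$ spike bypass and is therefore non-zero. This single dichotomy drives everything.

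Assertions (a) and (c) are immediate and independent of $Q_\pm$. In $\gL_-$ the vertex $0$ has the unique outgoing arrow $0 \to 1$, while $c_0 \to 0$ is a non-zero path ending at $0$; its only prolongation $c_0,0,1$ is a spike relation and vanishes, so Proposition~\ref{p:critmut}(b) shows $\mu^+_0(\gL_-)$ is never defined. The mirror situation holds in $\gL_+$: now $0$ has the unique incoming arrow $m \to 0$, and prepending it to the outgoing spike arrow $0 \to c_0$ produces the spike relation $m,0,c_0 = 0$, so by Proposition~\ref{p:critmut}(a) the mutation $\mu^-_0(\gL_+)$ is never defined.

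For assertion (b) I first enumerate the non-zero paths starting at $0$ in $\gL_-$. The first arrow must be $0 \to 1$; the branch $0,1,c_0$ dies by a spike relation, and the purely central continuation $0,1,\dots,m$ has length $m$ and is the long path of the spike-free edge $m \to 0$, hence vanishes. Since the only arrows into $c_0$ and $c_-$ force passage through one of these zero paths, no spike vertex is reachable, and the non-zero paths out of $0$ are exactly $0,1,\dots,k$ with $1 \le k \le m-1$. The arrows into $0$ are $m \to 0$ and $c_0 \to 0$; prepending $c_0 \to 0$ always meets $c_0,0,1$ and gives zero, so by Proposition~\ref{p:critmut}(a) the question reduces to whether $m,0,1,\dots,k$ is non-zero. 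For $k \le m-2$ this path is too short to contain any relation, while for the critical value $k=m-1$ it is exactly the long path of the edge $(m-1)\to m$, which by the dichotomy above is non-zero precisely when $Q_-$ is present. Hence $\mu^-_0(\gL_-)$ is defined iff $Q_-$ is present. Assertion (d) is the same analysis for $\gL_+$ (equivalently, the arrow-reversing symmetry exchanging $\mu^-$ with $\mu^+$ and $Q_-$ with $Q_+$): the non-zero paths into $0$ are the central paths $k,\dots,m,0$ with $2 \le k \le m$, appending the spike arrow $0 \to c_0$ always hits $m,0,c_0=0$, and appending $0 \to 1$ to the critical path $2,3,\dots,m,0$ gives the long path of the edge $1 \to 2$, non-zero exactly when $Q_+$ is present.

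The step that needs care is the enumeration: one must verify that, in the relevant direction, every attempt to enter a spike or the attached type-$A$ quiver is already annihilated by a length-$2$ spike relation, so that the analysis legitimately collapses to central-cycle paths governed by the one long-path relation per edge. Once this reduction is secured, the statement is merely the contrast between a spike-free edge (long path $=0$) and a spiked edge (long path $=$ non-zero bypass), exactly as in Lemmas~\ref{l:IV0c} and~\ref{l:IV0s}.
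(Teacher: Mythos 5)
Your proposal is correct and follows essentially the same route as the paper: both argue via Proposition~\ref{p:critmut} together with the explicit type~IV relations, killing entries into the spikes by the length-$2$ spike relations and reducing everything to the single dichotomy that the length-$m$ central path is zero for a spike-free edge and equals the non-zero bypass for a spiked edge (the paper handles (a), reduces (b) to the argument of Lemma~\ref{l:IV0c}, and declares (c), (d) ``similar''). Your write-up merely makes explicit the path enumeration and the mirror argument for (c) and (d) that the paper leaves to the reader.
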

\begin{proof}
We prove only the first two assertions, the proofs of the 
others are similar.
\begin{enumerate}
\renewcommand{\theenumi}{\alph{enumi}}
\item
Let $v_0$ denote the root of $Q_0$. Then the path $v_0, 0$ is non-zero
whereas $v_0, 0, 1$ is zero.

\item
Since the path $v_0, 0, 1$ is zero, the composition of the arrow $v_0 \to 0$
with any non-trivial path starting at $0$ is zero.
Therefore the negative mutation at $0$ is defined if and only if the
composition of the arrow $m \to 0$ with any non-zero path starting at $0$
is not zero, and the proof goes in the same manner as in Lemma~\ref{l:IV0c}.
\end{enumerate}
\end{proof}

\begin{lemma} \label{l:IV1s}
Let $m \geq 3$ and consider cluster-tilted algebras $\gL'_-$ and
$\gL'_+$ of Type IV with the following quivers
\begin{align} \label{e:IV1s}
\begin{array}{c}
\xymatrix@=0.5pc{
{Q_0} & & {\bullet_0} \ar[ll] \ar[ddr] \\ \\
& {\bullet_1} \ar[uur] \ar@/_1pc/@{.}[ddrrr] & & {\bullet_m} \ar[ll]
\ar@{-->}[rr] & & {Q_-} \ar@{-->}[ddl] \\ \\
& & & & {\bullet} \ar[uul]
}
\end{array}
& &
\begin{array}{c}
\xymatrix@=0.5pc{
& & & {\bullet_0} \ar[ddr] & & {Q_0} \ar[ll] \\ \\
{Q_+} \ar@{-->}[rr] & & {\bullet_1} \ar[uur] \ar[ddl]
& & {\bullet_m} \ar[ll] \\ \\
& {\bullet_2} \ar@{-->}[uul] \ar@/_1pc/@{.}[uurrr]
}
\end{array}
\end{align}
having a central cycle $1, \dots, m$ and optional spikes $Q_-$ or $Q_+$.
Then:
\begin{enumerate}
\renewcommand{\theenumi}{\alph{enumi}}
\renewcommand{\labelenumi}{(\theenumi)}
\item
$\mu^+_0(\gL'_-)$ is always defined;

\item
$\mu^-_0(\gL'_-)$ is defined if and only if the spike $Q_-$ is not
present;

\item
$\mu^-_0(\gL'_+)$ is always defined;

\item
$\mu^+_0(\gL'_+)$ is defined if and only if the spike $Q_+$ is not
present.
\end{enumerate}
\end{lemma}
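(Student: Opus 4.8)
The plan is to mimic the structure of the proof of Lemma~\ref{l:IV0s}, since the quivers in~\eqref{e:IV1s} are obtained from those in~\eqref{e:IV0s} by attaching an extra spike $Q_0$ at the vertex $0$ (whose root I will denote $v_0$). The key point introduced by the presence of $Q_0$ is that, in the left quiver of~\eqref{e:IV1s}, there is now an arrow $0 \to v_0$ leaving $0$ toward the root of $Q_0$, and a relation making the composition $1, 0, v_0$ zero (coming from the spike-triangle relations on $\{m, 0, v_0\}$ together with the central cycle). I would first record the relevant relations explicitly from the type IV description in Section~\ref{sec:ctaAD}: the two zero relations around the $0$-triangle and the fact that the long path around the central cycle from $m$ back to $1$ realizes the remaining composition.

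First I would handle assertion (a), that $\mu^+_0(\gL_-)$ is \emph{always} defined. By Proposition~\ref{p:critmut}(b), I must show that every non-zero path $p$ ending at $0$ can be extended by some arrow $0 \to j$ to a non-zero composition. The arrows leaving $0$ are $0 \to m$ (along the cycle, reversed orientation) and $0 \to v_0$. The crucial observation is that the spike $Q_0$ provides the ``safety'' arrow: any non-zero path ending at $0$ arrives either through $1 \to 0$ or through the $Q_0$-side; in each case one of the two outgoing arrows composes non-trivially. I expect this to reduce, as in Lemma~\ref{l:IV0s}, to checking finitely many short paths, using that $Q_0$ being present removes the obstruction that caused $\mu^-_0$ to fail in the spike-less situation.

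Next, assertion (b), that $\mu^-_0(\gL_-)$ is defined if and only if $Q_-$ is \emph{not} present, should follow essentially verbatim from Lemma~\ref{l:IV0s}(a) together with the observation from Lemma~\ref{l:IV1c}(b): the arrow $0 \to v_0$ into the spike $Q_0$ is killed at the first step because $1, 0, v_0 = 0$, so it contributes nothing to the criterion and the relevant computation again concerns only the composition of the incoming arrow $1 \to 0$ with the path $0, m, \ldots$ around the cycle. Thus the spike $Q_0$ does not affect the negative mutation at $0$, and the analysis of when $m, v_-, m-1$ is non-zero (equivalently whether $Q_-$ is present) carries over. The assertions (c) and (d) for $\gL_+$ are the mirror-image statements obtained by reversing all arrows, so I would simply invoke the analogous argument ``by symmetry,'' exactly as the earlier lemmas do.

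The main obstacle I anticipate is assertion (a): proving that the positive mutation is \emph{unconditionally} defined requires verifying that for \emph{every} non-zero path terminating at $0$ — not just the short ones — at least one outgoing extension survives, and one must be careful that paths entering $0$ from the $Q_0$-branch behave correctly with respect to the $0$-triangle relations. The delicate case is the longest path along the central cycle (analogous to the path $0,1,\dots,m$ in Lemma~\ref{l:IV0c}), where I must check that although $0 \to v_0$ may fail, the alternative arrow $0 \to m$ composes non-trivially precisely because the full cycle relation is now ``absorbed'' by the spike. Once this single path is disposed of, the remaining verifications are routine applications of Proposition~\ref{p:critmut} to a finite list, so by Remark in Section~\ref{sec:ctaAD} I may assume each attached rooted quiver is a single vertex.
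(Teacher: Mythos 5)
Your proposal rests on a relation that does not exist. In the left quiver of~\eqref{e:IV1s} the vertex $0$ is itself the spike vertex of the triangle on $\{1,0,m\}$ (arrows $1\to 0$, $0\to m$, $m\to 1$), and $Q_0$ is the rooted quiver of type $A$ glued at $0$; the connecting arrow $0\to v_0$ is a line of that type-$A$ part and therefore appears in \emph{no} relation --- there is no spike triangle on $\{m,0,v_0\}$ and no zero relation $1,0,v_0$. You have imported the relation structure of Lemma~\ref{l:IV1c}, where $Q_0$ really is a spike attached to a central-cycle vertex $0$ and the path $v_0,0,1$ vanishes; that is a different configuration. The error is not cosmetic: if $1,0,v_0$ were zero as you assert, then the non-zero path $0,v_0$ would compose to zero with the \emph{unique} arrow $1\to 0$ entering $0$, so by Proposition~\ref{p:critmut} the mutation $\mu^-_0(\gL_-)$ would never be defined, contradicting part (b) of the very lemma you are proving. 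The correct argument for (b) runs in the opposite direction: since $0\to v_0$ occurs in no relation, the composition of $1\to 0$ with any non-zero path whose first arrow is $0\to v_0$ is automatically non-zero, so only paths beginning with $0\to m$ need examining, and for those the analysis of Lemma~\ref{l:IV0s} applies verbatim (the composition $1,0,m$ equals the long cycle path $1,2,\dots,m$, which dies after extending by $m\to v_-$ precisely when the spike $Q_-$ is present).

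The same misreading derails your treatment of (a), which you single out as the main obstacle when in fact it is the trivial part. In $\gL_-$ the only arrow into $0$ is $1\to 0$ (no path ``arrives through the $Q_0$-side''), and the arrow $0\to v_0$ never ``fails'': because it appears in no relation, \emph{every} non-zero path ending at $0$ composes non-trivially with it, and this single observation is the paper's entire proof that $\mu^+_0(\gL_-)$ is always defined --- no case analysis of long cycle paths and no fallback to the extension $0\to m$ is needed. Your appeal to symmetry for (c) and (d) does match the paper's ``the proof of the others is similar,'' but as written the proposal both asserts a false relation and, through it, an argument that would refute parts (a) and (b) rather than establish them; it must be rebuilt on the correct relation structure of type IV, namely that all relations live on the skeleton and on triangles inside the attached type-$A$ quivers, the gluing itself introducing none.
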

\begin{proof}
We prove only the first two assertions, the proofs of the others are similar.
\begin{enumerate}
\renewcommand{\theenumi}{\alph{enumi}}
\item
Let $v_0$ denote the root of $Q_0$. Then the composition of any non-zero path
ending at $0$ with the arrow $0 \to v_0$ is not zero.

\item
Since the composition of the arrow $1 \to 0$ with any non-zero path
whose first arrow is $0 \to v_0$ is not zero, we only need to consider paths
whose first arrow $0 \to m$. The proof is then the same as in
Lemma~\ref{l:IV0s}.
\end{enumerate}
\end{proof}

\begin{cor} \label{c:IV1}
Let $\gL_-$ and $\gL_+$ be cluster-tilted algebras corresponding to quivers as
in~\eqref{e:IV1c} and let $\gL'_-$ and $\gL'_+$ be the ones corresponding to
their mutations at $0$, as in~\eqref{e:IV1s}.
The following tables list which of the algebra
mutations at $0$ are defined for $\gL_-$, $\gL'_-$, $\gL_+$ and $\gL'_+$
depending on whether the optional spikes $Q_-$ or $Q_+$ are present (``yes'')
or not (``no'').
\begin{align*}
\begin{array}{c|c|c|c}
Q_- & \gL_- & \gL'_- \\
\hline
\text{yes} & \mu^- & \mu^+ & \text{good} \\
\text{no}  & \text{none} & \mu^-, \mu^+ & \text{bad}
\end{array}
& &
\begin{array}{c|c|c|c}
Q_+ & \gL_+ & \gL'_+ \\
\hline
\text{yes} & \mu^+ & \mu^- & \text{good} \\
\text{no}  & \text{none} & \mu^-, \mu^+ & \text{bad}
\end{array}
\end{align*}
\end{cor}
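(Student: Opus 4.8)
The plan is to read the corollary directly off Lemmas~\ref{l:IV1c} and~\ref{l:IV1s}, and then use the good/bad criterion of Proposition~\ref{p:goodmut} to fill in the last column. First I would fix the bookkeeping: the quivers in~\eqref{e:IV1c} are exactly the ones of $\gL_-$ and $\gL_+$, while those in~\eqref{e:IV1s} are their mutations at the vertex $0$, i.e.\ the algebras $\gL'_-$ and $\gL'_+$. Hence Lemma~\ref{l:IV1c} governs the $\gL_\pm$ columns of the tables and Lemma~\ref{l:IV1s} governs the $\gL'_\pm$ columns (the latter lemma states its conclusions with the symbols $\gL_\pm$, but applied to the quivers~\eqref{e:IV1s} these are precisely our $\gL'_\pm$). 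For the first table, controlled by the spike $Q_-$, this gives: $\mu^+_0(\gL_-)$ is never defined and $\mu^-_0(\gL_-)$ is defined iff $Q_-$ is present (Lemma~\ref{l:IV1c}(a),(b)); $\mu^+_0(\gL'_-)$ is always defined and $\mu^-_0(\gL'_-)$ is defined iff $Q_-$ is absent (Lemma~\ref{l:IV1s}(a),(b)). The second table, controlled by $Q_+$, is obtained identically from parts (c),(d) of the two lemmas. This already reproduces every ``$\mu^-$'', ``$\mu^+$'' and ``none'' entry of both tables.

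Next I would determine the good/bad verdict. By Proposition~\ref{p:goodmut}, the quiver mutation at $0$ taking~\eqref{e:IV1c} to~\eqref{e:IV1s} is good precisely when one of its two conditions holds, that is, when either $\mu^-_0(\gL_-)$ and $\mu^+_0(\gL'_-)$ are both defined, or $\mu^+_0(\gL_-)$ and $\mu^-_0(\gL'_-)$ are both defined. In the row $Q_- = \text{yes}$, the algebra mutation $\mu^-_0(\gL_-)$ is defined and $\mu^+_0(\gL'_-)$ is always defined, so the first condition is satisfied and the mutation is good. In the row $Q_- = \text{no}$, neither $\mu^-_0(\gL_-)$ nor $\mu^+_0(\gL_-)$ is defined, so no condition of Proposition~\ref{p:goodmut} can hold and the mutation is bad. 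The analysis for the second table is symmetric: in the row $Q_+ = \text{yes}$ the defining pair $\mu^+_0(\gL_+)$, $\mu^-_0(\gL'_+)$ yields good, while in the row $Q_+ = \text{no}$ no algebra mutation of $\gL_+$ is defined, giving bad.

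I expect essentially no conceptual obstacle here, since the statement is a pure combination of the two preceding lemmas; the only point requiring care is the notational matching, namely keeping track of which lemma (and which of its four parts) applies to the source algebra $\gL_\pm$ versus the target algebra $\gL'_\pm$, and recalling that a good verdict in Proposition~\ref{p:goodmut} requires one algebra mutation of the source and one of the target, \emph{of opposite signs}, to be simultaneously defined. Once this dictionary is set up, the tables are filled in by inspection.
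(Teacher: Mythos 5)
Your proposal is correct and follows exactly the route the paper intends: the corollary is stated without a separate proof precisely because it is the direct combination of Lemmas~\ref{l:IV1c} and~\ref{l:IV1s} with the good/bad criterion of Proposition~\ref{p:goodmut}, which is what you carry out. You also correctly handle the one subtlety, namely that Lemma~\ref{l:IV1s} states its conclusions in the symbols $\gL_\pm$ while these algebras are the $\gL'_\pm$ of the corollary, and that goodness requires a defined mutation of the source and one of \emph{opposite sign} for the target.
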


\begin{lemma} \label{l:IV2c}
Let $m \geq 2$ and consider a cluster-tilted algebra $\gL$ of Type IV
with the following quiver
\[
\xymatrix@=0.5pc{
{Q''} \ar[drr] & & & & {Q'} \ar[dd] \\
& & {\bullet_0} \ar[dll] \ar[urr] \\
{\bullet_1} \ar[uu] \ar@/_1pc/@{.}[rrrr] & & & & {\bullet_m} \ar[ull] \\
&
}
\]
having a central cycle $0, 1, \dots, m$. Then the algebra mutations
$\mu^-_0(\gL)$ and $\mu^+_0(\gL)$ are never defined.
\end{lemma}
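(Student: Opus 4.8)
The plan is to apply the schurian mutation criterion of Proposition~\ref{p:critmut} and to exhibit, in each case, a single non-zero path at $0$ that obstructs the mutation. First I would fix the local data at $0$: write $v'$ and $v''$ for the roots of the attached quivers $Q'$ and $Q''$, so that the arrows incident with $0$ are the two incoming arrows $m\to 0$ and $v''\to 0$ and the two outgoing arrows $0\to 1$ and $0\to v'$. From the type IV relations of Section~\ref{sec:ctaAD}, the two spikes through $0$ contribute the zero relations $v''\to 0\to 1$ (the path $c,i,i+1$ of the left spike) and $m\to 0\to v'$ (the path $i,i+1,c$ of the right spike), together with the two commutativity relations identifying $1\to v''\to 0$ with the long central path $1\to 2\to\dots\to m\to 0$, and $0\to v'\to m$ with the long central path $0\to 1\to\dots\to m$. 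Since these are commutativity relations (not zero relations), the two long central paths are non-zero.

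For the negative mutation I would take the path $p\colon 0\to v'\to m$, which is non-zero because the commutativity relation equates it with the non-zero long central path $0\to 1\to\dots\to m$. Composing $p$ with the incoming arrow $m\to 0$ gives $m\to 0\to v'\to m$, whose initial segment $m\to 0\to v'$ is a zero relation, so the composite vanishes; composing with the incoming arrow $v''\to 0$ gives $v''\to 0\to v'\to m$, and rewriting the tail $0\to v'\to m$ as $0\to 1\to\dots\to m$ via commutativity turns this into $v''\to 0\to 1\to\dots\to m$, whose initial segment $v''\to 0\to 1$ is a zero relation, so it vanishes as well. Thus there is a non-zero path starting at $0$ whose composition with \emph{every} arrow ending at $0$ is zero, and Proposition~\ref{p:critmut}(a) shows that $\mu^-_0(\gL)$ is not defined.

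The positive mutation is handled by the mirror-image computation. Here I would take the path $p\colon 1\to v''\to 0$, which by the left commutativity relation equals the non-zero long central path $1\to 2\to\dots\to m\to 0$. Composing with the outgoing arrow $0\to 1$ gives $1\to v''\to 0\to 1$, whose final segment $v''\to 0\to 1$ is a zero relation; composing with $0\to v'$ and rewriting the head $1\to v''\to 0$ as $1\to\dots\to m\to 0$ produces $1\to\dots\to m\to 0\to v'$, whose final segment $m\to 0\to v'$ is a zero relation. So a non-zero path ending at $0$ kills both arrows leaving $0$, and Proposition~\ref{p:critmut}(b) shows that $\mu^+_0(\gL)$ is not defined.

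I expect the only delicate points to be bookkeeping rather than substance: one must make sure the two chosen paths are genuinely non-zero, which is exactly what the commutativity relations guarantee, and that each reduction applies the zero relations in the correct order (rewrite the length-two spike path as the long central path, then peel off the vanishing spike segment at the other end). The argument is uniform in $m\geq 2$; in the boundary case $m=2$ the two spikes sit at adjacent vertices of the triangle, but the same four relations and the same two witnessing paths apply verbatim, so no separate treatment is needed.
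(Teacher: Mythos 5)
Your proof is correct and takes essentially the same approach as the paper: the paper's proof uses the single obstructing path $0,1,\dots,m$ (non-zero because the commutativity relation identifies it with $0,v',m$) and kills its compositions with the two incoming arrows via the spike zero relations $m,0,v'$ and $v'',0,1$, which is exactly your argument up to which representative of that path-class you name first. The only difference is cosmetic --- you write out the positive-mutation case explicitly, where the paper dismisses it as ``similar.''
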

\begin{proof}
Denote by $v'$, $v''$ the roots of $Q'$ and $Q''$, respectively, and
consider the path $0, 1, \dots, m$. It is non-zero, since it equals the path
$0, v', m$. However, its composition with the arrow $v'' \to 0$ is zero
since the path $v'', 0, 1$ vanishes, and its composition with the arrow
$m \to 0$ is zero as well, since it equals $m, 0, v', m$ and the
path $m, 0, v'$ vanishes. By Proposition~\ref{p:critmut}, the mutation
$\mu^-_0(\gL)$ is not defined.
The proof for $\mu^+_0(\gL)$ is similar.
\end{proof}

\section{Further derived equivalences in Types III and IV}
\label{sec:further-derived}

\subsection{Good double mutations in Types III and IV}

The good double mutations we consider in this section consist of two
algebra mutations. The first takes a cluster-tilted algebra $\gL$ to a
derived equivalent algebra which is not cluster-tilted, whereas the
second takes that algebra to another cluster-tilted algebra $\gL'$,
thus obtaining a derived equivalence of $\gL$ and $\gL'$. As already
demonstrated in Example~\ref{ex:D8}, these derived equivalences cannot
in general be obtained by performing sequences consisting of only good
mutations.

\begin{lemma} \label{l:IV2s}
Let $m \geq 3$ and consider a cluster-tilted algebra $\gL = \gL_{\wt{Q}}$
of Type IV with the quiver $\wt{Q}$ as in the left picture
\begin{align*}
\begin{array}{c}
\xymatrix@=0.5pc{
& & {Q''} \ar[rr] & & {Q'} \ar[ddl] \\ \\
& & & {\bullet_0} \ar[uul] \ar[ddr] \\ \\
{Q_-} \ar@{-->}[rr] & & {\bullet_1} \ar[uur] \ar[ddl]
& & {\bullet_m} \ar[ll] \ar@{-->}[rr] & & {Q_+} \ar@{-->}[ddl] \\ \\
& {\bullet_2} \ar@{-->}[uul] \ar@/_1pc/@{.}[rrrr]
& & & & {\bullet} \ar[uul]
}
\end{array}
& &
\begin{array}{c}
\xymatrix@=0.5pc{
& & {Q''} \ar[drr] & & & & {Q'} \ar[dd] \\
& & & & {\bullet_0} \ar[dll] \ar[urr] \\
& & {\bullet_1} \ar[uu] \ar[dd]
& & & & {\bullet_m} \ar[ull] \ar@{-->}[drr] \\
{Q_-} \ar@{-->}[urr] & & & & & & & & {Q_+} \ar@{-->}[dll] \\
& & {\bullet_2} \ar@{-->}[ull] \ar@/_1pc/@{.}[rrrr]
& & & & {\bullet} \ar[uu]
}
\end{array}
\end{align*}
having a central cycle $1, \dots, m$ and optional spikes $Q_-$ and
$Q_+$. Let $\mu_0(\wt{Q})$ denote the mutation of $\wt{Q}$ at the
vertex $0$, as in the right picture. Then:
\begin{enumerate}
\renewcommand{\theenumi}{\alph{enumi}}
\renewcommand{\labelenumi}{(\theenumi)}
\item
$\mu^-_0(\gL)$ is always defined and is isomorphic to the quotient of
the cluster-tilted algebra $\gL_{\mu_0(\wt{Q})}$ by the ideal generated
by the path $p$ given by
\[
p =
\begin{cases}
1, 2, \dots, m, 0 & \text{if the spike $Q_-$ is present,} \\
2, \dots, m, 0    & \text{otherwise.}
\end{cases}
\]
\item
$\mu^+_0(\gL)$ is always defined and is isomorphic to the quotient of
the cluster-tilted algebra $\gL_{\mu_0(\wt{Q})}$ by the ideal generated
by the path $p$ given by
\[
p =
\begin{cases}
0, 1, \dots, m   & \text{if the spike $Q_+$ is present,} \\
0, 1, \dots, m-1 & \text{otherwise}.
\end{cases}
\]
\end{enumerate}
\end{lemma}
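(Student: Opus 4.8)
The plan is to prove assertion~(a) in detail and to obtain assertion~(b) by the dual argument: passing to the opposite algebra reverses all arrows, interchanges $\mu^-_0$ with $\mu^+_0$ and swaps the roles of the spikes $Q_-$ and $Q_+$, so (b) is just (a) applied to $\gL^{op}$. Throughout, write $v'$ and $v''$ for the roots of $Q'$ and $Q''$.

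I would first show that $\mu^-_0(\gL)$ is defined using the schurian criterion of Proposition~\ref{p:critmut}(a). The arrows of $\wt Q$ ending at $0$ are $1\to 0$ and $v'\to 0$, and every non-zero path starting at $0$ begins with either $0\to m$ or $0\to v''$. The relations incident to $0$ are the spike relations $m,1,0=0$ and $0,m,1=0$, the central-cycle commutativity $1,0,m=1,2,\dots,m$, and the root-triangle relations $0,v'',v'=0$, $v'',v',0=0$, $v',0,v''=0$. A short case analysis on the first arrow of the path, entirely parallel to the definedness computations in Lemmas~\ref{l:IV1s} and~\ref{l:IV2c}, shows that either the composition $1,0,\dots$ or $v',0,\dots$ survives; hence $\mu^-_0(\gL)$ is always defined. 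Observe that Lemma~\ref{l:IV2c}, applied to $\mu_0(\wt Q)$ (in which $0$ has become a cycle vertex flanked by the two spikes $Q'$ and $Q''$), shows that $\mu^+_0\bigl(\gL_{\mu_0(\wt Q)}\bigr)$ is \emph{not} defined; therefore Proposition~\ref{p:goodmut} does not yield an isomorphism $\mu^-_0(\gL)\simeq\gL_{\mu_0(\wt Q)}$, and this failure is precisely what forces a quotient to appear.

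To identify the algebra, recall from the remark following Proposition~\ref{p:critmut} that, $\mu^-_0(\gL)$ being defined, the complex $T^-_0(\gL)$ is isomorphic in $\cD^b(\gL)$ to the Brenner-Butler tilting module associated with $0$; its relevant summand is $X_0=\bigl(P_0\to P_1\oplus P_{v'}\bigr)$ in degrees $-1,0$, the remaining summands being the $P_i$ with $i\neq 0$. I would first check that the Gabriel quiver of $\mu^-_0(\gL)=\End_{\cD^b(\gL)}(T^-_0)$ is exactly $\mu_0(\wt Q)$: the irreducible maps between $X_0$ and the neighbouring projectives reproduce the arrows incident to $0$ in the Fomin-Zelevinsky mutation, and since the path $p$ to be killed has length $\geq 2$, its removal does not alter the quiver, so $\mu_0(\wt Q)$ is also the quiver of $\gL_{\mu_0(\wt Q)}/(p)$. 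It then remains to compare relations.

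The main obstacle, and the real content of the lemma, is this relation comparison. Computing the compositions of maps between the summands of $T^-_0$ from the explicit differential $P_0\to P_1\oplus P_{v'}$ and the relations of $\gL$, I would verify that $\mu^-_0(\gL)$ satisfies exactly the canonical type~IV relations of $\gL_{\mu_0(\wt Q)}$ \emph{together with the single additional relation $p=0$}, and no further relation. The appearance of this extra relation is the delicate point: the path $p$ is non-zero in the cluster-tilted algebra $\gL_{\mu_0(\wt Q)}$, but when lifted to the tilting complex it factors through the connecting differential of $X_0$ and hence vanishes in $\cD^b(\gL)$; moreover its starting vertex is $1$ or $2$ according to whether $Q_-$ is present, mirroring exactly the case distinction governing how a cycle path meets a spike in Lemma~\ref{l:IV0c}(a). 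Once the quiver and the full relation set are pinned down, the isomorphism $\mu^-_0(\gL)\cong\gL_{\mu_0(\wt Q)}/(p)$ follows by matching generators and relations. As a numerical consistency check, $\mu^-_0(\gL)$ is derived equivalent to $\gL$, so its Cartan determinant agrees with the value for $\gL$ given by Theorem~\ref{thm-det-typeD}, and one verifies that this matches the determinant of the Cartan matrix of $\gL_{\mu_0(\wt Q)}/(p)$.
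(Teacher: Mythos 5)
Your proposal is correct and follows essentially the same route as the paper: you exhibit the summand $P_0 \to P_1 \oplus P_{v'}$ of $T^-_0(\gL)$, assign a homomorphism of complexes to each arrow of $\mu_0(\wt{Q})$, verify the relations of $\gL_{\mu_0(\wt{Q})}/(p)$ up to homotopy with $p$ vanishing because it factors through the differential, and your opposite-algebra derivation of (b) is a legitimate version of the ``similar proof'' the paper leaves to the reader. The only refinement worth importing from the paper's argument is that it certifies the absence of further relations by computing the full Cartan matrix of $\mu^-_0(\gL)$ via Happel's alternating-sum formula, a dimension count that your determinant-level consistency check alone would not replace, although your planned composition-by-composition verification of all Hom spaces between summands amounts to the same bookkeeping.
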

\begin{proof}
We prove only the first assertion and leave the second to the reader.
Let $\gL = \gL_{\wt{Q}}$ be the cluster-tilted algebra corresponding to
the quiver $\wt{Q}$ depicted as
\begin{center}
\includegraphics[scale=0.75]{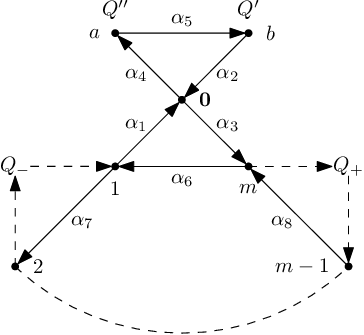}
\end{center}

It is easily seen using Proposition~\ref{p:critmut} that the negative
mutation $\mu^-_0(\gL)$ is defined. In order to describe it explicitly,
we recall that $\mu^-_0(\gL) = \End_{\cD^b(\gL)}(T^-_0(\gL))$, where
\[
T^-_0(\gL) = \bigl( P_0 \xrightarrow{(\alpha_1, \alpha_2)} (P_1 \oplus
P_b) \bigr) \oplus \bigl( \bigoplus_{i \neq 0} P_i \bigr) = L_0 \oplus
\bigl( \bigoplus_{i \neq 0} P_i \bigr) .
\]

Using an alternating sum formula of Happel \cite{Happel} we can compute
the Cartan matrix of $\mu^-_0(\gL)$ to be
$$C_{\mu^-_0(\gL)} = \arraycolsep0.4em\begin{array}[c]{c|*{5}{c}:ccc}
  & 0 & 1 & m & a & b & 2 & (m-1) & \cdots\\
\hline
0 & 1 & 1 & 1 & 0 & 1 & 1 & 1 & \cdots\\
1 & \framebox{0} & 1 & 1 & 1 & 0 & 1 & 1 & \cdots\\
m & 1 & 1 & 1 & 0 & 0 & 1 & ? & \cdots\\
a & 1 & 0 & 0 & 1 & 1 & 0 & 0 & \cdots\\
b & 0 & 0 & 1 & 0 & 1 & 0 & 0 & \cdots\\
\hdashline
2 & 1/\framebox{0} & 1/0 & 1 & 0 & 0 & 1 & 1 & \cdots\\
(m-1) & 1 & 1 & 1 & 0 & 0 & ? & 1 & \cdots\\
\vdots & \vdots & \vdots & \vdots & \vdots & \vdots & \vdots & \vdots
\end{array} $$
where $1/0$ means $1$ if $Q_-$ is present and $0$ if $Q_-$ is not present.

Now to each arrow of the following quiver we define a homomorphism of
complexes between the summands of $T^-_0(\gL)$.
\begin{center}
\includegraphics[scale=0.75]{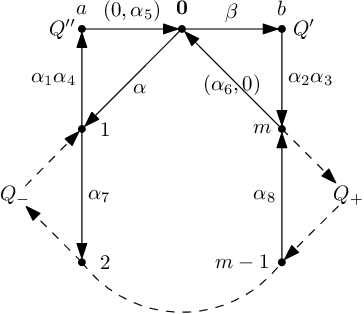}
\end{center}

First we have the embeddings $\alpha:=(\mathrm{id},0): P_1 \to L_0$ and
$\beta :=(0, \mathrm{id}): P_b \to L_0$ (in degree zero). Moreover, we
have the homomorphisms $\alpha_1\alpha_4 : P_a \to P_1$,
$\alpha_2\alpha_3 : P_m \to P_b$, $(\alpha_6,0) : L_0 \to P_m$ and
$(0,\alpha_5) : L_0 \to P_a$. All the other homomorphisms are as
before.

Now we have to show that these homomorphisms satisfy the defining
relations of the algebra $\gL_{\mu_0(\wt{Q})}/I(p)$, up to homotopy,
where $I(p)$ is the ideal generated by the path $p$ stated in the
lemma. Clearly, the concatenation of $(0, \alpha_5)$ and $\alpha$ and
the concatenation of $(\alpha_6,0)$ and $\beta$ are zero-relations. The
concatenation of $\alpha_2\alpha_3$ and $(\alpha_6,0)$ is zero as
before. It is easy to see that the two paths from vertex $0$ to vertex
$m$ are the same since $(0,\alpha_2\alpha_3)$ is homotopic to
$(\alpha_7\dots\alpha_8,0)$ (and $\alpha_7\dots\alpha_8 =
\alpha_1\alpha_3$ in $\gL$). The path from vertex $0$ to vertex $a$ is
zero since $(\alpha_1\alpha_4,0)$ is homotopic to zero. There is no
non-zero path from vertex $1$ to vertex $0$ since
$(0,\alpha_1\alpha_4\alpha_5) = 0 = (\alpha_7 \dots
\alpha_8\alpha_6,0)$. This corresponds to the path $p$ in the case if
$Q_-$ is present and is marked in the Cartan matrix by a box. If $Q_-$
is not present then the path from vertex $2$ to vertex $0$ is already
zero since $(\dots \alpha_8\alpha_6,0) = 0$ which is also marked in the
Cartan matrix above. Thus, $\mu^-_0(\gL)$ is isomorphic to the quotient
of the cluster-tilted algebra $\gL_{\mu_0(\wt{Q})}$ by the ideal
generated by the path $p$.
\end{proof}

\begin{cor} \label{c:IVdbl}
The two cluster-tilted algebras with quivers
\begin{align*}
\begin{array}{c}
\xymatrix@=0.5pc{
& & {Q''} \ar[rr] & & {Q'} \ar[ddl] \\ \\
& & & {\bullet_0} \ar[uul] \ar[ddr] \\ \\
{Q'''} \ar[rr] & & {\bullet_1} \ar[uur] \ar[ddl]
& & {\bullet_m} \ar[ll] \\ \\
& {\bullet_2} \ar[uul] \ar@/_1pc/@{.}[uurrr]
}
\end{array}
& &
\begin{array}{c}
\xymatrix@=0.5pc{
{Q'''} \ar[rr] & & {Q''} \ar[ddl] \\ \\
& {\bullet_1} \ar[uul] \ar[ddr] \\ \\
{\bullet_2} \ar[uur] \ar@/_1pc/@{.}[ddrrr]
& & {\bullet_0} \ar[ll] \ar[rr] & & {Q'} \ar[ddl] \\ \\
& & & {\bullet_m} \ar[uul]
}
\end{array}
\end{align*}
(where $Q'$, $Q''$ and $Q'''$ are rooted quivers of type $A$)
are related by a good double mutation (at the vertex $0$ and then at
$1$).
\end{cor}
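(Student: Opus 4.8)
The plan is to recognise both quivers in the statement as instances of the quiver $\wt{Q}$ of Lemma~\ref{l:IV2s}, and then to read the double mutation directly off that lemma. Write $Q_L$ and $Q_R$ for the left and right quivers of the corollary. The first step is to observe that $Q_L$ is literally the quiver $\wt{Q}$ of Lemma~\ref{l:IV2s} in the case where the spike $Q_-$ is present and equal to $Q'''$, while the spike $Q_+$ is absent. Hence Lemma~\ref{l:IV2s}(a) applies, and it tells us that $\mu^-_0(\gL_{Q_L})$ is defined.

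The second step is to identify $Q_R$. After a relabelling of its vertices which carries the spike tip of $Q_R$ (the vertex~$1$) to the tip vertex~$0$ of $\wt{Q}$, and sends the central-cycle vertices and the attached rooted quivers of type $A$ accordingly, the quiver $Q_R$ also becomes an instance of $\wt{Q}$, this time with $Q_+$ present (equal to $Q'$) and $Q_-$ absent. The positive mutation in Lemma~\ref{l:IV2s}(b) is then the mutation $\mu^+_1$ at the vertex~$1$ of $Q_R$, and the lemma guarantees that $\mu^+_1(\gL_{Q_R})$ is defined.

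The third step is the quiver identity required by the definition of a good double mutation, namely $\mu_0(Q_L)=\mu_1(Q_R)$. Because Fomin-Zelevinsky mutation is an involution, this amounts to checking $Q_R=\mu_1\bigl(\mu_0(Q_L)\bigr)$, and both sides are computed from the local mutation rule to equal the same labelled quiver $\hat{Q}$ --- the right-hand picture of Lemma~\ref{l:IV2s}, in which the former spike tip has re-entered the central cycle, whose length grows from $m$ to $m+1$. In particular the two intermediate cluster-tilted algebras coincide, $\gL_{\mu_0(Q_L)}=\gL_{\mu_1(Q_R)}=:\gL_{\hat{Q}}$.

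Finally I would combine these. By Lemma~\ref{l:IV2s}(a) we have $\mu^-_0(\gL_{Q_L})\simeq \gL_{\hat{Q}}/I(p_-)$ with $p_-$ the central-cycle path $1,2,\dots,m,0$; by Lemma~\ref{l:IV2s}(b), transported through the relabelling of the second step, $\mu^+_1(\gL_{Q_R})\simeq \gL_{\hat{Q}}/I(p_+)$, and the relabelling carries the lemma's path $0,1,\dots,m$ to the very same path $1,2,\dots,m,0$ in $\hat{Q}$. Thus $p_-=p_+$, the two quotients coincide, and $\mu^-_0(\gL_{Q_L})\simeq\mu^+_1(\gL_{Q_R})$, which is exactly the statement that $Q_L$ and $Q_R$ are related by a good double mutation at $0$ and then at $1$. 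I expect the only delicate point to be the bookkeeping: one must check that $\mu_0(Q_L)$ and $\mu_1(Q_R)$ agree as \emph{labelled} quivers and that the relabelling matches the two paths exactly, so that the ideals $I(p_-)$ and $I(p_+)$ are literally equal rather than merely isomorphic.
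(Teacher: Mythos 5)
Your proposal is correct and takes essentially the same approach as the paper: the paper's proof likewise applies Lemma~\ref{l:IV2s} to both quivers (part (a) to the left one, where $Q_-=Q'''$ is present and $Q_+$ absent, and part (b) to the suitably relabelled right one, where $Q_+=Q'$ is present and $Q_-$ absent), identifying both $\mu^-_0(\gL_L)$ and $\mu^+_1(\gL_R)$ with the quotient of the cluster-tilted algebra of the common intermediate quiver by the ideal generated by the path $1,2,\dots,m,0$. The relabelling bookkeeping you rightly flag as the delicate point is left implicit in the paper but works out exactly as you describe.
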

\begin{proof}
Denoting the left algebra by $\gL_L$ and the right one by $\gL_R$, we
see that $\mu^-_0(\gL_L) \simeq \mu^+_1(\gL_R)$, as by
Lemma~\ref{l:IV2s} these algebra mutations are isomorphic to quotient
of the cluster-tilted algebra of the quiver
\[
\xymatrix@=0.5pc{
& & & {Q''} \ar[ddr] \\ \\
{Q'''} \ar[rr] & & {\bullet_1} \ar[uur] \ar[ddl]
& & {\bullet_0} \ar[ll] \ar[rr] & & {Q'} \ar[ddl] \\ \\
& {\bullet_2} \ar[uul] \ar@/_1pc/@{.}[rrrr] & & & &
{\bullet_m} \ar[uul] \\
&
}
\]
by the ideal generated by the path $1, 2, \dots, m, 0$.
\end{proof}

There is an analogous version of Lemma~\ref{l:IV2s} for cluster-tilted
algebras in Type III, corresponding to the case where $m=2$, and the
spikes $Q_-$ and $Q_+$ coincide (and are present).
That is, there is a central cycle of length $m=2$ (hence it is
``invisible'') with all spikes present.

\begin{lemma}
Consider the cluster-tilted algebra $\gL_{\wt{Q}}$ of Type III
whose quiver $\wt{Q}$ is shown in the left picture, where $Q'$, $Q''$
and $Q'''$ are rooted quivers of type $A$.
\begin{align*}
\begin{array}{c}
\xymatrix@=0.5pc{
& & {\bullet_1} \ar[drr] & & & & {Q''} \ar[dd] \\
{Q'''} \ar[urr] & & & & {\bullet_0} \ar[dll] \ar[urr] \\
& & {\bullet_2} \ar[ull] & & & & {Q'} \ar[ull]
}
\end{array}
& &
\begin{array}{c}
\xymatrix@=0.5pc{
& & & & {Q''} \ar[dd] \\
& & {\bullet_1} \ar[urr] \ar[dd]_{\beta} \\
{Q'''} \ar[urr] & & & & {\bullet_0} \ar[ull]_{\alpha} \ar[dd] \\
& & {\bullet_2} \ar[urr]_{\gamma} \ar[ull] \\
& & & & {Q'} \ar[ull]
}
\end{array}
\end{align*}
Let $\mu_0(\wt{Q})$ denote the mutation of $\wt{Q}$ at the
vertex $0$, as in the right picture. Then:
\begin{enumerate}
\renewcommand{\theenumi}{\alph{enumi}}
\item
$\mu^-_0(\gL)$ is always defined and is isomorphic to the quotient of
the cluster-tilted algebra $\gL_{\mu_0(\wt{Q})}$ by the ideal generated
by the path $\beta \gamma$.

\item
$\mu^+_0(\gL)$ is always defined and is isomorphic to the quotient of
the cluster-tilted algebra $\gL_{\mu_0(\wt{Q})}$ by the ideal generated
by the path $\alpha \beta$.
\end{enumerate}
\end{lemma}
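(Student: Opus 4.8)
The plan is to mirror the proof of Lemma~\ref{l:IV2s}, of which this statement is essentially the degenerate instance $m=2$ in which the two spikes $Q_-$ and $Q_+$ coincide into the single rooted quiver $Q'''$. When $m=2$ the central cycle becomes invisible, so that the vertices $0,1,2$ and the root $v'''$ of $Q'''$ arrange into the characteristic $4$-cycle $1 \to 0 \to 2 \to v''' \to 1$ of type III, while the top $3$-cycle $0 \to v'' \to v' \to 0$ plays the role of the double spike carrying $Q'$ and $Q''$. Since Lemma~\ref{l:IV2s} is stated only for $m \geq 3$, I cannot simply substitute $m=2$; instead I would rerun the same computation in this degenerate configuration. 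I treat only assertion~(a), as (b) is dual.

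First I would check, using the criterion of Proposition~\ref{p:critmut}, that $\mu^-_0(\gL)$ is defined. The arrows ending at $0$ are $1 \to 0$ and $v' \to 0$, so I must verify that every nonzero path starting at $0$ — necessarily beginning with $0 \to 2$ or with $0 \to v''$ — composes nontrivially with one of these two arrows. This is a finite check against the relations of the type III algebra $\gL$ (the length-$3$ zero relations around the $4$-cycle, together with the triangle relations of the $3$-cycle and of the glued rooted quivers of type $A$), entirely parallel to the defined-ness verifications in Lemmas~\ref{l:IV0c}--\ref{l:IV2c}.

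Next, following Lemma~\ref{l:IV2s}, I would write the tilting complex explicitly as
\[
T^-_0(\gL) = \bigl( P_0 \xrightarrow{(\alpha_1,\alpha_2)} P_1 \oplus P_{v'} \bigr) \oplus \bigl( \bigoplus_{i \neq 0} P_i \bigr) = L_0 \oplus \bigl( \bigoplus_{i \neq 0} P_i \bigr),
\]
where $\alpha_1,\alpha_2$ are induced by the two arrows into $0$, and compute the Cartan matrix of $\mu^-_0(\gL) = \End_{\cD^b(\gL)}(T^-_0(\gL))$ by Happel's alternating sum formula~\cite{Happel}. I would then attach to each arrow of the mutated quiver $\mu_0(\wt{Q})$ an explicit homomorphism of complexes between the summands of $T^-_0(\gL)$ — the two embeddings $P_1 \to L_0$ and $P_{v'} \to L_0$ in degree $0$, the maps out of $L_0$ induced by the central arrows of $\mu_0(\wt{Q})$, and the maps between the remaining $P_i$ dictated by its arrows — exactly as in the diagram of complex homomorphisms used there.

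The heart of the argument, and the step I expect to be the main obstacle, is verifying that these homomorphisms satisfy, up to homotopy, precisely the defining relations of $\gL_{\mu_0(\wt{Q})}/I(\beta\gamma)$ and no others. Here $\gL_{\mu_0(\wt{Q})}$ is the cluster-tilted algebra of the type IV quiver with central $3$-cycle $0 \xrightarrow{\alpha} 1 \xrightarrow{\beta} 2 \xrightarrow{\gamma} 0$ and spikes $Q',Q'',Q'''$ on its three edges, whose commutativity relation identifies the central path $\beta\gamma = 1 \to 2 \to 0$ with the spike path $1 \to v'' \to 0$. The point is that in $\mu^-_0(\gL)$ this composite becomes null-homotopic, so the commutativity relation degenerates into the zero relation $\beta\gamma = 0$; concretely this is the vanishing recorded, in the $m \geq 3$ case, by the boxed entry of the Cartan matrix in Lemma~\ref{l:IV2s}, and it identifies the path $p = 1,2,0$ there (present-spike case) with $\beta\gamma$ here. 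All remaining type IV relations survive verbatim, so the endomorphism algebra is exactly the stated quotient. Once this homotopy bookkeeping is carried out — checking each concatenation of complex maps against the corresponding relation, and confirming the two parallel paths from $0$ to $2$ agree up to homotopy — the isomorphism $\mu^-_0(\gL) \simeq \gL_{\mu_0(\wt{Q})}/I(\beta\gamma)$ follows, and assertion~(b) is obtained dually with $p = \alpha\beta = 0,1,2$.
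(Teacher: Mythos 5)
Your proposal is correct and follows essentially the same route as the paper, which states this lemma without a separate proof precisely because it is the degenerate $m=2$, coincident-spike instance of Lemma~\ref{l:IV2s}: you rerun that proof's steps (definedness via Proposition~\ref{p:critmut}, the tilting complex $T^-_0(\gL)$, Happel's Cartan computation, and the arrow-by-arrow homotopy verification of relations) in this configuration. Your identification of the killed path $p=1,2,0$ with $\beta\gamma$ (and dually $p=0,1,2$ with $\alpha\beta$), and of $\mu_0(\wt{Q})$ as the type IV quiver with central $3$-cycle and all three spikes present, matches the paper's intended argument exactly.
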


\begin{cor} \label{c:IIIdbl}
The cluster-tilted algebras of Type III with quivers
\begin{align*}
\xymatrix@=0.5pc{
& & {\bullet_1} \ar[drr] & & & & {Q''} \ar[dd] \\
{Q'''} \ar[urr] & & & & {\bullet_0} \ar[dll] \ar[urr] \\
& & {\bullet_2} \ar[ull] & & & & {Q'} \ar[ull]
}
& &
\xymatrix@=0.5pc{
{Q''} \ar[drr] & & & & {\bullet_0} \ar[drr] \\
& & {\bullet_1} \ar[dll] \ar[urr] & & & & {Q'} \ar[dll] \\
{Q'''} \ar[uu] & & & & {\bullet_2} \ar[ull]
}
\end{align*}
(where $Q'$, $Q''$ and $Q'''$ are rooted quivers of type $A$)
are related by a good double mutation (at $0$ and then at $1$).
\end{cor}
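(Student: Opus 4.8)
The plan is to follow, in the degenerate case $m=2$, the argument used for Corollary~\ref{c:IVdbl}, replacing Lemma~\ref{l:IV2s} by the preceding lemma for type III. Write $\gL_L$ for the left-hand algebra and $\gL_R$ for the right-hand one, and observe that $\gL_L = \gL_{\wt{Q}}$ is precisely the algebra treated by the preceding lemma. First I would check the purely combinatorial fact that the Fomin--Zelevinsky mutation of the left quiver at $0$ agrees with that of the right quiver at $1$; both yield the common quiver $\mu_0(\wt{Q})$ shown in the right picture of the preceding lemma, which carries the oriented $3$-cycle $0 \xrightarrow{\alpha} 1 \xrightarrow{\beta} 2 \xrightarrow{\gamma} 0$. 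This identity $\mu_0(\text{left}) = \mu_1(\text{right})$ is exactly what is needed in order to speak of a double mutation, first at $0$ and then at $1$.

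Next I would apply part~(a) of the preceding lemma to $\gL_L$: it asserts that $\mu^-_0(\gL_L)$ is defined and is isomorphic to the quotient $\gL_{\mu_0(\wt{Q})}/I(\beta\gamma)$, where $\beta\gamma$ is the length-two path $1 \to 2 \to 0$ ending at $0$. For $\gL_R$ I would match its vertices and arrows to the template of the lemma so that its positive mutation at $1$ becomes an instance of part~(b), now with $1$ in the role of the distinguished vertex; part~(b) annihilates the length-two path issuing from the mutation vertex, which in the present labeling is $1 \to 2 \to 0$, i.e.\ again the path $\beta\gamma$. Hence $\mu^+_1(\gL_R)$ is defined and is likewise isomorphic to $\gL_{\mu_0(\wt{Q})}/I(\beta\gamma)$.

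Combining the two identifications gives $\mu^-_0(\gL_L) \simeq \gL_{\mu_0(\wt{Q})}/I(\beta\gamma) \simeq \mu^+_1(\gL_R)$. Together with the equality $\mu_0(\text{left}) = \mu_1(\text{right})$ established above, this verifies all the conditions in the definition of a good double mutation, so $\gL_L$ and $\gL_R$ are derived equivalent. The step I expect to be the main obstacle is the bookkeeping in the second paragraph: one must match the vertices of $\gL_R$ with the template of the preceding lemma and confirm that the positive mutation at $1$ genuinely annihilates the path $\beta\gamma$ rather than the a priori different path $\alpha\beta$ furnished by part~(b) in its native labeling. The point is that the negative mutation at $0$ kills the length-two path \emph{ending} at $0$, while the positive mutation at $1$ kills the length-two path \emph{starting} at $1$, and in the $3$-cycle of $\mu_0(\wt{Q})$ these coincide as the single path $\beta\gamma$; this is the exact analogue of the single path $1,2,\dots,m,0$ serving both mutations in the proof of Corollary~\ref{c:IVdbl}.
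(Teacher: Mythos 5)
Your proof is correct and is exactly the argument the paper intends (the corollary is left without an explicit proof, its verification mirroring that of Corollary~\ref{c:IVdbl}): one identifies $\mu^-_0(\gL_L)$ and $\mu^+_1(\gL_R)$ via the two parts of the preceding type~III lemma with the quotient of $\gL_{\mu_0(\wt{Q})}$ by the ideal generated by the single path $1 \to 2 \to 0$, i.e.\ $\beta\gamma$, which on the $3$-cycle is simultaneously the length-two path ending at $0$ and the one starting at $1$. Your relabeling check for applying part~(b) to the right-hand algebra is precisely the needed bookkeeping and carries through as you describe.
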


\subsection{Self-injective cluster-tilted algebras}
The self-injective cluster-tilted algebras have been determined by
Ringel in~\cite{Ringel}. They are all of Dynkin type $D_n$, $n \geq 3$.
Fixing the number $n$ of vertices, there are one or two such algebras
according to whether $n$ is odd or even. Namely, there is the algebra
corresponding to the cycle of length $n$ without spikes, and when
$n=2m$ is even, there is also the one of Type IV with parameter
sequence $\bigl((1,0,0), (1,0,0), \dots, (1,0,0)\bigr)$ of length $m$.

The following lemma shows that these two algebras are in fact derived
equivalent. Note that this could also be deduced from the derived
equivalence classification of self-injective algebras of finite
representation type~\cite{Asashiba}.

\begin{lemma} \label{l:selfinj}
Let $m \geq 3$. Then the cluster-tilted algebra of Type IV with a
central cycle of length $2m$ without any spike is derived equivalent to
that in Type IV with parameter sequence $\bigl((1,0,0), (1,0,0), \dots,
(1,0,0)\bigr)$ of length $m$.
\end{lemma}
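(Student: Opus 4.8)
The plan is to prove the derived equivalence by exhibiting an explicit tilting complex, since, as noted in Example~\ref{ex:selfinj}, no single algebra mutation $\mu^\pm_k$ is defined on either algebra; thus the mutation machinery of Section~\ref{sec:goodmut} and the good double mutations of Lemma~\ref{l:IV2s} are unavailable here. Write $A$ for the cluster-tilted algebra of the cycle of length $2m$ without spikes: this is the cyclic Nakayama algebra with vertices $\bZ/2m$, arrows $a_i \colon i \to i+1$, and all paths of length $2m-1$ equal to zero, so that each indecomposable projective $P_i$ has Loewy length $2m-1$ with composition factors $S_i, S_{i+1}, \dots, S_{i-1}$. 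Write $B$ for the target algebra of type IV with parameter sequence $\bigl((1,0,0),\dots,(1,0,0)\bigr)$ of length $m$. Both are self-injective by~\cite{Ringel}, and each has $2m$ simple modules.

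The crucial observation is that the quiver of $A$ is an \emph{even} cycle, hence bipartite, so its even vertices form an independent set (no two are joined by an arrow). I would therefore tilt simultaneously at all even vertices, using at each the two-term complex that appears as the nontrivial summand of the negative mutation $T^-_k(A)$ of Section~\ref{sec:prelim}. Concretely, set
\[
T = \Bigl( \bigoplus_{k \text{ even}} X_k \Bigr) \oplus \Bigl( \bigoplus_{j \text{ odd}} P_j \Bigr),
\qquad
X_k = \bigl( P_k \xrightarrow{\,\cdot a_{k-1}\,} P_{k-1} \bigr),
\]
where $P_k$ sits in degree $-1$ and $P_{k-1}$ in degree $0$. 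Although the negative mutation at a single vertex $k$ is not defined, performing all these flips at once on the independent set of even vertices does produce a tilting complex. Note $T$ has exactly $2m$ indecomposable summands ($m$ flips and $m$ stalks), matching the number of simples of $B$.

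Verifying that $T$ is a tilting complex splits into the two conditions of Definition~\ref{tilting-complex}. Generation is immediate: each $P_j$ with $j$ odd is a summand of $T$, and for even $k$ the stalk $P_k$ lies in the thick subcategory generated by $X_k$ together with the odd stalk $P_{k-1}$; hence the thick closure of $T$ contains all $P_i$ and equals $\per A$. The substantial step is exceptionality, i.e.\ $\Hom_{\cD^b(A)}(T,T[n])=0$ for $n\neq 0$. Since every summand is concentrated in degrees $-1$ and $0$, only $n=\pm 1$ can contribute, and I would check the vanishing summand by summand directly from the Nakayama structure of $A$, using that $\Hom_A(P_a,P_b)$ is spanned by the unique (possibly zero) path $b \rightsquigarrow a$ and that homotopies $\Hom_A(P_{k-1},P_k)$ vanish as they would require a path of length $2m-1$. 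A short computation already gives $\End_{\cD^b(A)}(X_k)\simeq K$. The non-adjacency of the even vertices is precisely what forces the degree $\pm 1$ terms to cancel against homotopies, so this is where the evenness of $2m$ enters, and this bounded but delicate bookkeeping over the cases $\Hom(X_k,X_{k'}[\pm 1])$, $\Hom(X_k,P_j[\pm 1])$ and $\Hom(P_j,X_k[\pm 1])$ is the main obstacle.

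Finally, I would identify $\End_{\cD^b(A)}(T)$ with $B$. Computing the spaces $\Hom_{\cD^b(A)}(T_i,T_{i'})$ from the same explicit description yields both the quiver and the relations of the endomorphism algebra: the odd stalks and even flips assemble into a central $m$-cycle with $m$ attached oriented $3$-cycles, and the zero and commutativity relations forced by the chain-map and homotopy calculations match exactly those defining type IV with parameters $\bigl((1,0,0),\dots,(1,0,0)\bigr)$. As a consistency check one can compute the Cartan matrix of $\End_{\cD^b(A)}(T)$ via Happel's alternating-sum formula~\cite{Happel}, exactly as in the proof of Lemma~\ref{l:IV2s}, and compare it with $C_B$. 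By Rickard's Theorem~\ref{t:Rickard} this gives the desired derived equivalence; the same conclusion can alternatively be deduced from Asashiba's classification~\cite{Asashiba}.
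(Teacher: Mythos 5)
Your proposal is correct and is essentially the paper's own proof: the paper exhibits exactly the same tilting complex $T = \bigl( \bigoplus_{i=1}^m ( P_{2i} \xrightarrow{\alpha_{2i-1}} P_{2i-1} ) \bigr) \oplus \bigl( \bigoplus_{i=1}^m P_{2i-1} \bigr)$, obtained by flipping simultaneously at the alternate (even) vertices, identifies its endomorphism ring with the type IV algebra with all spikes, and likewise leaves the homotopy bookkeeping to the reader while noting the alternative deduction from Asashiba's classification. Only a trivial slip: with Loewy length $2m-1$ the composition factors of $P_i$ are $S_i, S_{i+1}, \dots, S_{i-2}$ (the simple $S_{i-1}$ does not occur), not up to $S_{i-1}$ --- this does not affect your computations, which correctly use that $\Hom_A(P_{k-1},P_k)=0$.
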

\begin{proof}
Let $\gL$ be the cluster-tilted algebra corresponding to a cycle of
length $2m$ as in the left picture.
\begin{align*}
\includegraphics[scale=0.75]{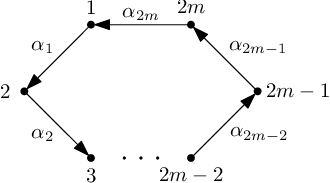}
& &
\includegraphics[scale=0.75]{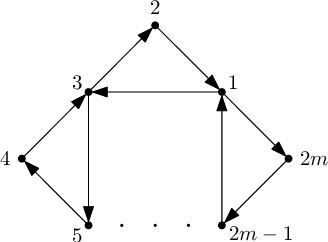}
\end{align*}
We leave it to the reader to verify that the following complex of
projective $\gL$-modules
\[
T = \Bigl( \bigoplus_{i=1}^m \bigl( P_{2i} \xrightarrow{\alpha_{2i-1}}
P_{2i-1} \bigr) \Bigr) \oplus \bigl( \bigoplus_{i=1}^m P_{2i-1} \bigr)
\]
(where the terms $P_{2i-1}$ are always at degree $0$) is a tilting
complex whose endomorphism algebra $\End_{\cD^b(\gL)}(T)$ is isomorphic
to the cluster-tilted algebra whose quiver is given in the right
picture.
\end{proof}

\section{Algorithms and standard forms}
\label{sec:algstdform}

In this section we provide standard forms for derived equivalence
(Theorem~\ref{t:stdder}) as well as ones for good mutation equivalence
(Theorem~\ref{t:stdgood}) of cluster-tilted algebras of type $D$. We
also describe an explicit algorithm which decides on good mutation
equivalence (Theorem~\ref{t:alggood}).

\subsection{Good mutations and good 
double mutations in parametric notation}
We start by describing all the good mutations determined in
Section~\ref{sec:goodmut} using the parametric notation of
Section~\ref{sec:ctaAD} which will be useful in the sequel. Note that
by Proposition~\ref{p:goodmutA} two quivers with the same type and
parameters are indeed equivalent by good mutations so this notation
makes sense.

Each row of Table~\ref{t:mutgood} describes a good mutation between two
quivers of cluster-tilted algebras of type $D$ given in parametric
notation. The numbers $s',s'',s''',t',t'',t'''$ are arbitrary 
non-negative integers
and correspond to the parameters of the rooted quivers $Q'$, $Q''$,
$Q'''$ of type $A$ appearing in the corresponding pictures (referenced
by the column ``Move'').

\begin{table}
\begin{center}
\begin{tabular}{|c|cc|cc|c|}
\hline
Move & Type & Parameters & Type & Parameters & Remarks\\
\hline
I.5a &
I & $(s'+s'',t'+t''+1)$ &
II & $(s'+1,t',s'',t'')$ & \\
I.5b &
I & $(s'+s'',t'+t''+1)$ &
II & $(s',t',s''+1,t'')$ & \\
II.2 &
II & $(s'+1, t', s'', t'')$ &
II & $(s',t',s''+1,t'')$ & \\
II.3 &
II & $(s'+s''', t'+t'''+1, s'', t'')$ &
II & $(s',t',s''+s''', t''+t'''+1)$ & \\
III.1 &
III & $(s'+1, t', s'', t'')$ &
IV & $\bigl((2,s',t'),(1,s'',t'')\bigr)$ & \\
III.2 &
III & $(s'+1, t', s'', t'')$ &
IV & $\bigl((1,s',t'),(2,s'',t'')\bigr)$ & \\
IV.1a &
IV & $\bigl((d_1,s_1,t_1), (d_2,s_2,t_2),\dots \bigr)$ &
IV & $\bigl((1,s_1,t_1),(d_1-2,0,0),(d_2,s_2,t_2),\dots \bigr)$ &
$d_1 \geq 4$ \\
IV.1b &
IV & $\bigl((d_1,s_1,t_1), (d_2,s_2,t_2),\dots \bigr)$ &
IV & $\bigl((d_1-2,s_1,t_1),(1,0,0),(d_2,s_2,t_2),\dots \bigr)$ &
$d_1 \geq 4$ \\
IV.2a &
IV & $\bigl((2,s_1,t_1), (d_2,s_2,t_2), \dots \bigr)$ &
IV & $\bigl((1,s_1,t_1), (d_2,s_2+1,t_2), \dots \bigr)$ & \\
IV.2b &
IV & $\bigl((2,s_1,t_1), (d_2,s_2,t_2), \dots \bigr)$ &
IV & $\bigl((1,s_1+1,t_1), (d_2,s_2,t_2), \dots \bigr)$ & \\
\hline
\end{tabular}
\end{center}

\caption{All good mutations in parametric notation.} \label{t:mutgood}
\end{table}

By looking at the first four rows of the table we immediately draw the
following conclusions.

\begin{lemma} \label{l:IImutgood}
Consider quivers of Type I or II.
\begin{enumerate}
\renewcommand{\theenumi}{\alph{enumi}}
\item
The subset consisting of the quivers of Type I or II is closed under
good mutations.
\item
The subset consisting of all the orientations of a $D_n$ diagram is
closed under good mutations
\item
A quiver in Type I with parameters $(s,t+1)$ for some $s,t \geq 0$ is
equivalent by good mutations to one in Type II with parameters
$(s+1,t,0,0)$.
\item
A quiver in Type II with parameters $(s',t',s'',t'')$ is equivalent by
good mutations to one in Type II with parameters $(s'+s'',t'+t'',0,0)$.
\item
Two quivers in Type II with parameters $(s_1,t_1,0,0)$ and
$(s_2,t_2,0,0)$ are equivalent by good mutations if and only if
$s_1=s_2$ and $t_1=t_2$.
\end{enumerate}
\end{lemma}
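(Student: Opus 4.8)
The plan is to read all five assertions off the first four rows of Table~\ref{t:mutgood}, which list the good mutations I.5a, I.5b, II.2 and II.3 involving types~I and~II, supplementing this with Proposition~\ref{p:goodmutA} for rearrangements internal to the attached rooted quivers of type $A$ and, for the forward direction of~(e), with the derived invariants of Section~\ref{sec:invariants} and Theorem~\ref{thm-det-typeD}. Throughout I use that every row of the table may be read in either direction, the defining condition of a good mutation in Proposition~\ref{p:goodmut} being symmetric in a quiver and its mutation.

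For~(a) I would observe that a good mutation of a type~I or~II quiver is either an internal type~$A$ mutation (at a vertex strictly inside an attached quiver), which is good precisely when it preserves the number of triangles and which leaves the global type and all parameters unchanged, or a mutation at a skeleton vertex, which is one of the moves tabulated in Tables~\ref{t:mutI} and~\ref{t:mutII}. The good skeleton moves are exactly I.5a, I.5b (type~I to type~II) and II.2, II.3 (type~II to type~II), so in every case the result is again of type~I or~II. Part~(b) follows once one identifies the orientations of a $D_n$ diagram with the type~I quivers having $t(Q')=0$: the type~I skeleton is the path $a-c-b$, and gluing a triangle-free rooted quiver of type $A$ at $c$ yields exactly a fork, i.e.\ a $D_n$ diagram, and conversely. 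By~(a) a good mutation keeps us in type~I or~II, and the only good moves leaving type~I, namely I.5a and I.5b, have source parameters $(s'+s'',\,t'+t''+1)$ and hence require a triangle; when $t(Q')=0$ these are unavailable, so every remaining good move preserves type~I together with the value $t(Q')=0$. (This is also immediate from Theorem~\ref{thm-det-typeD}, since these quivers are the only type~I or~II quivers with $\det C_Q=1$, a derived invariant.)

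For~(c), given a type~I quiver with parameters $(s,t+1)$, I would first use Proposition~\ref{p:goodmutA} to rearrange its attached quiver, by good mutations outside the root, into the shape demanded by move~I.5a with a triangle at the root (possible since $t+1\ge 1$), and then apply I.5a with the second glued quiver a single vertex, i.e.\ $s'=s$, $t'=t$, $s''=t''=0$, landing in type~II with parameters $(s+1,t,0,0)$. For~(d) I would drain $Q''$ into $Q'$ by transferring lines with II.2 and triangles with II.3; both moves preserve the totals $s'+s''$ and $t'+t''$ (as is visible from the table, and as they must since $n$ and $\det C_Q=2^{1+t'+t''}$ are derived invariants). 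Reading II.2 so as to empty the lines of $Q''$ reaches parameters $(s'+s'',t',0,t'')$, and then reading II.3 with a one-triangle block moves the remaining $t''$ triangles of $Q''$ across, reaching $(s'+s'',t'+t'',0,0)$.

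Finally~(e): the ``if'' direction is immediate, as two type~II quivers with equal parameters are good-mutation equivalent via Proposition~\ref{p:goodmutA} applied to their attached quivers. For ``only if'', good-mutation equivalence implies derived equivalence, so the two algebras share the number of simples $n$ and the Cartan determinant $\det C$; for a normal form $(s,t,0,0)$ one has $n=4+s+2t$ and, by Theorem~\ref{thm-det-typeD}, $\det C=2^{1+t}$, whence $t=\log_2(\det C)-1$ and $s=n-4-2t$ are forced, giving $s_1=s_2$ and $t_1=t_2$. The only step that leaves the combinatorics of the mutation table is this forward direction of~(e), where the Cartan determinant must be invoked; the most delicate bookkeeping is in~(d), namely verifying that the line- and triangle-transfers of II.2 and II.3 suffice to drain $Q''$ while preserving the totals.
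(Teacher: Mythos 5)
Your proposal is correct and follows essentially the same route as the paper, which establishes this lemma simply by reading off the first four rows of Table~\ref{t:mutgood} (the parametric forms of the good moves I.5a, I.5b, II.2, II.3 from Tables~\ref{t:mutI} and~\ref{t:mutII}), with Proposition~\ref{p:goodmutA} justifying the rearrangements of the attached rooted quivers of type $A$ that make the parametric notation meaningful. Two harmless quibbles: the good skeleton moves are not \emph{exactly} I.5a, I.5b, II.2, II.3 --- rows I.1 and I.2 of Table~\ref{t:mutI} are also good, but they merely reorient a type~I quiver without changing type or parameters, so your conclusions in (a) and (b) stand; and for the forward direction of (e) the paper's table-reading gives the invariance of $(s'+s'',\,t'+t'')$ under all good moves directly, whereas your appeal to the derived invariants $n$ and $\det C_Q = 2^{1+t}$ via Theorem~\ref{thm-det-typeD} is an equally valid (indeed slightly stronger, since it rules out derived equivalence altogether) alternative.
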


\begin{lemma} \label{l:IIImutgood}
Consider quivers of Type III.
\begin{enumerate}
\renewcommand{\theenumi}{\alph{enumi}}
\item
A quiver of Type III with parameters $(s'+1,t',s'',t'')$ is good
mutation equivalent to one of Type III with parameters
$(s',t',s''+1,t'')$.

\item
A quiver of Type III with parameters $(s',t',s'',t'')$ is good mutation
equivalent to one of Type III with parameters $(s'+s'',t',0,t'')$.
\end{enumerate}
\end{lemma}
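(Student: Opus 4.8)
The plan is to derive both parts from the two good mutations labelled III.1 and III.2 in Table~\ref{t:mutgood} (see also Table~\ref{t:mutIII}), combined with two elementary symmetries. The first is the rotational symmetry of type IV quivers: the parameter sequence is only defined up to cyclic rotation, since rotating it realises an automorphism of the central oriented cycle and hence an isomorphism of cluster-tilted algebras. The second is the half-turn symmetry of the type III skeleton. Recall that this skeleton is the oriented $4$-cycle $c' \xrightarrow{\alpha} b \xrightarrow{\beta} c'' \xrightarrow{\gamma} a \xrightarrow{\delta} c'$ with the four zero relations $\alpha\beta\gamma$, $\beta\gamma\delta$, $\gamma\delta\alpha$, $\delta\alpha\beta$. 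The involution $c' \leftrightarrow c''$, $a \leftrightarrow b$ permutes the arrows by $\alpha \leftrightarrow \gamma$ and $\beta \leftrightarrow \delta$, hence permutes the four relations among themselves and preserves all orientations; since it carries the gluing vertex $c'$ to $c''$, it induces an isomorphism of the type III algebra with parameters $(s',t',s'',t'')$ onto the one with parameters $(s'',t'',s',t')$. Throughout I write $\sim$ for good mutation equivalence, which by Proposition~\ref{p:goodmutA} is well defined on the level of parameters.

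For part (a) I would chain these ingredients. Applying III.1 to the type III quiver with parameters $(s'+1,t',s'',t'')$ yields the type IV quiver with parameter sequence $\bigl( (2,s',t'),(1,s'',t'') \bigr)$. Rotating the sequence by one position rewrites the same quiver as $\bigl( (1,s'',t''),(2,s',t') \bigr)$, which is precisely the right-hand side of move III.2 when its rooted quivers $Q'$ and $Q''$ carry parameters $(s'',t'')$ and $(s',t')$ respectively. Reversing III.2 therefore produces the type III quiver with parameters $(s''+1,t'',s',t')$, and the half-turn symmetry identifies this with the one having parameters $(s',t',s''+1,t'')$. Composing these good mutations and isomorphisms gives $(s'+1,t',s'',t'') \sim (s',t',s''+1,t'')$, as claimed.

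For part (b) I would simply iterate (a). Read from right to left, part (a) states that whenever the third parameter is positive one may lower it by one while raising the first by one, i.e.\ $(s',t',s''+1,t'') \sim (s'+1,t',s'',t'')$. A straightforward induction on $s''$ then gives the chain $(s',t',s'',t'') \sim (s'+1,t',s''-1,t'') \sim \dots \sim (s'+s'',t',0,t'')$, with the triangle counts $t'$ and $t''$ unchanged at every step, which is the assertion.

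The argument involves no difficult step: it is essentially a matter of matching the parameter slots of III.1 and III.2 correctly and tracking the two symmetries. The one genuine verification is that the half-turn of the type III skeleton really is an algebra isomorphism, which reduces to the observation above that it permutes the four arrows by $\alpha \leftrightarrow \gamma$, $\beta \leftrightarrow \delta$ and thereby the four defining zero relations among themselves. Everything else is routine bookkeeping.
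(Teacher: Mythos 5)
Your proposal is correct and follows essentially the same route as the paper: the chain III.1, cyclic rotation of the type IV parameter sequence, III.2 in reverse, and the symmetry $(s',t',s'',t'') \simeq (s'',t'',s',t')$ of type III, followed by iteration of (a) to get (b). The only difference is cosmetic: you verify explicitly the half-turn isomorphism of the type III skeleton (checking that $\alpha \leftrightarrow \gamma$, $\beta \leftrightarrow \delta$ permutes the four zero relations), which the paper subsumes under the phrase ``rotational symmetries'' via the view of type III as a degenerate type IV with central $2$-cycle.
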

\begin{proof}
\begin{enumerate}
\renewcommand{\theenumi}{\alph{enumi}}
\item
We have
\begin{align*}
\text{III}(s'+1,t',s'',t'') &\xrightarrow{\text{III.1}} \text{IV}
\bigl( (2,s',t'),(1,s'',t'') \bigr) \simeq \text{IV} \bigl(
(1,s'',t''), (2,s',t') \bigr) \xrightarrow{\text{III.2}}
\text{III}(s''+1,t'',s',t')
\\
&\simeq \text{III}(s',t',s''+1,t'')
\end{align*}
where the isomorphisms follow from rotational symmetries.

\item
Follows from the first part.
\end{enumerate}
\end{proof}

\begin{remark}
The previous lemma shows that in Type III, it is possible to move
linear parts in the rooted quivers of type $A$ from side to side by
using good mutations. It is not possible, however, to move triangles by
good mutations (see III.3 in Table~\ref{t:mutIII} and
Example~\ref{ex:D8}).
\end{remark}

For the next two lemmas we need the following terminology on spikes in
quivers of Type IV. Spikes are \emph{consecutive} if the distance
$d_i$ between them is $1$. A spike is \emph{free} if the attached
rooted quiver of type $A$ consists of just a single vertex.

\begin{lemma} \label{l:IVfreespike}
Consider quivers of Type IV. A free spike at the end of a group of at
least two consecutive spikes can be moved by good mutations to the next
group of consecutive spikes. In other words, the two quivers with
parameters $\bigl((1,s,t), (d,0,0), \dots \bigr)$ and $\bigl((d,s,t),
(1,0,0), \dots \bigr)$ are connected by good mutations.
\end{lemma}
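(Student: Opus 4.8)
The plan is to realize both parameter sequences appearing in the statement as images of one and the same quiver under good mutations already recorded in Table~\ref{t:mutgood}; the conclusion then follows at once because good mutation equivalence is an equivalence relation. The point is that the difference between $\bigl((1,s,t),(d,0,0),\dots\bigr)$ and $\bigl((d,s,t),(1,0,0),\dots\bigr)$ is merely whether the free spike is placed at distance $1$ after the $(s,t)$-spike or at distance $1$ before the next group, and in both cases the free spike can be viewed as having been split off from a single longer spike.

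First I would dispose of the degenerate case $d=1$, where the two parameter sequences literally coincide and there is nothing to prove. For $d \geq 2$ I would introduce the auxiliary type IV quiver with parameter sequence $\bigl((d+2,s,t),\dots\bigr)$, where $\dots$ denotes the remaining triples, left untouched throughout. Since $d \geq 2$ we have $d+2 \geq 4$, so both moves IV.1a and IV.1b of Table~\ref{t:mutgood} apply to its first triple.

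Next I would apply the two moves separately. Move IV.1a sends $(d+2,s,t)$ to $(1,s,t),(d,0,0)$, producing $\bigl((1,s,t),(d,0,0),\dots\bigr)$, whereas move IV.1b sends $(d+2,s,t)$ to $(d,s,t),(1,0,0)$, producing $\bigl((d,s,t),(1,0,0),\dots\bigr)$. These are exactly the two quivers of the statement, with identical tails $\dots$. As a sanity check one can confirm that the portion of the quiver being altered contributes the same number of vertices $d+3+s+2t$ in all three quivers, as it must. Since IV.1a and IV.1b are good mutations, the two target quivers are joined through the common quiver $\bigl((d+2,s,t),\dots\bigr)$ by a sequence of two good mutations, and hence are good mutation equivalent.

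The argument is essentially immediate once the common parent $\bigl((d+2,s,t),\dots\bigr)$ is identified, so the only points demanding care are bookkeeping ones: verifying that the distance $d+2$ meets the hypothesis $d+2 \geq 4$ of IV.1a and IV.1b precisely when $d \geq 2$, and handling the boundary value $d=1$ separately. I would also note that the assumption that the free spike lies at the end of a group of at least two consecutive spikes is exactly what guarantees the $(s,t)$-spike is present immediately before it, so that the auxiliary single spike $(d+2,s,t)$ can be formed; rotational symmetry of the parameter encoding then lets me place this triple in the first position without loss of generality.
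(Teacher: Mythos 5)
Your proposal is correct and coincides with the paper's own proof: both pass through the common intermediate quiver $\bigl((d+2,s,t),\dots\bigr)$, reached from the two given parameter sequences via the good mutations IV.1a and IV.1b of Table~\ref{t:mutgood}, with the hypothesis $d+2\geq 4$ satisfied exactly when $d\geq 2$ and the case $d=1$ being trivial. The only cosmetic difference is that the paper writes the argument as a chain $\bigl((1,s,t),(d,0,0),\dots\bigr) \to \bigl((d+2,s,t),\dots\bigr) \to \bigl((d,s,t),(1,0,0),\dots\bigr)$ rather than as two moves out of a common parent, which is the same reasoning since good mutation equivalence is symmetric.
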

\begin{proof}
We assume $d \geq 2$, otherwise there is nothing to prove. Then
\[
\bigl((1,s,t), (d,0,0), \dots \bigr) \xrightarrow{\text{IV.1a}} \bigl(
(d+2,s,t), \dots \bigr) \xrightarrow{\text{IV.1b}} \bigl((d,s,t),
(1,0,0), \dots \bigr).
\]
\end{proof}

\begin{lemma} \label{l:IVgroupline}
Lines in a rooted quiver of type $A$ attached to a spike in a group of
consecutive spikes in a quiver of Type IV can be moved by good
mutations to a rooted quiver attached to any spike in that group.
\end{lemma}
\begin{proof}
It suffices to show that the two quivers with parameters
$$
\bigl((1,s_1,t_1), (d_2, s_2+1, t_2), \dots \bigr) \mbox{~~~and~~~}
\bigl((1,s_1+1,t_1), (d_2, s_2, t_2), \dots \bigr)
$$
are good mutation equivalent. Indeed,
\[
\bigl((1,s_1,t_1), (d_2, s_2+1, t_2), \dots \bigr)
\xrightarrow{\text{IV.2a}} \bigl( (2, s_1, t_1), (d_2, s_2, t_2), \dots
\bigr) \xrightarrow{\text{IV.2b}} \bigl( (1,s_1+1,t_1), (d_2, s_2,
t_2), \dots \bigr).
\]
In pictures,
\[
\begin{array}{ccccc}
\begin{array}{c}
\xymatrix@=0.3pc{
{Q'} & & {\bullet} \ar[ll] \ar[ddr] \\ \\
& {\cdot} \ar[uur] \ar@/_0.9pc/@{.}[ddrrr] & & {\circ} \ar[ll] \ar[rr]
& & {Q''} \ar[ddl] \\ \\
& & & & {\cdot} \ar[uul]
}
\end{array}
&
\xrightarrow[\mu^+_{\bullet}]{\text{IV.2a}}
&
\begin{array}{c}
\xymatrix@=0.3pc{
{Q'} \ar[drr] \\
& & {\bullet} \ar[dll] \\
{\cdot} \ar[uu] \ar@/_0.9pc/@{.}[ddrrrr]
& & & & {\circ} \ar[ull] \ar[drr] \\
& & & & & & {Q''} \ar[dll] \\
& & & & {\cdot} \ar[uu] \\
&
}
\end{array}
&
\xrightarrow[\mu^+_{\circ}]{\text{IV.2b}}
&
\begin{array}{c}
\xymatrix@=0.3pc{
& & & {\circ} \ar[ddr] & & {Q''} \ar[ll] \\ \\
{Q'} \ar[rr] & & {\bullet} \ar[uur] \ar[ddl] & & {\cdot} \ar[ll] \\ \\
& {\cdot} \ar[uul] \ar@/_0.9pc/@{.}[uurrr]
}
\end{array}
\end{array}
\]
\end{proof}

We now turn to good double mutations as determined in
Section~\ref{sec:further-derived}. They are presented in parametric
notation in Table~\ref{t:mutdblgood}, based on Corollaries~\ref{c:IVdbl}
and~\ref{c:IIIdbl}. Using them, we can achieve the following further
transformations of quivers in Types III and IV described in the next
lemmas.

\begin{table}
\begin{center}
\begin{tabular}{|cc|cc|}
\hline
Type & Parameters & Type & Parameters \\
\hline
III & $(s'+s'', t'+t''+1, s''',t''')$ &
III & $(s', t', s''+s''', t''+t'''+1)$ \\
IV & $\bigl( (1,s'+s'',t'+t''+1), (d_2, s''',t'''), \dots \bigr)$ &
IV & $\bigl( (1,s',t'), (d_2, s''+s''', t''+t'''+1), \dots \bigr)$ \\
\hline
\end{tabular}
\end{center}
\caption{Good double mutations in parametric notation.}
\label{t:mutdblgood}
\end{table}

\begin{lemma} \label{l:IIImutdblgood}
Consider quivers of Type III.
\begin{enumerate}
\renewcommand{\theenumi}{\alph{enumi}}
\item
A quiver of Type III with parameters $(s',t'+1,s'',t'')$ is equivalent
by good double mutations to one of Type III with parameters
$(s',t',s'',t''+1)$.

\item
A quiver of Type III with parameters $(s',t',s'',t'')$ can be
transformed using good mutations and good double mutations to one of
Type III with parameters $(s'+s'',t'+t'',0,0)$.
\end{enumerate}
\end{lemma}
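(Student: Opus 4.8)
The plan is to deduce both parts from results already established, so that no genuinely new computation is required. Part~(a) will be read off as a special case of the good double mutation for type III recorded in the first row of Table~\ref{t:mutdblgood}, while part~(b) will be obtained by first sweeping all \emph{lines} onto one rooted quiver using good mutations and then sweeping all \emph{triangles} onto the same rooted quiver by iterating part~(a).

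For part~(a), recall that Table~\ref{t:mutdblgood} (coming from Corollary~\ref{c:IIIdbl}) asserts that the type III quivers with parameters $(s'+s'', t'+t''+1, s''', t''')$ and $(s', t', s''+s''', t''+t'''+1)$ are related by a good double mutation for all non-negative values of the six parameters. I would specialize the middle rooted quiver of type $A$ to a single vertex, that is, set $s''=t''=0$ there. The two parameter sequences then collapse to $(s', t'+1, s''', t''')$ and $(s', t', s''', t'''+1)$; identifying the remaining parameters $(s''', t''')$ with the $(s'', t'')$ of the lemma gives precisely the equivalence claimed in part~(a). Thus one good double mutation transfers a single triangle from the first rooted quiver of type $A$ to the second, fixing the line parameters and the total number of triangles.

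For part~(b), I would proceed in two stages. First, Lemma~\ref{l:IIImutgood}(b) moves all lines onto the first side by good mutations, transforming $(s', t', s'', t'')$ into $(s'+s'', t', 0, t'')$. The second rooted quiver then still carries $t''$ triangles, which cannot be eliminated by good mutations alone (as noted in the remark following Lemma~\ref{l:IIImutgood}). Here I would invoke part~(a): it preserves the line parameters, so read from right to left with line counts $s'+s''$ and $0$ it transfers one triangle from the second side to the first. Applying this good double mutation $t''$ times carries $(s'+s'', t', 0, t'')$ through $(s'+s'', t'+1, 0, t''-1), \dots$ to $(s'+s'', t'+t'', 0, 0)$, the desired form.

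The conceptual crux, rather than any hard calculation, is the clean separation of the two derived invariants at play: good mutations can redistribute lines freely but are powerless to move triangles across the skeleton, whereas the good double mutation of part~(a) is exactly the operation that moves a triangle, and it does so while leaving the line counts intact. Because of this, the line-consolidation stage and the triangle-consolidation stage of part~(b) do not interfere, and the only thing left to verify is the elementary bookkeeping that the substitution in part~(a) reproduces the table entry and that each step conserves the total triangle number $t'+t''$.
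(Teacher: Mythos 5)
Your proposal is correct and takes essentially the same route as the paper: part~(a) is exactly the specialization $s''=t''=0$ of the first row of Table~\ref{t:mutdblgood}, and part~(b) combines Lemma~\ref{l:IIImutgood} with iterated applications of part~(a), which is precisely the paper's two-line proof. You merely make explicit the bookkeeping (the renaming of parameters and the $t''$-fold triangle transfer) that the paper leaves implicit.
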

\begin{proof}
\begin{enumerate}
\renewcommand{\theenumi}{\alph{enumi}}
\item
Follows from the first row in Table~\ref{t:mutdblgood}.

\item
Follows from the first part together with Lemma~\ref{l:IIImutgood}.
\end{enumerate}
\end{proof}

\begin{lemma} \label{l:IVgrouptriangle}
Triangles in a rooted quiver of type $A$ attached to a spike in a group
of consecutive spikes in a quiver of Type IV can be moved by good
double mutations to a rooted quiver attached to any spike in that
group.
\end{lemma}
\begin{proof}
It suffices to show that the two quivers with parameters
$$
\bigl((1,s_1,t_1), (d_2, s_2, t_2+1), \dots \bigr) \mbox{~~~and~~~}
\bigl((1,s_1,t_1+1), (d_2, s_2, t_2), \dots \bigr)
$$
are equivalent by good
double mutation. Indeed, setting $(s'',t'')=(0,0)$ in the second row of
Table~\ref{t:mutdblgood} shows this equivalence.
\end{proof}

\begin{remark}
A careful look at Tables~\ref{t:mutgood} and~\ref{t:mutdblgood} shows
that one can regard Type III quivers with parameters $(s',t',s'',t'')$
as ``formal'' Type IV quivers with parameters $\bigl( (1,s',t'),
(1,s'',t'') \bigr)$. Indeed, the good mutation moves III.1 and III.2 in
Table~\ref{t:mutgood} then become just specific cases of moves IV.2b
and IV.2a, respectively, and the first row in Table~\ref{t:mutdblgood}
becomes a specific instance of the second.
\end{remark}

\subsection{Proof of Theorem \protect{\ref{t:stdder}}}
\label{sec:stdder}

In this section we prove Theorem~\ref{t:stdder}. Namely, given a quiver
$Q$ of a cluster-tilted algebra of Dynkin type $D$, we show how to find
a quiver in one of the standard forms of Theorem~\ref{t:stdder} whose
cluster-tilted algebra is derived equivalent to that of $Q$.

Indeed, if $Q$ is of Type I or II, then by Lemma~\ref{l:IImutgood} we
can transform it by good mutations to a quiver in the classes (a) or
(b) in Theorem~\ref{t:stdder}, thus proving the derived equivalence for
this case. Similarly, if $Q$ is of Type III, then by
Lemma~\ref{l:IIImutgood} and Lemma~\ref{l:IIImutdblgood} the
corresponding cluster-tilted algebra is derived equivalent to one in
class (c) of the theorem.

Let $Q$ be a quiver of Type IV. If it is a cycle without any spikes, we
distinguish two cases. If the number of vertices is even, then by
Lemma~\ref{l:selfinj} the corresponding cluster-tilted algebra is
derived equivalent to another one in Type IV with spikes. Otherwise, it
gives rise to the standard form $(\mathrm{d}_1)$.

If $Q$ is of Type IV with some spikes, let $\bigl( (d_1, s_1, t_1),
(d_2, s_2, t_2), \dots, (d_r, s_r, t_r) \bigr)$ be its parameters. By
iteratively applying the good mutations IV.1a (or IV.1b) we can
repeatedly shorten by $2$ all the distances $d_i \geq 4$ until they
become $2$ or $3$. By applying the good mutations IV.2a (or IV.2b) we
can shorten further any distance $2$ to a distance of $1$. Thus we get
a parameter sequence where all distances $d_i$ are either $1$ or $3$.

If all the distances are $1$, we are in class $(\mathrm{d}_2)$ when
they sum up to at least $3$ or in class (c) otherwise. The latter case
can be dealt with by Lemma~\ref{l:IIImutgood} and
Lemma~\ref{l:IIImutdblgood} yielding the required standard form,
whereas for the former we note that by Lemma~\ref{l:IVgroupline} and
Lemma~\ref{l:IVgrouptriangle} we can successively move all the lines
and triangles of the attached rooted quivers of type $A$ and
concentrate them on a single spike, yielding the standard form of
$(\mathrm{d}_2)$.

Otherwise, when there is at least one distance of $3$, we observe the
following. Consider a group of consecutive spikes. By
Lemma~\ref{l:IVgroupline} and Lemma~\ref{l:IVgrouptriangle}, inside
such a group one can always concentrate the attached type $A$ quivers
at one of the spikes in the group, thus creating a free spike at the
end of the group. By Lemma~\ref{l:IVfreespike} this free spike can then
be moved to the beginning of the next group. In this way, one can move
all spikes of the group except one to the next group, thus creating a
single spike with some rooted quiver of type $A$ attached.

Continuing in this way, we can eventually merge all groups of at least
two consecutive spikes into one large group, with all the other spikes
being single spikes. In other words, the sequence of distances will
look like $(1,1,\dots,1,3,3,\dots,3)$. At this large group of
consecutive spikes, one can concentrate the rooted quivers of type $A$
at the last spike, yielding exactly the standard form occurring in
$(\mathrm{d}_3)$ of Theorem~\ref{t:stdder}.

\smallskip

To complete the proof of Theorem~\ref{t:stdder}, we show how to
distinguish among standard forms which are not of the class
$(\mathrm{d}_3)$. First, observe that when the number $n$ of vertices
is odd, the form in the class $(\mathrm{d}_1)$ corresponds to the
unique self-injective cluster-tilted algebra with $n$
vertices~\cite{Ringel}, hence it is distinguished by the fact that
being self-injective is invariant under derived
equivalence~\cite{Al-Nofayee}.

The standard forms in all other classes (except $(\mathrm{d}_3)$) can
be distinguished by the determinants of their Cartan matrices. Indeed,
according to Theorem \ref{thm-det-typeD},
these are given in the list below as
\begin{align*}
\begin{array}{c}
1 \\ (a)
\end{array}
& &
\begin{array}{c}
2^{t+1} \\ (b)
\end{array}
& &
\begin{array}{c}
3 \cdot 2^t \\ (c)
\end{array}
& &
\begin{array}{c}
(2b-1) \cdot 2^t \\ (d_2)
\end{array}
\end{align*}
from which it is clear how to distinguish among the standard forms.

\subsection{Algorithm for good mutation equivalence}
\label{sec:alg:goodmut}
In this section we prove Theorem~\ref{t:alggood} and
Theorem~\ref{t:stdgood}. We first observe that by
Lemma~\ref{l:IImutgood} the set of quivers of Types I or II is closed
under good mutations, and moreover that lemma completely characterizes
good mutation equivalence among these quivers in terms of their
parameters, leading to the classes (a) and (b) in
Theorem~\ref{t:stdgood}. We observe also that the cyclic quiver of Type
IV without any spikes does not admit any good mutations, thus it falls
into a separate equivalence class $(\mathrm{d}_1)$ in that theorem.
Therefore we are left to deal only with quivers of Types III and IV
(with spikes). Before describing the algorithm, we introduce a few
notations.

\begin{notat}
Given $r \geq 1$ and a non-empty subset $\phi \neq \cI \subseteq \{1,
2, \dots, r\}$, we define the following two partitions of the set $\{1,
2, \dots, r\}$. Write the elements of $\cI$ in an increasing order $1
\leq i_1 < i_2 < \dots < i_l \leq r$ for $l = |\cI|$, and define the
intervals
\begin{align*}
i_1^+ &= \{i_1, i_1 + 1, \dots, i_2-1\} &
i_1^- &= \{i_l+1, \dots, r, 1, \dots, i_1\} \\
i_2^+ &= \{i_2, i_2 + 1, \dots, i_3-1\} &
i_2^- &= \{i_1+1, \dots, i_2-1, i_2\} \\
& \dots & & \dots \\
i_l^+ &= \{i_l, \dots, r, 1, \dots, i_1-1\} & i_l^- &= \{i_{l-1}+1,
\dots, i_l-1, i_l\}
\end{align*}
We call the partition $i_1^+ \cup i_2^+ \cup \dots \cup i_l^+$ the
\emph{positive} partition defined by $\cI$. Similarly, we call $i_1^-
\cup i_2^- \cup \dots \cup i_l^-$ the \emph{negative} partition defined
by $\cI$.
\end{notat}

\begin{notat}
We partition the set of positive integers as $N_1 \cup N_2 \cup N_3$,
where
\begin{align*}
N_1 = \{1\} &,& N_2 = \{n \geq 2 \,:\, \text{$n$ is even}\} &,& N_3 =
\left\{n \geq 3 \,:\, \text{$n$ is odd}\right\}.
\end{align*}
\end{notat}

\begin{notat}
Given a sequence $\bigl( (d_1, s_1, t_1), (d_2, s_2, t_2), \dots, (d_r,
s_r, t_r) \bigr)$ of triples of non-negative integers and a subset $I$
of $\{1, \dots, r\}$, we define the quantities
\begin{align*}
a(I) &= |\{i \in I \,:\, d_i \in N_3 \}|, \\
b(I) &= |\{i \in I \,:\, d_i \in N_1 \}|
     + \sum_{i \in I \,:\, d_i \in N_2} \frac{d_i}{2}
     + \sum_{i \in I \,:\, d_i \in N_3} \frac{d_i-3}{2}, \\
s(I) &= |\{i \in I \,:\, d_i \in N_2 \}| + \sum_{i \in I} s_i .
\end{align*}
\end{notat}

\begin{alg}[Good mutation class] \label{alg:goodmut}
Given a non-empty sequence of triples of non-negative integers
\[
\bigl( (d_1, s_1, t_1), (d_2, s_2, t_2), \dots, (d_r, s_r, t_r) \bigr)
\]
such that
\begin{itemize}
\item
$d_i \geq 1$ and $s_i, t_i \geq 0$ for all $1 \leq i \leq r$,
\item
$d_1 + d_2 + \dots + d_r \geq 2$ and $(d_1, \dots, d_r) \neq (2)$,
\end{itemize}
parameterizing a quiver of Type III or IV (with spikes), we output its
class (c), $(\mathrm{d}_{2,1})$, $(\mathrm{d}_{2,2})$,
$(\mathrm{d}_{3,1})$ or $(\mathrm{d}_{3,2})$ and the parameters in that
class as specified in Theorem~\ref{t:stdgood} by performing the
following operations.

\begin{enumerate}
\renewcommand{\labelenumi}{\theenumi.}
\item
Compute the subsets
\begin{align*}
\cI_D = \{1 \leq i \leq r \,:\, d_i \in N_3\} &, & \cI_T = \{ 1 \leq i
\leq r \,:\, t_i > 0\} .
\end{align*}

\item
If $\cI_D = \phi$ and $\cI_T = \phi$, set $b, S$ as
\begin{align*}
b = b(\{1, 2, \dots, r\}) &,& S = s(\{1, 2, \dots, r\}) .
\end{align*}
If $b \geq 3$, we are in class $(\mathrm{d}_{2,1})$, otherwise we are
in class (c) with parameters $(S,0,0,0)$.

\item
If $\cI_D = \phi$ and $\cI_T \neq \phi$, enumerate the elements of
$\cI_T$ in increasing order as $\cI_T = \{i_1 < i_2 < \dots < i_l\}$
with $l=|\cI_T|$, and set $b_j, T_j$ for $1 \leq j \leq l$ and $S$ as
\begin{align*}
b_j = b(i_j^+) &,& T_j = t_{i_j} &,& S = s(\{1, 2, \dots, r\}) .
\end{align*}
If $b_1 + \dots + b_l \geq 3$, we are in class $(\mathrm{d}_{2,2})$.
Otherwise, we are in class (c) with parameters $(S+T_1,0,0,0)$ if $l=1$
or $(S+T_1,0,T_2,0)$ if $l=2$.

\item
If $\cI_D \neq \phi$ and $\cI_T = \phi$, enumerate the elements of
$\cI_D$ in increasing order as $\cI_D = \{i_{1,1} < i_{1,2} < \dots <
i_{1,a}\}$ and set $a, b$ and $S_1, \dots, S_a$ as
\begin{align*}
a = a(\{1,2,\dots, r\}) =|\cI_D| &,& b = b(\{1,2,\dots, r\}) &,& S_j =
s(i_{1,j}^-) .
\end{align*}
We are in class $(\mathrm{d}_{3,1})$.

\item
If $\cI_D \neq \phi$ and $\cI_T \neq \phi$, enumerate the elements of
$\cI_T$ in increasing order as $\cI_T = \{i_1 < i_2 < \dots < i_l\}$
with $l=|\cI_T|$. For any $1 \leq j \leq l$,
\begin{itemize}
\item
Enumerate the elements of $i_j^+ \cap \cI_D$ (where the positive
partition is taken with respect to the subset $\cI_T$) in the order
they appear within the interval $i_j^+$ as $i_{j,1} < i_{j,2} < \dots <
i_{j, a(i_j^+)}$.

\item
Set $a_j, b_j, T_j$ and $S_{j,1}, \dots, S_{j,a_j}$ as
\[
a_j = a(i_j^+) ,\quad b_j = b(i_j^+) ,\quad T_j = t_{i_j} ,\quad
(S_{j,1}, \dots, S_{j,a_j}) = \bigl( s(i_{j,1}^-), s(i_{j,2}^-), \dots,
s(i_{j,a(i_j^+)}^-)\bigr)
\]
with the positive partition taken with respect to $\cI_T$ and the
negative one with respect to $\cI_D$.
\end{itemize}
We are in class $(\mathrm{d}_{3,2})$.
\end{enumerate}
\end{alg}

\begin{notat}
We call two sequences $(v_0,v_1,\dots,v_{m-1})$ and
$(w_0,w_1,\dots,w_{m-1})$ \emph{cyclic equivalent} if there is some $0
\leq j \leq m$ such that $w_i = v_{(i+j) \bmod{m}}$ for all $0 \leq i <
m$.
\end{notat}

\begin{dfn}
We define the space $\cS$ of \emph{good mutation parameters} as a
disjoint union of the following five sets. We also define an
equivalence relation $\sim$ on $\cS$ inside each set, and agree that
elements from different sets are inequivalent.
\begin{enumerate}
\item [(c)]
Triples $(T_1, T_2, S)$ of non-negative integers. $(T_1,T_2,S) \sim
(T'_1, T'_2, S')$ if and only if $S=S'$ and $(T_1,T_2)$, $(T'_1,T'_2)$
are cyclic equivalent.

\item [$(\mathrm{d}_{2,1})$]
Pairs $(b,S)$ with $b \geq 3$ and $S \geq 0$. $(b,S) \sim (b',S')$ if
and only if $b=b'$ and $S=S'$.

\item [$(\mathrm{d}_{2,2})$]
Pairs
\[
\Bigl( \bigl( (b_1, T_1), (b_2, T_2), \dots, (b_l, T_l) \bigr), S
\Bigr)
\]
for some $l \geq 1$, where the numbers $b_j, T_j$ are positive, $b_1 +
\dots + b_l \geq 3$ and $S \geq 0$. Two such pairs are equivalent if
and only if $S=S'$ and $\bigl( (b_1, T_1), (b_2, T_2), \dots, (b_l,
T_l) \bigr)$, $\bigl( (b'_1, T'_1), (b'_2, T'_2), \dots, (b'_l, T'_l)
\bigr)$ are cyclic equivalent.

\item [$(\mathrm{d}_{3,1})$]
Pairs $\bigl( b, (S_1, \dots, S_a) \bigr)$ where $b \geq 0$ and $(S_1,
\dots, S_a)$ is a sequence of $a > 0$ non-negative integers. Two such
pairs are equivalent if and only if $b=b'$ and $(S_1, \dots, S_a)$,
$(S'_1, \dots, S'_a)$ are cyclic equivalent.

\item [$(\mathrm{d}_{3,2})$]
Sequences
\[
\Bigl( \bigl(b_1, (S_{1,1},\dots,S_{1,a_1}), T_1 \bigr), \bigl(b_2,
(S_{2,1},\dots,S_{2,a_2}), T_2 \bigr), \dots, \bigl(b_l,
(S_{l,1},\dots,S_{l,a_l}), T_l \bigr) \Bigr),
\]
of any length $l \geq 1$, where for any $1 \leq j \leq l$ the numbers
$a_j$, $b_j$ are non-negative integers not both zero,
$(S_{j,1},\dots,S_{j,a_j})$ is a (possibly empty) sequence of $a_j$
non-negative integers and $T_j$ is a positive integer. The relation
$\sim$ is just cyclic equivalence.
\end{enumerate}
\end{dfn}

\begin{remark}
It is easy to decide whether two good mutation parameters are
equivalent or not, because this involves only checking for cyclic
equivalence.
\end{remark}

Algorithm~\ref{alg:goodmut} in fact computes a map $\Sigma : \cQ \to
\cS$ from the set $\cQ$ of all quivers of Types III or IV (with spikes)
to the set $\cS$ of good mutation parameters. On the other hand, the
standard forms stated in Theorem~\ref{t:stdgood} can be seen as a map
$Q : \cS \to \cQ$. We also have two natural equivalence relations on
these sets: the equivalence relation $\sim$ defined on $\cS$ via cyclic
equivalence, and the good mutation equivalence on $\cQ$, which we also
denote by $\sim$. The correctness of the algorithm is guaranteed by the
following proposition whose proof is long and tedious, building
on arguments similar to those presented in Section~\ref{sec:stdder};
the complete proof can be found in the first author's thesis
\cite[Prop. 4.2.47]{Bastian-thesis}.

\begin{prop}
Let $q, q' \in \cQ$ and $\sigma, \sigma' \in \cS$.
\begin{enumerate}
\renewcommand{\theenumi}{\alph{enumi}}
\item
If $q \sim q'$, then $\Sigma(q) \sim \Sigma(q')$.

\item
If $\sigma \sim \sigma'$ then $Q(\sigma) \sim Q(\sigma')$.

\item
If $\sigma \in \cS$ then $\Sigma(Q(\sigma)) = \sigma$. In other words,
applying Algorithm~\ref{alg:goodmut} to a standard form as in
Theorem~\ref{t:stdgood} recovers the parameters of that form.

\item
If $q \in \cQ$ then there exists $\sigma \in \cS$ such that $q \sim
Q(\sigma)$. In other words, a quiver can be transformed by good
mutations to a quiver in standard form as in Theorem~\ref{t:stdgood}.
\end{enumerate}
\end{prop}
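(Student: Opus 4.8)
The plan is to read the four assertions together as the statement that the induced maps $\bar\Sigma$ and $\bar Q$ are mutually inverse bijections between the good mutation classes $\cQ/{\sim}$ and the parameter classes $\cS/{\sim}$, and to prove them in the order (c), (b), (d), (a). The basic reduction, used throughout, is to cut down the generating good mutations. By Proposition~\ref{p:goodmutA}, any good mutation performed inside a rooted quiver of type $A$ away from its root preserves the parameters $(s_i,t_i)$ of every attached quiver, hence does not change the parametric form at all and leaves $\Sigma$ untouched; every good mutation affecting the central cycle or a spike appears, in parametric notation, in Table~\ref{t:mutgood}. Using the remark identifying a type~III quiver of parameters $(s',t',s'',t'')$ with the formal type~IV quiver $\bigl((1,s',t'),(1,s'',t'')\bigr)$, the moves III.1 and III.2 become instances of IV.2b and IV.2a, so the whole of good mutation equivalence on $\cQ$ is generated by the four parametric moves IV.1a, IV.1b, IV.2a and IV.2b.

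For (c) I would simply run Algorithm~\ref{alg:goodmut} on each of the five standard forms of Theorem~\ref{t:stdgood}. In every case all distances lie in $\{1,3\}$ together with the ``$(1,0,0)$'' padding, so $\cI_D$ and $\cI_T$ are read off directly from the positions of the distance-$3$ spikes and the triangle-bearing spikes; a bookkeeping computation then shows that $a(\cdot)$, $b(\cdot)$, $s(\cdot)$ and the nested sequences $(b_j,T_j)$, $(S_{j,k})$ evaluate back to the input, giving $\Sigma(Q(\sigma))=\sigma$. For (b) I would use that the standard forms of classes $(\mathrm{d_{2,1}})$--$(\mathrm{d_{3,2}})$ sit on a central cycle whose rotational relabelling is an isomorphism of quivers, so a cyclic shift of the parameter sequence yields an isomorphic, a fortiori good-mutation-equivalent, quiver; for class (c) the relevant symmetry is the rotation of the two formal spikes, which is exactly the $(T_1,T_2)\sim(T_2,T_1)$ identification with $S$ fixed.

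Part (d) is the constructive heart and runs parallel to Section~\ref{sec:stdder}, but now using only good mutations. Starting from arbitrary type~III/IV parameters I would first apply IV.1a/IV.1b repeatedly: each application splits off a distance-$1$ spike and shortens a remaining distance by $2$, so iterating brings every distance down to $2$ or $3$; then IV.2a/IV.2b turn every remaining distance $2$ into a distance $1$, absorbing the freed line into a neighbour. This leaves all distances in $\{1,3\}$. Next, using Lemma~\ref{l:IVgroupline} to slide lines within a block of consecutive spikes and Lemma~\ref{l:IVfreespike} to pass free spikes between blocks, I would consolidate the configuration; since good mutations never move a triangle, the cyclic pattern of triangle-bearing spikes organises the quiver into the canonical block shape of $(\mathrm{d_{2,2}})$ or $(\mathrm{d_{3,2}})$, with the degenerate possibilities (no distance-$3$ spike and/or no triangle) falling into (c), $(\mathrm{d_{2,1}})$ or $(\mathrm{d_{3,1}})$. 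Organising this case split by whether $\cI_D$ and $\cI_T$ are empty matches the five branches of Algorithm~\ref{alg:goodmut} one to one.

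Finally, for (a) it suffices, by the first paragraph, to check that each of IV.1a, IV.1b, IV.2a and IV.2b leaves $\Sigma$ unchanged up to cyclic equivalence. The easy part is that the global quantities $a(\{1,\dots,r\})$, $b(\{1,\dots,r\})$, $s(\{1,\dots,r\})$ and the multiset of nonzero triangle counts are invariant under all four moves; this is a one-line computation with the partition $N_1\cup N_2\cup N_3$ (for instance IV.1a sends an even $d_1$ of $b$-weight $d_1/2$ to $1$ plus $(d_1-2)/2$, and an odd $d_1\ge 5$ of $b$-weight $(d_1-3)/2$ to $1$ plus $(d_1-5)/2$, both unchanged). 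The main obstacle is the refined cyclic data: one must track how $\cI_D$, $\cI_T$ and their induced positive and negative partitions transform when a move reindexes the spikes and possibly inserts a free spike inside a block. The two structural facts that make this go through are that the free spike produced by IV.1a/IV.1b carries $t=0$, so it never enters $\cI_T$ and cannot disturb the block decomposition coming from the triangle positions, and that the splitting $d_1\mapsto(1,d_1-2)$ preserves membership in $N_3$ while redistributing line content in a way that leaves each $b(i_j^+)$ and each $S_{j,k}=s(i_{j,k}^-)$ intact. Carrying out this bookkeeping across the four moves and the five output branches is the long but routine verification in which essentially all the work of the proposition resides.
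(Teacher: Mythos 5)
Your proposal is correct and follows exactly the route the paper intends: the paper in fact prints no proof of this proposition, stating only that the argument is ``long but straightforward building on arguments similar to those presented in Section~\ref{sec:stdder}'' and leaving it to the reader, and your outline---reducing good mutation equivalence on types III/IV to the parametric moves IV.1a/b and IV.2a/b (with type III read as formal type IV), the invariance bookkeeping for (a) including the two structural facts about the inserted $t=0$ free spike and the $N_3$-preserving splitting $d_1 \mapsto (1,d_1-2)$, rotation for (b), direct evaluation of Algorithm~\ref{alg:goodmut} on the standard forms for (c), and the good-mutation-only normal-form reduction for (d) exploiting that triangles are immobile under good mutations---is precisely such an elaboration. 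The only spot to phrase more carefully is class (c) in part (b): rotating the two formal spikes of the type III standard form yields only $\bigl((1,0,T_2),(1,S,T_1)\bigr)$ up to quiver isomorphism, and one still needs Lemma~\ref{l:IVgroupline} (equivalently Lemma~\ref{l:IIImutgood}) to slide the $S$ lines back onto the triangle-free side to reach the standard form of $(T_2,T_1,S)$, a tool you already invoke in part (d).
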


This completes the proof of Theorem~\ref{t:alggood} and
Theorem~\ref{t:stdgood}.

\subsection{Algorithm for computing standard forms}
\label{sec:algder}

We now present an algorithm to compute the standard
form for a cluster-tilted algebra of type $D$ given
in parametric notation.
We keep the notations of the previous section.

\begin{notat}
Given a sequence $\bigl( (d_1, s_1, t_1), (d_2, s_2, t_2), \dots, (d_r,
s_r, t_r) \bigr)$ of triples of non-negative integers and a subset $I$
of $\{1, \dots, r\}$, we define
$
t(I) = \sum_{i \in I} t_i .
$
\end{notat}

\begin{alg}[Standard form]
Given a non-empty sequence of triples of non-negative integers
\[
\bigl( (d_1, s_1, t_1), (d_2, s_2, t_2), \dots, (d_r, s_r, t_r) \bigr)
\]
such that
\begin{itemize}
\item
$d_i \geq 1$ and $s_i, t_i \geq 0$ for all $1 \leq i \leq r$,
\item
$d_1 + d_2 + \dots + d_r \geq 2$ and $(d_1, \dots, d_r) \neq (2)$,
\end{itemize}
parameterizing a quiver of Type III or IV (with spikes), we output its
class (c), $(\mathrm{d}_2)$ or $(\mathrm{d}_3)$ and the parameters in
that class as specified in Theorem~\ref{t:stdder} by performing the
following operations.

\begin{enumerate}
\renewcommand{\labelenumi}{\theenumi.}
\item
Compute the subset $\cI_D = \{1 \leq i \leq r \,:\, d_i \in N_3\}$.

\item
If $\cI_D = \phi$, set $b$, $s$ and $t$ as
\begin{align*}
b = b(\{1, 2, \dots, r\}) &,& s = s(\{1, 2, \dots, r\}) &,& t = t(\{1,
2, \dots, r\}) .
\end{align*}
If $b \geq 3$, we are in class $(\mathrm{d}_2)$, otherwise we are in
class (c) with parameters $(s,t)$.

\item
If $\cI_D \neq \phi$, enumerate the elements of $\cI_D$ in increasing
order as $\cI_D = \{i_1 < i_2 < \dots < i_k\}$ and set $k, b$, $s_1,
\dots, s_k$ and $t_1, \dots, t_k$ as
\begin{align*}
k = |\cI_D| &,& b = b(\{1,2,\dots, r\}) &,& s_j = s(i_j^-) &,& t_j =
t(i_j^-).
\end{align*}
We are in class $(\mathrm{d}_3)$.
\end{enumerate}
\end{alg}

The proof of correctness of this algorithm is essentially
contained in Section \ref{sec:stdder}.

\appendix

\section{Proofs of Cartan determinants} \label{appendix:det}

As a consequence of Proposition \ref{prop-asymmetry} the determinant of
the Cartan matrix is invariant under derived equivalences. The aim of
this section is to provide a proof of the formulae for the determinants of the Cartan
matrices of all cluster-tilted algebras of Dynkin type $D$ as given
in Theorem \ref{thm-det-typeD}. Recall that
the quivers of the cluster-tilted algebras of type $D$ are given by the
quivers of Types I, II, III and IV described in
Section~\ref{sec:ctaAD}.

As these quivers are defined by gluing of rooted quivers of type $A$,
it is useful to also have formulae for cluster-tilted algebras of
Dynkin type $A$. Since cluster-tilted algebras of type $A$ are gentle,
the Cartan determinants can be obtained as a special case of
\cite{Holm} where formulae for the Cartan determinants of arbitrary
gentle algebras are given; for a simplified proof for the special case
of cluster-tilted algebras of type $A$ see also \cite{Buan-Vatne}.

\begin{prop} \label{prop-det-typeA}
Let $Q$ be a quiver mutation equivalent to a Dynkin quiver of type $A$.
Then the Cartan matrix of the cluster-tilted algebra corresponding to $Q$ has
determinant $\det C_Q = 2^{t(Q)}.$
\end{prop}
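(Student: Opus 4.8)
The plan is to prove the formula by induction on the number $n$ of vertices of $Q$, using the fact — guaranteed by the description of type $A$ mutation quivers recalled in Section~\ref{sec:ctaAD} — that $Q$ is glued from lines and triangles along single vertices with no further cycles in its underlying graph. Thus the triangles and lines are exactly the blocks of the underlying graph, and they form a tree (the block--cut tree). Throughout I use that $\gL_Q$ is schurian, so $(C_Q)_{ij}$ equals the number of nonzero paths from $i$ to $j$, which is $0$ or $1$; paths are written as vertex sequences as in Section~\ref{sec:ctaAD}. The base case $n=1$ has no triangle and $\det C_Q=1=2^0$. For $n\geq 2$ the block--cut tree is nonempty, so I would pick a \emph{leaf block} and reduce, treating the two possible shapes of that block separately.

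If the leaf block is a line, i.e. a pendant arrow between a degree-one vertex $v$ and the rest of $Q$, then $v$ is a source or a sink. No nonzero path enters a source and none leaves a sink, so the column of $C_Q$ indexed by $v$ (in the source case) or its row (in the sink case) is the standard basis vector $e_v$. Expanding the determinant along that row or column gives $\det C_Q=\det C_{Q\setminus v}$, and since deleting $v$ removes no triangle we have $t(Q\setminus v)=t(Q)$; the inductive hypothesis then yields the claim.

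If the leaf block is a triangle $T$, it is attached to the rest of $Q$ at a single cut vertex $c$, so the other two vertices $a,b$ are \emph{pure} (they lie in no other block). Orient $T$ as $c\to a\to b\to c$; by Remark~\ref{rem:relctaA} the zero relations are the three paths $c,a,b$ and $a,b,c$ and $b,c,a$. Writing $R=V(Q)\setminus\{a,b\}$ and ordering the vertices $(a,b,R)$, I would read the entries of $C_Q$ off the relations: the only arrow out of $a$ is $a\to b$ and $a,b,c=0$, so row $a$ equals $e_a+e_b$; every path into $b$ factors as $\cdots\to c\to a\to b$ and $c,a,b=0$, so the $b$-column over $R$ vanishes; and since $b\to c$ and $c\to a$ each belong only to $T$, prepending $b\to c$ to (resp. appending $c\to a$ after) any path in $R$ preserves non-vanishing, whence the $b$-row over $R$ equals the $c$-row of $C_R$ and the $a$-column over $R$ equals the $c$-column of $C_R$, where $C_R=C_{Q'}$ is the Cartan matrix of the quiver $Q'$ obtained by erasing $T$. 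This puts $C_Q$ in the block form
\[
C_Q=\begin{pmatrix} 1 & 1 & \mathbf{0} \\ 0 & 1 & \mathbf{r} \\ \mathbf{c} & \mathbf{0} & C_R \end{pmatrix},
\]
with $\mathbf{r}$ the $c$-th row and $\mathbf{c}$ the $c$-th column of $C_R$. Expanding along the first row, the first minor is block triangular with value $\det C_R$; in the second minor (delete row $a$, column $b$) the row operation subtracting the $b$-row from the $c$-row produces a row $(1,\mathbf{0})$, collapsing that minor to $-\det C_R$. Hence $\det C_Q=\det C_R-(-\det C_R)=2\,\det C_R=2\cdot 2^{\,t(Q)-1}=2^{\,t(Q)}$, as $t(Q')=t(Q)-1$.

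The main obstacle is precisely the bookkeeping in the triangle case: one must be sure that near $T$ the \emph{only} vanishing compositions are the three triangle relations, so that (i) no nonzero path between two $R$-vertices secretly passes through $a$ or $b$ — this is what forces $(C_Q)|_{R\times R}=C_{Q'}$ — and (ii) the two ``duplicated row/column'' identities hold verbatim. Both rest on $a,b$ being pure, which is exactly what a leaf triangle supplies. Once these vanishing statements are checked, the determinant evaluation is the short cofactor computation above, which I have already confirmed on the single triangle, where it returns the matrix $\left(\begin{smallmatrix}1&1&0\\0&1&1\\1&0&1\end{smallmatrix}\right)$ of determinant $2$. (Alternatively, the formula follows from the general Cartan-determinant formula for gentle algebras in~\cite{Holm}, but the inductive argument above is self-contained.)
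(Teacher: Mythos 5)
Your proof is correct, and it takes a genuinely different route from the paper: for this proposition the paper offers no internal argument at all, instead observing that cluster-tilted algebras of type $A$ are gentle and quoting the general Cartan-determinant formula for gentle algebras from~\cite{Holm}, with a simplified proof referenced to~\cite{Buan-Vatne}. Your self-contained induction on leaf blocks of the block--cut tree is sound: the line case is the standard sink/source expansion, and in the triangle case your block form and the two minors are right (I checked that the second minor equals $-\det C_R$ independently of where $c$ sits in the ordering of $R$, since the cofactor sign $(-1)^k$ cancels against the $(-1)^{k-1}$ from moving the duplicated row to the top), giving $\det C_Q = 2\det C_{Q'}$; and you correctly isolate and justify the two vanishing statements that make the bookkeeping work, both resting on the purity of $a,b$ and on the fact that the relations of Remark~\ref{rem:relctaA} are quadratic monomial and local to triangles, so a path vanishes iff it contains a triangle relation as a length-two subpath. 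It is worth noting that your triangle step is essentially the technique the paper \emph{does} use, but only in the type-$D$ appendix: Lemma~\ref{lemma-det-shrinking}(i) is your pendant-vertex reduction, and Lemma~\ref{lemma-det-shrinking}(ii) proves $\det C_Q = 2\det C_{Q\setminus\{a,b\}}$ for a triangle with two pure vertices via the single row operation $r_a \mapsto r_a - r_b + r_c$, which produces the row $(2,0,\dots,0)$ in one stroke and is marginally slicker than your cofactor-plus-row-operation computation, though the two are equivalent. So in effect you have transported the paper's type-$D$ shrinking lemma back to type $A$: what your route buys is a self-contained proof avoiding the external gentle-algebra machinery; what the paper's route buys is brevity and a formula valid for all gentle algebras, of which this is a special case.
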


\smallskip

For proving Theorem \ref{thm-det-typeD} we shall first show a useful
reduction lemma.
We need the following notation: if $Q$ is a quiver and $V$ a set
of vertices in $Q$, then $Q\setminus V$ is the quiver obtained from $Q$
by removing all vertices in $V$ from $Q$ and all arrows attached to them.

\begin{lemma} \label{lemma-det-shrinking}
Let $Q$ be a quiver in the mutation class of a quiver of Dynkin type
$D$, i.e. $Q$ is of one of the Types I,II,III,IV given in Section
\ref{sec:ctaAD}.
\begin{enumerate}
\item[{(i)}] Suppose $Q$ contains a vertex $a$ of valency 1. Then
$\det C_Q = \det C_{Q\setminus\{a\}}$.
\item[{(ii)}] Suppose that $Q$ contains an oriented triangle with vertices
$a,b,c$ (in this order, i.e. there is an arrow from $b$ to $c$ etc)
where $a$ and $b$ have valency 2 in $Q$ and where
the quiver $Q'=Q\setminus \{a,b\}$ is mutation equivalent to a quiver
of Dynkin type $A$ or $D$.
Then $\det C_Q = 2\cdot \det C_{Q\setminus\{a,b\}}= 2\cdot \det C_{Q'}$.
\end{enumerate}
\end{lemma}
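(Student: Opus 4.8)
The plan is to prove both parts by cofactor or Laplace expansion of $C_Q$, after writing out explicitly the rows and columns indexed by the vertices to be removed. Throughout I would use that these algebras are schurian, so that $(C_Q)_{ij}$ equals $1$ if there is a nonzero path $i \rightsquigarrow j$ and $0$ otherwise, together with the explicit relations recalled in Section~\ref{sec:ctaAD}. The guiding principle is that removing $a$ (resp.\ $a,b$) changes $C_Q$ in a controlled way precisely because these vertices are attached to the rest of $Q$ only through a single arrow (resp.\ through the single vertex $c$).

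For part (i) I would first note that the unique arrow at a valency-$1$ vertex $a$ lies on no oriented cycle, hence occurs in no relation, so that no nonzero path passes through $a$. If that arrow is $a \to b$, then no nonzero path ends at $a$, so the column of $C_Q$ indexed by $a$ is the standard basis vector $e_a$. Expanding the determinant along this column yields $\det C_Q = \det M$, where $M$ is the minor deleting row and column $a$; since no path uses $a$, this minor is exactly $C_{Q \setminus \{a\}}$, giving the claim. The case of an arrow $b \to a$ is symmetric, expanding instead along the row indexed by $a$.

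For part (ii) write the triangle as $a \to b \to c \to a$. Because $a$ and $b$ have valency $2$, this triangle is pendant off $c$; one checks (using that the spike triangles of type IV have two vertices of valency at least $3$, and that the type II and III skeleta contain no triangle with two valency-$2$ vertices) that it lies inside an attached rooted quiver of type $A$, so that by Remark~\ref{rem:relctaA} the three length-two compositions $a\to b\to c$, $b\to c\to a$ and $c\to a\to b$ all vanish. Reading off the entries I would obtain that from $a$ one reaches only $a$ and $b$, that the only nonzero path into $b$ is $a\to b$, and --- the step I expect to be the crux --- that for every $v$ in $R := Q' \setminus \{c\}$ one has $(C_Q)_{bv} = (C_Q)_{cv}$ and $(C_Q)_{va} = (C_Q)_{vc}$. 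Ordering the vertices as $a, b, c, R$, this produces the block form
\begin{equation*}
C_Q = \begin{pmatrix} 1 & 1 & 0 & 0 \\ 0 & 1 & 1 & \rho \\ 1 & 0 & 1 & \rho \\ \sigma & 0 & \sigma & C'_{RR} \end{pmatrix},
\end{equation*}
where $\rho$, $\sigma$ and $C'_{RR}$ are the blocks of $C_{Q'}$. Subtracting row $c$ from row $b$ makes the first two rows supported on columns $a,b$ with $2\times 2$ block $\left(\begin{smallmatrix} 1 & 1 \\ -1 & 1 \end{smallmatrix}\right)$ of determinant $2$, and Laplace expansion along these two rows gives $\det C_Q = 2 \cdot \det C_{Q'}$.

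The hard part will be establishing the two identities $(C_Q)_{bv} = (C_Q)_{cv}$ and $(C_Q)_{va} = (C_Q)_{vc}$, i.e.\ that prepending $b \to c$ to, or appending $c \to a$ to, a nonzero path meeting $R$ never creates a new relation. Here I would use the valency-$2$ hypothesis, which forces $b$ (resp.\ $a$) to be reachable only from $a$ (resp.\ $c$) and forces every path from $c$ into $R$, and every path from $R$ into $c$, to avoid $a$ and $b$ entirely; the only composition that could vanish at the relevant step is the triangle's own relation $b \to c \to a$, which is irrelevant since the paths in question meet $R$ rather than $a$. Crucially, since the algebra is schurian each $\Hom$-space is at most one-dimensional, so I only need to track the existence of a nonzero path and not its multiplicity, which makes these identities insensitive to the commutativity and long zero relations that may occur elsewhere in $Q'$. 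Finally, the hypothesis that $Q'$ is mutation equivalent to type $A$ or $D$ guarantees that $C_{Q'}$ is the Cartan matrix of an honest cluster-tilted algebra and coincides with the displayed submatrix of $C_Q$.
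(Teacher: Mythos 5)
Your proof is correct and takes essentially the same route as the paper's: in (i) the column (or row) of $a$ is a unit vector and one Laplace-expands, and in (ii) both proofs rest on the identical structural facts --- the triangle relations vanish, rows $b$ and $c$ of $C_Q$ agree outside the triangle, and column $b$ is supported on $\{a,b\}$ --- followed by an elementary row operation producing the factor $2$ and expansion down to $C_{Q'}$ (your $r_b \mapsto r_b - r_c$ is trivially equivalent to the paper's $r_a \mapsto r_a - r_b + r_c$). Your explicit check that a triangle with two valency-$2$ vertices must lie inside an attached rooted quiver of type $A$ (so that Remark~\ref{rem:relctaA} applies) is a detail the paper asserts without proof, and your extra column identity $(C_Q)_{va} = (C_Q)_{vc}$ is true but not needed for the expansion.
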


\begin{proof}
\begin{enumerate}
\item[{(i)}] Since taking transposes does not change the determinant
we can assume that $a$ is a sink. Then the Cartan matrix of the
cluster-tilted algebra corresponding to $Q$ has the form
$$C_Q = \left( \begin{array}{c|ccc}
1 & 0 & \ldots & 0 \\
\hline
\ast & & & \\
\vdots & & C_{Q\setminus\{a\}} & \\
\ast & & & \\
\end{array} \right)
$$
from which the desired formula directly follows by
Laplace expansion.
\item[{(ii)}] In the cluster-tilted algebra corresponding to $Q$, every
product of two consecutive arrows in the triangle $a,b,c$ is zero.
Moreover, in the quiver $Q\setminus\{a\}$
there is a one-one correspondence between non-zero paths
starting in $c$ and non-zero paths starting in $b$ by extending
any of the former paths by the arrow from $b$ to $c$ (in fact, by the
unique relations in the cluster-tilted algebra all these extensions
remain non-zero). Therefore, the Cartan matrix of the cluster-tilted
algebra corresponding to $Q$ has the form
$$C_Q = \left(
\begin{array}{ccccccc}
1 & 1 & 0 & 0 & \ldots & \ldots & 0 \\
0 & 1 & 1 & \ast_1 & \ldots & \ldots & \ast_n \\
1 & 0 & 1 & \ast_1 & \ldots & \ldots & \ast_n \\
\ast & 0 & \ast & & & & \\
\vdots & \vdots & \vdots & & C_{Q\setminus\{a,b,c\}}& & \\
\vdots & \vdots & \vdots & &  & & \\
\ast & 0 & \ast & & & & \\
\end{array}
\right)
$$
where the first three rows are labelled by $a$, $b$ and $c$, respectively.
The entries marked by $\ast_i$ are really the same in the rows for $b$ and
$c$ because of the one-one correspondence just mentioned.
Moreover, note that in the (first) row for $a$ and in the (second) column for $b$
we have 0's except the two 1's indicated
because of the zero-relations in the triangle with vertices $a,b,c$.

Denote by $r_v$ the row in the above matrix corresponding to the vertex $v$.
We now perform an elementary row operation, namely replace the first row $r_a$
by $r_a - r_b + r_c$. Then we get
$$\det C_Q =
\det \left(
\begin{array}{ccccccc}
2 & 0 & 0 & 0 & \ldots & \ldots & 0 \\
0 & 1 & 1 & \ast_1 & \ldots & \ldots & \ast_n \\
1 & 0 & 1 & \ast_1 & \ldots & \ldots & \ast_n \\
\ast & 0 & \ast & & & & \\
\vdots & \vdots & \vdots & & C_{Q\setminus\{a,b,c\}}& & \\
\vdots & \vdots & \vdots & &  & & \\
\ast & 0 & \ast & & & & \\
\end{array}
\right)
= 2\cdot \det C_{Q\setminus\{a,b\}}
$$
where the last equality follows directly by Laplace expansion
(with respect to the row of $a$ and then the column of $b$).
\end{enumerate}
\end{proof}

We will also need the following three lemmas dealing with skeleta of
Type IV, i.e.\ the rooted quivers of type $A$ consist of just one
vertex.

\begin{lemma} \label{lemma-det-no_spikes}
Let $Q$ be a quiver of Type IV which contains no spikes at all, i.e.\
it is just an oriented cycle of length $k \geq 3$. Then $\det C_Q =
k-1$.
\end{lemma}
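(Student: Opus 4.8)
The plan is to write down the Cartan matrix $C_Q$ explicitly, recognize it as a circulant matrix, and evaluate its determinant.

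First I would identify the cluster-tilted algebra. By the definition of type IV with $r=0$ spikes, its quiver is the oriented cycle $Q$ on vertices $1, \dots, k$ with arrows $i \to i+1$ (indices modulo $k$), and the only relations say that every path of length $k-1$ around the cycle vanishes. Hence a path along the cycle is nonzero precisely when its length is at most $k-2$. Since from each vertex there is exactly one outgoing arrow, there is, for each length, at most one path between two given vertices; a path from $i$ to $j$ has length $\ell$ iff $\ell \equiv j-i \pmod{k}$, so the only length that can be $\leq k-2$ is $(j-i) \bmod k$ itself. Writing $(C_Q)_{ij}$ for the number of nonzero paths between $i$ and $j$ (in the relevant direction), I therefore get $(C_Q)_{ij}=1$ whenever $(j-i)\bmod k \in \{0,1,\dots,k-2\}$ and $(C_Q)_{ij}=0$ when $(j-i)\equiv -1 \pmod k$. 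Thus $C_Q$ is the $k\times k$ matrix all of whose entries are $1$ except for a single $0$ in each row and column, placed cyclically; equivalently $C_Q = J - P$, where $J$ is the all-ones matrix and $P$ is the permutation matrix of a $k$-cycle. (The transposed convention only replaces $C_Q$ by $C_Q^T$ and does not affect the determinant.)

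Next I would compute the determinant by exploiting the fact that $C_Q$ is a circulant matrix with first row $(1,1,\dots,1,0)$. Its eigenvalues are $\lambda_m = \sum_{j=0}^{k-2}\omega^{jm}$ for $m=0,1,\dots,k-1$, where $\omega = e^{2\pi i/k}$. For $m=0$ this gives $\lambda_0 = k-1$, while for $m\neq 0$ the full geometric sum over all $k$-th roots of unity vanishes, leaving $\lambda_m = -\omega^{-m}$. Multiplying the eigenvalues yields $\det C_Q = (k-1)\prod_{m=1}^{k-1}(-\omega^{-m})$, and here a short root-of-unity computation shows that the remaining factor is $1$, giving $\det C_Q = k-1$.

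The computation is essentially routine; the only points requiring care are correctly reading off the relations so that the single vanishing path corresponds to exactly one off-diagonal, and checking the sign in $\prod_{m=1}^{k-1}(-\omega^{-m}) = (-1)^{k-1}\omega^{-k(k-1)/2} = (-1)^{k-1}(-1)^{k-1} = 1$. I would sanity-check against $k=3$, where $C_Q = \left(\begin{smallmatrix} 1 & 1 & 0 \\ 0 & 1 & 1 \\ 1 & 0 & 1 \end{smallmatrix}\right)$ has determinant $2 = k-1$. If one prefers to avoid roots of unity, the same value follows by elementary row operations: subtracting each row from the next turns all but one row into differences $e_i - e_{i-1}$ of unit vectors, after which Laplace expansion gives $k-1$ directly. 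The only (mild) obstacle is bookkeeping the cyclic indexing consistently; there is no conceptual difficulty.
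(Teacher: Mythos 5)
Your proposal is correct, and it rests on the same first step as the paper: identifying $C_Q$ as the $k\times k$ matrix with all entries $1$ except a single cyclically placed $0$ in each row and column (your $J-P$). Where you diverge is in evaluating the determinant. The paper pulls the zeros onto the diagonal by a cyclic permutation of rows, picking up a sign $(-1)^{k-1}$, and then applies the standard formula $\det\bigl(bI + a(J-I)\bigr) = (b-a)^{k-1}\bigl(b+(k-1)a\bigr)$ with $b=0$, $a=1$, so that $(-1)^{k-1}\cdot(-1)^{k-1}(k-1)=k-1$. You instead diagonalize the circulant via roots of unity, getting $\lambda_0=k-1$ and $\lambda_m=-\omega^{-m}$ for $m\neq 0$, and your sign bookkeeping $\prod_{m=1}^{k-1}(-\omega^{-m})=(-1)^{k-1}\omega^{-k(k-1)/2}=1$ is right (as is your reading of the type~IV relations with $r=0$: exactly the paths of length $k-1$ vanish, and uniqueness of outgoing arrows ensures at most one path per pair, so the matrix is as claimed). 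The trade-off: the paper's argument is entirely elementary and self-contained, and your row-operation fallback is essentially of the same flavor; your circulant route costs a roots-of-unity computation but buys the full spectrum of $C_Q$ in one stroke, which is more than the lemma needs here but connects naturally to the polynomial invariants the paper records elsewhere (e.g.\ the characteristic polynomial $x^n-1$ of the asymmetry for the odd cycle in Remark~\ref{rem:polyIV}, since the asymmetry of a circulant is again circulant). No gaps.
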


\begin{lemma} \label{lemma-det-all_spikes}
Let $Q$ be a quiver of Type IV with parameter sequence $\bigl((1,0,0),
(1,0,0), \dots, (1,0,0)\bigr)$ of length $k \geq 3$, in other words, it
is an oriented cycle of length $k$ with all spikes present. Then $\det
C_Q = 2k-1$.
\end{lemma}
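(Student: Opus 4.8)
The plan is to write the Cartan matrix $C_Q$ in an explicit block form and then evaluate its determinant by elementary block manipulations. First I would record the combinatorics of $Q$: the central cycle is $1 \to 2 \to \dots \to k \to 1$, and to each vertex $j$ is attached a spike vertex $c_j$ with arrows $(j+1) \to c_j$ and $c_j \to j$ (indices taken modulo $k$). By the relations described in Section~\ref{sec:ctaAD}, for each $j$ the paths $j \to (j+1) \to c_j$ and $c_j \to j \to (j+1)$ vanish, the short path $(j+1) \to c_j \to j$ equals the long central path $(j+1) \to \dots \to j$ of length $k-1$, and there are no further zero relations since every position carries a spike. Using that $\gL_Q$ is schurian, so each Cartan entry is $0$ or $1$, I would determine the non-zero paths: from a central vertex $i$ one reaches every central vertex (paths of length $0,\dots,k-1$ along the cycle, the length-$k$ loop being zero) and exactly one spike, namely $c_{i-1}$; from a spike $c_j$ one reaches only the central vertex $j$, the spike $c_j$ itself, and the neighbouring spike $c_{j-1}$ through the path $c_j \to j \to c_{j-1}$. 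Ordering the vertices as $1,\dots,k,c_1,\dots,c_k$, this yields
\[
C_Q = \begin{pmatrix} J & P \\ \mathrm{Id} & \mathrm{Id}+P \end{pmatrix},
\]
where all blocks are $k \times k$, $J$ is the all-ones matrix, $\mathrm{Id}$ the identity, and $P$ the cyclic permutation matrix with $P_{i,l}=1$ if and only if $l \equiv i-1 \pmod k$.

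For the determinant I would apply the block row operation replacing the top block row by itself minus $J$ times the bottom block row; this has determinant $1$ and, using $JP=J$, turns $C_Q$ into $\begin{pmatrix} 0 & P-2J \\ \mathrm{Id} & \mathrm{Id}+P\end{pmatrix}$. The block identity $\det\begin{pmatrix} 0 & Y \\ \mathrm{Id} & Z\end{pmatrix} = (-1)^k \det Y$ then gives $\det C_Q = (-1)^k \det(P-2J)$. Writing $J = \mathbf{1}\mathbf{1}^{T}$ and applying the matrix determinant lemma together with $\det P = (-1)^{k-1}$ and $P^{-1}\mathbf{1}=\mathbf{1}$, I obtain
\[
\det(P-2J) = \det(P)\bigl(1 - 2\,\mathbf{1}^{T}P^{-1}\mathbf{1}\bigr) = (-1)^{k-1}(1-2k),
\]
so that $\det C_Q = (-1)^k(-1)^{k-1}(1-2k) = 2k-1$, as claimed.

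The main obstacle is the second step rather than the linear algebra: one must justify precisely which paths are non-zero, in particular that $c_j \to j \to c_{j-1}$ does not lie in the defining ideal (it contains no zero relation as a subpath and cannot be rewritten through the commutativity relation) while its extension $c_j \to j \to c_{j-1} \to (j-1)$ does vanish (the subpath $j \to c_{j-1} \to (j-1)$ rewrites as the long central path, exposing the zero relation $c_j \to j \to (j+1)$). These verifications rely on the schurian property and on the explicit relations, and they must be carried out carefully to obtain the precise block $\mathrm{Id}+P$ in the lower-right corner; overlooking the reachable spike $c_{j-1}$, for instance, already produces the wrong value at $k=3$, where the correct answer is $\det C_Q = 5$.
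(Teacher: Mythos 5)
Your proof is correct, but it takes a genuinely different route from the paper's. The paper disposes of this lemma in two lines: by Lemma~\ref{l:selfinj} the cluster-tilted algebra of the fully-spiked $k$-cycle is derived equivalent to the one of the spike-free cycle of length $2k$ (via an explicit tilting complex, whose verification is left to the reader there), and since the Cartan determinant is invariant under derived equivalence (Proposition~\ref{prop-asymmetry}), the value $2k-1$ follows from Lemma~\ref{lemma-det-no_spikes}. You instead compute $\det C_Q$ head-on, and your computation checks out: the block form $\left(\begin{smallmatrix} J & P \\ \mathrm{Id} & \mathrm{Id}+P \end{smallmatrix}\right)$ is the correct Cartan matrix, the row operation using $JP=J$, the sign $(-1)^{k}$ from swapping the block rows (valid since $k^2 \equiv k \bmod 2$), and the matrix determinant lemma with $\det P = (-1)^{k-1}$ and $P^{-1}\mathbf{1}=\mathbf{1}$ all combine to give $(-1)^{k}(-1)^{k-1}(1-2k)=2k-1$. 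Crucially, you also identify and settle the one genuinely delicate point, namely that the spike-to-spike paths $c_j \to j \to c_{j-1}$ are nonzero (giving the $+P$ in the lower-right block) while their length-three extensions vanish via the commutativity relation exposing the zero relation $c_j \to j \to (j+1)$; these are exactly the entries that appear, in the consecutive-spike case, in the Cartan matrix displayed in the paper's proof of Lemma~\ref{lemma-det-not_all_spikes}, so your path analysis is consistent with the paper's own bookkeeping, and your $k=3$ sanity check ($\det = 5$ for $D_6$) confirms it. The trade-off: the paper's argument is shorter and conceptually explains the coincidence with the $2k$-cycle determinant, but it leans on the unproven-in-detail tilting complex of Lemma~\ref{l:selfinj} and on derived invariance; your argument is self-contained elementary linear algebra, yields the full Cartan matrix as a by-product, and is closer in spirit to how the paper proves the neighbouring Lemmas~\ref{lemma-det-no_spikes} and~\ref{lemma-det-not_all_spikes} directly.
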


\begin{lemma} \label{lemma-det-not_all_spikes}
Let $Q$ be a quiver of Type IV with oriented cycle of length $k\ge 3$
and not all spikes are present. Let $c(Q)$ be the number of vertices on
the oriented cycle which are part of two (consecutive) spikes. Then
$\det C_Q = k+c(Q)-1$.
\end{lemma}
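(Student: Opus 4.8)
The plan is to compute $\det C_Q$ by peeling off the spikes one at a time, reducing to the oriented cycle treated in Lemma~\ref{lemma-det-no_spikes}. Throughout I use that the algebra is schurian, so each entry of $C_Q$ is $0$ or $1$, equalling $1$ exactly when there is a (necessarily unique) nonzero path between the two vertices. A direct inspection of the relations in Section~\ref{sec:ctaAD} shows that on the cycle vertices $1,\dots,k$ the entry $(C_Q)_{u,v}$ is $1$ for every $v\neq u-1$, while $(C_Q)_{u,u-1}=1$ precisely when $u$ is the target $i_j+1$ of some spike, the length $k-1$ path then factoring through the detour $i_j+1\to c_j\to i_j$. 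For a spike vertex $c_j$ one reads off its row and column from the zero relations $i_j,i_j+1,c_j$ and $c_j,i_j,i_j+1$.

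First I would establish the reduction step: if $Q$ has at least one spike and $Q'$ denotes the type IV quiver obtained by deleting a single spike vertex $c_j$ (equivalently, its triangle), then
\[
\det C_Q=\det C_{Q'}+n_j,
\]
where $n_j\in\{0,1,2\}$ is the number of the two distances $d_{j-1},d_j$ adjacent to the $j$-th spike that equal $1$. This is proved by a Laplace expansion along the row and column of $c_j$, in the spirit of Lemma~\ref{lemma-det-shrinking}. The clean case is an isolated spike, $n_j=0$ (both neighbouring distances are $\geq 2$): then the row of $c_j$ has its only two $1$'s in columns $c_j$ and $i_j$, the column of $c_j$ has its only two $1$'s in rows $c_j$ and $i_j+1$, and subtracting the row of $c_j$ from the row of $i_j+1$ clears the column of $c_j$ up to the diagonal $1$. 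Expanding along that column identifies the remaining minor with $C_{Q'}$: the subtraction precisely turns the former detour entry $(C_Q)_{i_j+1,i_j}=1$ into the value $0$ that is correct once the spike is gone, giving $\det C_Q=\det C_{Q'}$.

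The main obstacle will be the case where the spike is consecutive to one or both of its neighbours ($n_j\geq 1$): now $i_j+1$ is itself a spike source, so the column of $c_j$ acquires further $1$'s coming from detours $c_{j+1}\to i_j+1\to c_j$ through the adjacent spikes, and symmetrically on the other side. I would handle this by performing additional row operations that cancel these extra entries against the rows of the neighbouring spike and cycle vertices, reducing the minor to $C_{Q'}$ up to a correction whose cofactor contributes exactly $n_j$. Carrying out this bookkeeping carefully for $n_j=1$ and $n_j=2$ is the technical heart of the argument, and the all-spikes case of Lemma~\ref{lemma-det-all_spikes} provides a useful consistency check.

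Finally I would assemble the formula by induction on the number $r$ of spikes. Deleting spikes one at a time never creates a new distance equal to $1$, since adjacent distances only merge into strictly larger ones; hence each original distance $d_i=1$ is destroyed — and thereby counted by some $n_j$ — at exactly one deletion step. Summing the reduction identity down to the spikeless oriented cycle, whose determinant is $k-1$ by Lemma~\ref{lemma-det-no_spikes}, yields $\det C_Q=(k-1)+c(Q)$, which is the claimed $k+c(Q)-1$.
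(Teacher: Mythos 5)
Your overall strategy coincides with the paper's own proof: peel off spikes one at a time, with a correction term counting the adjacent distances equal to $1$, and induct down to the bare cycle of Lemma~\ref{lemma-det-no_spikes}. Your global bookkeeping is sound (merged distances are always $\geq 2$, so each original unit distance is destroyed, and counted, at exactly one deletion step), your description of the entries of $C_Q$ is correct, and your treatment of the isolated spike ($n_j=0$) is complete and agrees with the paper's case $t=1$.

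The genuine gap is the reduction identity $\det C_Q=\det C_{Q'}+n_j$ for $n_j\geq 1$, which you assert (``a correction whose cofactor contributes exactly $n_j$'') but do not prove --- and this is precisely the technical bulk of the paper's argument. After the row operation and Laplace expansion at the deleted spike, what remains is not $C_{Q'}$ plus something local: one is left with $C_{Q'}$ except for a single entry equal to $2$ instead of $1$ (at position $(t+1,k)$ in the paper's labelling), and the discrepancy contributes a signed \emph{full} cofactor, $\det C_Q=\det C_{Q'}+(-1)^{t+1+k}\det C_{t+1,k}$. Evaluating $\det C_{t+1,k}$ is not achieved by cancelling entries ``against the rows of the neighbouring spike and cycle vertices'': in the paper it takes global row operations pivoting on the row of vertex $1$, Laplace expansions along all spikeless cycle rows and then along all spiked cycle rows, with sign bookkeeping $(-1)^{k-s(Q)-1}\cdot(-1)^{s(Q)-1}\cdot(-1)^{t+1}=(-1)^{k+t-1}$, before the correction comes out to be exactly $+1$. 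Until you carry out such a computation --- for $n_j=1$, and in your more general scheme also for $n_j=2$ --- the heart of the proof is missing. Note also a simplification you overlook, which is exactly the paper's choice: since not all spikes are present, every maximal group of consecutive spikes has an end spike whose following distance is $\geq 2$, and always deleting such an end-of-group spike keeps $n_j\in\{0,1\}$, so the $n_j=2$ case (and its extra matrix bookkeeping, with additional unit entries in both the row and the column of $c_j$) never needs to be handled at all.
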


\begin{proof}[Proof of Lemma~\protect{\ref{lemma-det-no_spikes}}]
We have
\[
\det C_Q  =  \det \left( \begin{array}{ccccc}
1 & \ldots & \ldots & 1 & 0 \\
0 & 1 & \ldots & \ldots & 1 \\
1 & \ddots & \ddots &  & \vdots \\
\vdots &  & \ddots & \ddots & \vdots \\
1 & \ldots & 1 & 0 & 1
\end{array}
\right) =
(-1)^{k-1} \det \left( \begin{array}{cccc}
0 & 1 & \ldots & 1 \\ 1 & 0 & & \vdots \\ \vdots & & \ddots & 1 \\ 1 & \ldots & 1 & 0
\end{array}
\right) = k-1,
\]
where for the last equality
we have used the following formula whose verification
is a standard exercise in linear algebra: for all $a,b\in\mathbb{R}$ we have
\begin{eqnarray} \label{eq_det_ab}
\det \left( \begin{array}{cccc}
b & a & \ldots & a \\ a & b & & \vdots \\ \vdots & & \ddots & a \\ a & \ldots & a & b
\end{array}
\right) & = & (b-a)^{k-1}(b+(k-1)a).
\end{eqnarray}
\end{proof}

\begin{proof}[Proof of Lemma~\protect{\ref{lemma-det-all_spikes}}]
By Lemma~\ref{l:selfinj}, the cluster-tilted algebra $\gL_Q$ is derived
equivalent to the one corresponding to the cycle of length $2k$. Since
the determinant of the Cartan matrix is invariant under derived
equivalence, the result now follows from
Lemma~\ref{lemma-det-no_spikes}.
\end{proof}

\begin{proof}[Proof of Lemma~\protect{\ref{lemma-det-not_all_spikes}}]

We shall closely look at one group of $t\ge 1$ consecutive spikes in $Q$
and label the vertices as in the following figure

\begin{center}
\includegraphics[scale=0.75]{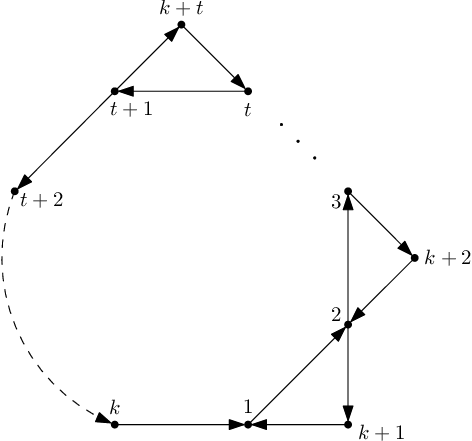}
\end{center}
Then the Cartan matrix has the following shape
$$
C_Q= {\small
\left(
\begin{array}{cccc|cccc||cccc|cccc}
1 & 1 & \ldots & 1 & 1 & \ldots & 1 & 0 & 0 & 0 & \ldots & 0 & 0 & \ldots & \ldots & 0 \\
1 & 1 & \ldots & 1 & 1 & \ldots & \ldots & 1 & 1 & 0 & \ldots & 0 & 0 & \ldots & \ldots & 0 \\
\vdots & \vdots &  & \vdots & \vdots & & & \vdots  & \vdots & \ddots & \ddots &  & \vdots &
 & & \vdots \\
1 & 1 & \ldots & 1 &1 & \ldots & \ldots & 1 & 0 & \ldots & 1 & 0 &0 & \ldots & \ldots & 0\\
\hline
1 & 1 & \ldots & 1 &1 & 1 & \ldots & 1 & 0 & 0 & \ldots & 1 &0 & \ldots & \ldots & 0  \\
\hline
1 & 1 & \ldots & 1 & & & & & 0 & 0 & \ldots & 0 & & \\
1 & 1 & \ldots & 1 & & ? & ? & & 0 & 0 & \ldots & 0 & & ? & ? &  \\
\vdots & \vdots & & \vdots & & ? & ? & & \vdots & \vdots & & \vdots  & & ?& ?  \\
1 & 1 & \ldots & 1 & & & & & 0 & 0 & \ldots & 0  & & &  \\
\hline\hline
1 & 0 & \ldots & 0 & 0 & 0 & \ldots & 0 & 1 & 0 & \ldots & 0 & 0 & 0 & \ldots & 0 \\
0 & 1 &  & \vdots & \vdots & & & \vdots & 1 & \ddots & & & \vdots & & & \vdots \\
\vdots & & \ddots & 0 & \vdots & & & \vdots & 0 &\ddots & \ddots & 0& \vdots & & & \vdots \\
\hline
0 & \ldots & 0 & 1 & 0 &\ldots & \ldots & 0 & 0 & \ldots & 1 & 1 & 0 & 0 &\ldots & 0\\
\hline
0 & \ldots & \ldots & 0 & & & & & 0 & \ldots & \ldots & 0 & & & &  \\
\vdots & & & \vdots & & ? & ? & & \vdots &  &  & \vdots & & ? & ? &  \\
0 & \ldots & \ldots & 0 & & & & & 0 & \ldots & \ldots & 0 & & & &  \\
\end{array}
\right)
}
$$

The two highlighted rows correspond to the vertices $t+1$ and
$k+t$, respectively.
For each vertex $j$ let $r_j$ be the row of $C_Q$ corresponding to $j$.

Note that the column $k+t$ of $C_Q$ has only two non-zero entries, namely in
rows $t+1$ and $k+t$. We first replace row $r_{t+1}$ by $r_{t+1}-r_{k+t}+r_t-r_1$
(in case $t=1$ this indeed means just $r_{t+1}-r_{k+t}$).
Then column $k+t$ has only one non-zero entry, namely on the diagonal; Laplace
expansion along this column yields a new matrix $\tilde{C}$.

We consider the $(t+1)$-st row in this new matrix which has the form
$$\left( \begin{array}{ccccc|ccccc||cccc|cccc}
1 & 1 & \ldots & 1 & 0 &1  & 1 & \ldots & 1 & N & 0 & 0 & \ldots & 0 &0 & \ldots & \ldots & 0  \\
\end{array}
\right)
$$
where the number $N$ at position $(t+1,k)$
is equal to $0$ if $t=1$ and equal to $2$ if $t>1$.

In case $t=1$ we see that $\tilde{C}$ is equal to the Cartan matrix
of the cluster-tilted algebra corresponding to the quiver
$Q\setminus \{k+t\}$. This means that when computing the Cartan determinant
we can remove isolated spikes, i.e. spikes which are not neighboring
any other spike.

In this case $t=1$ the statement of the theorem
follows immediately by induction on the number of spikes
(with the case of no spikes treated earlier as base of the induction).
In fact,
removing the isolated spike does not change the determinant (as we have just
seen), and also
the formula given in the theorem is not affected by removing an isolated
spike.

Let us turn to the more complicated case $t>1$ (which we shall also
show by induction).
If $t>1$, the matrix $\tilde{C}$ is equal to the Cartan matrix of
$Q\setminus \{k+t\}$, except for the $(t+1,k)$-entry which is $2$ in $\tilde{C}$,
but $1$ in $C_{Q\setminus\{k+t\}}$.

To compare the determinants in this case we use the following easy observation. Let
$C=(c_{ij})$ and $\tilde{C}=(\tilde{c}_{ij})$ be two matrices which only
differ at the $(m,n)$-entry. Then
$$\det \tilde{C} - \det C = (-1)^{m+n} (\tilde{c}_{mn}-c_{mn})
C_{mn}$$
where $C_{mn}$ is the matrix obtained from $C$ (or $\tilde{C}$) by removing
row $m$ and column $n$.

Applied to our situation we get
$$\det \tilde{C} -\det C_{Q\setminus\{k+t\}} = (-1)^{t+1+k}(2-1)\det C_{t+1,k}
= (-1)^{t+1+k}\det C_{t+1,k}.
$$
Since $\det \tilde{C}=\det C_Q$ we can rephrase this to get
\begin{eqnarray} \label{det_CQ}
\det C_Q = \det C_{Q\setminus\{k+t\}} + (-1)^{t+1+k}\det C_{t+1,k}.
\end{eqnarray}
By induction on the number of spikes of $Q$ (with the case of no spikes
treated earlier as base of the induction) we can deduce that
$\det C_{Q\setminus\{k+t\}} = k+c(Q)-2$ and hence
$$\det C_Q = k+c(Q)-2 + (-1)^{t+1+k}\det C_{t+1,k}.
$$
For proving the assertion of the theorem we therefore have to show that
$(-1)^{t+1+k}\det C_{t+1,k} = 1.$

We keep the labelling of the rows and columns also for $C_{t+1,k}$
(i.e. there is no row with label $t+1$ or $k+t$, and no column
with label $k$ or $k+t$).

For convenience, the vertices $1,\ldots,k$ on the cycle will be called
cycle vertices and
the remaining vertices will be called outer vertices in the sequel.

Note that in $C_{t+1,k}$ we have $0$'s on the diagonal in all rows
indexed by cycle vertices which have no spike attached. More precisely,
the rows corresponding to cycle vertices are of the form
$$\left( \begin{array}{ccc|ccc||ccccccc|ccc}
1 & \ldots & 1   &1  & \ldots & 1    & 0 & \ldots & 0 & 1 & 0 & \ldots & 0 &
0 & \ldots & 0  \\
\end{array}
\right)
$$
if the vertex has a spike attached, and
$$\left( \begin{array}{cccc|ccc||ccc|ccc}
1 & \ldots & 1   &0 & 1 & \ldots & 1    & 0 & \ldots & 0 & 0 & \ldots & 0 \\
\end{array}
\right)
$$
if there is no spike attached to the vertex.

For each cycle vertex $j\neq 1$ with no spike attached we perform the elementary
row operation replacing $r_j$ by $r_j-r_1$; this gives a unit vector with $-1$
on the diagonal. Laplace expansion along all these rows removes from $C_{t+1,k}$
all rows and columns corresponding to cycle vertices (not equal to vertex $1$)
with no spikes attached;
for the determinant we thus get a sign $(-1)^{k-s(Q)-1}$ where $s(Q)$ is the total
number of spikes of $Q$.

The matrix obtained after this removal process has rows indexed by vertex $1$,
the cycle vertices with spikes attached except vertex $t+1$, and
the outer vertices except vertex $k+t$. Moreover it has the form
$${\small
\left(
\begin{array}{ccccccc|cccc}
1 & 1 & \ldots & \ldots & \ldots & 1 & 1 & 0 & \ldots & \ldots & 0 \\
1 & 1 & \ldots & \ldots & \ldots & 1 & 1 & 1 & 0 & \ldots & 0 \\
\vdots & \vdots &  &  &  & \vdots & \vdots & 0 & 1 & \ddots & \vdots \\
1 & 1 & \ldots & \ldots & \ldots & 1 & 1 & \vdots & \ddots & \ddots  & 0 \\
1 & 1 & \ldots & \ldots & \ldots & 1 & 1 & 0 & \ldots & 0  & 1 \\
\hline
1 & & & & & & & & & & \\
 & \ddots & & & & & & &  & & \\
 &  & 1 & & & & & & ?& ?& \\
 &  & & 0 & 1& & & &   ?& ?& \\
&  &  & & \ddots & \ddots & & & & & \\
&  &  &  &  & 0 & 1& &  & & \\
\end{array}
\right)
}
$$
where in the lower left block the crucial $0$ on the main diagonal occurs
in the column labelled by vertex $t$ (because the row indexed by $k+t$
has been removed).

Now for each cycle vertex $j$ with a spike attached we replace the
row $r_j$ by $r_j-r_1$, giving a unit vector. Consecutive Laplace expansion
along these rows removes all columns corresponding to outer vertices
(and all rows corresponding to cycle vertices with spikes attached).
For each of these Laplace expansions we get a sign $(-1)^{s(Q)+1}$ and
there are $s(Q)-1$ such expansions in total, giving an overall sign
of $(-1)^{s(Q)^2-1} = (-1)^{s(Q)-1}$.

We are left with a $s(Q)\times s(Q)$-matrix of the form
$${\small
\left( \begin{array}{ccccccc}
1 & 1 & \ldots & 1 & 1 & \ldots & 1 \\
1 & 0 & & & & & \\
& \ddots & \ddots & & & & \\
& & 1 & 0 & & & \\
& & & 0 & 1 & & \\
& & & & \ddots & \ddots & \\
 & & & & & 0 & 1
\end{array}
\right)
}
$$
whose determinant is $(-1)^{t+1}$
(use Laplace expansion along column $t$).
Summarizing our arguments we get
$$\det C_{t+1,k} = (-1)^{k-s(Q)-1}\cdot (-1)^{s(Q)-1}\cdot (-1)^{t+1}
= (-1)^{k+t-1}.$$
Substituting this into equation (\ref{det_CQ}) we get for the Cartan
determinant of $Q$ the following
\begin{eqnarray*}
\det C_Q & =& k+c(Q)-2 + (-1)^{t+1+k}\det C_{t+1,k} \\
& = & k+c(Q)-2 + (-1)^{t+1+k}(-1)^{k+t-1} = k+c(Q)-1
\end{eqnarray*}
which is exactly the formula claimed in Theorem \ref{thm-det-typeD}.
\end{proof}

\begin{proof}[Proof of Theorem~\protect{\ref{thm-det-typeD}}]
\begin{enumerate}
\item[{(I)}] Applying part (i) of Lemma \ref{lemma-det-shrinking}
twice gives
$\det C_Q = \det C_{Q\setminus\{a,b\}}$. By definition $Q'=Q\setminus\{a,b\}$
is a quiver of Dynkin type $A$, thus $\det C_{Q'} = 2^{t(Q')}$
by Proposition \ref{prop-det-typeA}.
Clearly, $t(Q)=t(Q')$ for quivers of Type I
and hence
$$ \det C_Q = \det C_{Q\setminus\{a,b\}}=  \det C_{Q'} = 2^{t(Q')}
=2^{t(Q)}.$$

\item[{(II)}] Let $Q$ be a quiver of Type II. By applying
Lemma \ref{lemma-det-shrinking} inductively we can shrink
each of the quivers $Q'$ and $Q''$ (which are of Dynkin type $A$)
to one vertex where for the Cartan determinant of the corresponding
cluster-tilted algebra we get a factor 2 for each triangle we
remove (see part (ii) of Lemma \ref{lemma-det-shrinking}).
Thus we get
\begin{eqnarray} \det C_Q & = & 2^{t(Q')}\cdot 2^{t(Q'')} \cdot \det C_{\tilde{Q}}
\end{eqnarray}
where $\tilde{Q}$ is the quiver with vertices $a,b,c,d$ obtained
after shrinking each of $Q'$ and $Q''$ to one vertex.
Labelling the rows and columns in the order $a,b,c,d$ the cluster-tilted
algebra corresponding to $\tilde{Q}$ has Cartan matrix
$C_{\tilde{Q}} =
{\small
\left(
\begin{array}{cccc}
1 & 0 & 0 & 1 \\ 0 & 1 & 0 & 1 \\
1 & 1 & 1 & 1 \\ 0 & 0 & 1 & 1
\end{array} \right)
}
$
whose determinant is easily computed to be 2.
This gives the desired formula as
$$\det C_Q = 2^{t(Q')}\cdot 2^{t(Q'')} \cdot \det C_{\tilde{Q}}
= 2^{t(Q')+t(Q'')+1} = 2\cdot \det C_{Q'}\det C_{Q''}.
$$

\item[{(III)}] Completely analogous to the previous argument in Type II
we can shrink the subquivers $Q'$ and $Q''$ of any quiver of Type III to
one vertex, ending up with an oriented 4-cycle $\tilde{Q}$.
Labelling the rows and
columns in the order $a,c,b,d$ the cluster-tilted algebra corresponding
to this 4-cycle has Cartan matrix
$C_{\tilde{Q}} =
{\small
\left(
\begin{array}{cccc}
1 & 1 & 1 & 0 \\ 1 & 1 & 0 & 1 \\
0 & 1 & 1 & 1 \\ 1 & 0 & 1 & 1
\end{array} \right)
}
$
which has determinant $3$. As above we then get
$$\det C_Q = 2^{t(Q')}\cdot 2^{t(Q'')} \cdot \det C_{\tilde{Q}}
= 3\cdot 2^{t(Q')+t(Q'')} = 3\cdot \det C_{Q'}\det C_{Q''}.
$$

\item[{(IV)}]
If there are no spikes at all, the result follows from
Lemma~\ref{lemma-det-no_spikes}. Otherwise, by
Lemma~\ref{lemma-det-shrinking} we can again assume that all the rooted
quivers $Q^{(1)}, \ldots, Q^{(r)}$ of type $A$ attached to the spikes
have been shrinked to one vertex, yielding a factor
\[
\prod_{j=1}^r 2^{t(Q^{(j)})} = \prod_{j=1}^r \det C_{Q^{(j)}}
\]
for the Cartan determinant $\det C_Q$. The result then follows from
Lemma~\ref{lemma-det-all_spikes} and
Lemma~\ref{lemma-det-not_all_spikes}.
\end{enumerate}
\end{proof}

\end{document}